\newcounter{propcounter}
\newtheorem{theorem}{Theorem}[section]
\newtheorem{prop}[theorem]{Proposition}
\newtheorem{lemma}[theorem]{Lemma}
\newtheorem{cor}[theorem]{Corollary}
\theoremstyle{definition}
\newtheorem{fac}[theorem]{Fact}
\newtheorem{conj}[theorem]{Conjecture}
\newtheorem{defn}[theorem]{Definition}
\newtheorem{coro}[theorem]{Corollary}
\newtheorem{claim}[theorem]{Claim}
\newenvironment{pr}{{\noindent \it Proof:}}{\hfill $\blacksquare$\par}
\newcommand{\eps}{\varepsilon}
\newcommand{\<}{\subseteq}
\newcommand{\de}{\delta}
\newcommand{\be}{\beta}
\newcommand{\mf}[1]{\mathbf{#1}}
\newcommand{\overf}[1]{\overleftarrow{\mathbf{#1}}}
\title{On powers of Hamilton cycles in Ramsey-Tur\'{a}n Theory}
\author{
	Ming Chen\thanks{School of Mathematics and Statistics, Jiangsu Normal University, Xuzhou, China. Email: {\tt chenming314@gmail.com}.}
	\and
	Jie Han\thanks{School of Mathematics and Statistics and Center for Applied Mathematics, Beijing Institute of Technology, Beijing, China. Email: {\tt han.jie@bit.edu.cn}.}
	\and
    Yantao Tang\thanks{Zhongtai Securities Institute for Financial Studies, Shandong University, Jinan, China. Email: {\tt yttang@mail.sdu.edu.cn}.}
    \and
	Donglei Yang\thanks{Data Science Institute, Shandong University, Jinan, China, Email: {\tt dlyang@sdu.edu.cn}. D.Y. is supported by the China Postdoctoral Science Foundation (2021T140413) and Natural Science Foundation of China (12101365).}
}
\begin{document}
\maketitle
\begin{abstract}

We prove that for $r\in \mathbb{N}$ with $r\geq 2$ and $\mu>0$, there exist $\alpha>0$ and $n_{0}$ such that for every $n\geq n_{0}$, every $n$-vertex graph $G$ with $\delta(G)\geq \left(1-\frac{1}{r}+\mu\right)n$ and $\alpha(G)\leq \alpha n$ contains an $r$-th power of a Hamilton cycle.
We also show that the minimum degree condition is asymptotically sharp for $r=2, 3$ and the $r=2$ case was recently conjectured by Staden and Treglown.

\end{abstract}

\section{Introduction}\label{sec1}
A fundamental topic in graph theory is that of finding conditions under which a graph is Hamiltonian.
The classical result of Dirac \cite{Dirac1952} states that any graph on $n\geq 3$ vertices with minimum degree at least $\frac{n}{2}$ contains a Hamilton cycle.
The \emph{$r$-th power of a graph $H$} is the graph obtained from $H$ by joining every pair of vertices with distance at most $r$ in $H$.
As a natural generalization of Dirac's theorem, the P\'{o}sa--Seymour Conjecture received much attention, which predicts that an $n$-vertex graph $G$ satisfying $\delta(G)\geq \frac{r}{r+1}n$ contains an $r$-th power of a Hamilton cycle.
There had been many excellent results (see e.g. \cite{FGR1994},\cite{FK1995},\cite{HR1979}) until it was finally settled by Koml\'{o}s, S\'{a}rk\"{o}zy and Szemer\'{e}di \cite{KSGS1998} in 1997.

Note that the minimum degree condition is sharp as seen by near-balanced complete $(r+1)$-partite graphs.
As for many other problems in the area, this extremal example has the characteristic that it contains a large independence set.
There has thus been significant interest in seeking variants of classical results in extremal graph theory, where one now forbids the host graph from containing a large independent set.
Indeed, nearly 50 years ago, Erd\H{o}s, Hajnal, S\'{o}s and Szemer\'{e}di \cite{EHSS1983} initiated the study of the Tur\'{a}n problem under the additional assumption of small independence number.
Formally, given a graph $H$ and natural numbers $m, n\in \mathbb{N}$, the \emph{Ramsey--Tur\'{a}n number} $\textbf{RT}(n, H, m)$ is the maximum number of edges in an $n$-vertex $H$-free graph $G$ with $\alpha(G)\leq m$ and the \emph{Ramsey--Tur\'an density} of $K_r$ is defined as $\varrho(K_r):=\lim\limits_{\alpha\to0}\lim\limits_{n\to\infty}\frac{\textbf{RT}(n, K_r, \alpha n)}{\binom{n}{2}}$.
Erd\H{o}s--S\'{o}s \cite{EST1979} and Erd\H{o}s--Hajnal--S\'{o}s--Szemer\'{e}di \cite{EHSS1983} proved that
\begin{equation}\label{eq:rhoK}
\varrho(K_r) = \begin{cases}
1-\frac{2}{r-1}  \hfill\text{ if $r$ is odd,}\\
1-\frac{6}{3r-4} \hfill\text{ otherwise.}
\end{cases}
\end{equation}
For more literature on the Ramsey--Tur\'{a}n theory, we refer the readers to a comprehensive survey of Simonovits and S\'{o}s \cite{simsos2001}.

More recently, there has been interest in similar questions but where now one seeks for a spanning subgraph in an $n$-vertex graph with independence number $o(n)$ and large minimum degree.
Recent results mostly focus on clique-factors (and in general, $F$-factors).
In particular, Knierim and Su~\cite{MR4193066} showed that for fixed $\mu>0$, sufficiently small $\alpha>0$ and sufficiently large $n\in r\mathbb N$, an $n$-vertex graph $G$ with $\delta(G)\geq \left(1-\frac{2}{r}+\mu\right)n$ and $\alpha(G)\leq \alpha n$ contains a $K_{r}$-factor.
The $r=3$ case was obtained earlier by Balogh, Molla and Sharifzadeh~\cite{MR3570984}, who initiated this line of research.
Nenadov and Pehova~\cite{MR4080942} also proposed similar problems for sublinear $\ell$-independence number for $\ell\ge 2$.
%
%
For more problems and results on this topic, we refer to \cite{MR3570984,CHKWY,HHYW,HMWY2021,MR4193066,MR4080942,MR4170632}.


Moving attention to \emph{connected} spanning subgraphs (in contrast to $F$-factors), one quickly observes that we cannot reduce the minimum degree condition in Dirac's theorem (``$n/2$'') significantly, as seen by the graph formed by two disjoint cliques of almost equal size.
For a next step, Staden and Treglown \cite{MR4170632} conjectured that this minimum degree condition essentially guarantees a square of Hamilton cycle.

\begin{conj}[\cite{MR4170632}]\label{conj3}
For every $\mu>0$, there exist $\alpha>0$ and $n_{0}\in \mathbb{N}$ such that the following holds.
For every $n$-vertex graph $G$ with $n\geq n_{0}$, if $\delta(G)\geq \left(\frac{1}{2}+\mu\right)n$ and $\alpha(G)\leq \alpha n$, then $G$ contains a square of a Hamilton cycle.
\end{conj}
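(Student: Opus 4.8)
The plan is to prove Conjecture~\ref{conj3} by the absorption method combined with Szemer\'edi's regularity lemma; throughout fix $\alpha\ll\mu$. The square of a Hamilton cycle will be assembled from three ingredients: a short \emph{absorbing square path} $P_{\mathrm{abs}}$ on a small reservoir $A\subseteq V(G)$, arranged so that for every sufficiently small $W\subseteq V(G)\setminus A$ there is a square path on $A\cup W$ with the same pair of end-edges as $P_{\mathrm{abs}}$; an \emph{almost-spanning square path} $P_0$ on $V(G)\setminus A$ missing only $o(n)$ vertices; and a bounded number of short \emph{connectors} that splice $P_0$ and $P_{\mathrm{abs}}$ into one square cycle, after which the $o(n)$ uncovered vertices are swallowed by $P_{\mathrm{abs}}$. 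Everything rests on one elementary local fact: since $\delta(G)\ge(\tfrac12+\mu)n$, any two vertices have at least $2\mu n$ common neighbours, so for adjacent $u,v$ and any set $U$ with $|U|\le\mu n$ the set $N(u)\cap N(v)\setminus U$ has more than $\alpha n$ vertices and hence contains an edge; thus a square path ending in the edge $uv$ can be extended by two further vertices lying in that common neighbourhood. Iterating this ``extension step'' yields a connecting lemma: prescribed end-edges lying in suitably controlled positions can be joined by a square path of bounded length avoiding any given $o(n)$-set.

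Next I would show each vertex $v$ lies in many absorbing gadgets. The graph $G[N(v)]$ has at least $(\tfrac12+\mu)n$ vertices and independence number at most $\alpha n$, and it contains a diamond $K_4^-$: otherwise every neighbourhood inside $G[N(v)]$ would induce a matching, forcing $\Delta(G[N(v)])\le 2\alpha n$, so each $w\in N(v)$ would send at least $(\tfrac12+\mu-2\alpha)n$ edges outside $N(v)$, contradicting $|V(G)\setminus N(v)|\le(\tfrac12-\mu)n$. Writing the diamond as a square path $w_1w_2w_3w_4$ (with $w_1w_4$ the missing edge), the fact that $v\sim w_i$ for all $i$ makes $w_1w_2vw_3w_4$ a square path as well; so any long square path with the segment $w_1w_2w_3w_4$ built into it can optionally absorb $v$. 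Running this argument while avoiding previously used vertices produces $\Omega(n)$ vertex-disjoint such gadgets per vertex, which is exactly the input required by a distributive-absorption construction: link enough gadgets, via the connecting lemma, into one short square path $P_{\mathrm{abs}}$ on a randomly chosen reservoir $A$ so that every $o(n)$-subset of the remaining vertices can be absorbed.

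The core of the proof is the almost-spanning square path $P_0$. Here I would apply the regularity lemma to $G-A$ to obtain, after the usual cleaning, a reduced graph $R$ on $k$ clusters with $\delta(R)\ge(\tfrac12+\tfrac\mu2)k$ whose relevant pairs are super-regular of density at least $d=d(\mu)$, using $\alpha(G)\le\alpha n$ to ensure every cluster induces a graph of independence number $o(m)$, $m$ the cluster size. The favourable case is a triangle $V_aV_bV_c$ in $R$ with all three pairs super-regular: a square path that cyclically snakes $V_a,V_b,V_c,V_a,\dots$ uses only edges \emph{between} clusters, since both its consecutive pairs and its distance-two pairs alternate among the three clusters; as the square of a path is $3$-chromatic, the blow-up lemma embeds such a square path covering all but $O(\eps m)$ vertices of $V_a\cup V_b\cup V_c$. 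The obstruction is that $\delta(R)\ge(\tfrac12+\tfrac\mu2)k$ does \emph{not} force a near-perfect triangle tiling of $R$ (a clique plus an independent set, joined completely, is a counterexample). I would therefore split $V(G)$, along the regularity partition, into a ``dense'' part $Y$, covered as far as the triangle-snaking idea reaches, and a ``sparse'' part $X$: here $\alpha(G)\le\alpha n$ forbids $X$ from being anywhere near triangle-free and, together with $\delta(G)\ge(\tfrac12+\mu)n$, forces every vertex of $X$ to be adjacent to all but a $\mu$-small fraction of $Y$. One then threads $X$ by a square path following the repeated pattern $y\,y\,x\,x$, where the $yy$-blocks form a near-perfect matching of $G[Y]$, the $xx$-blocks a near-perfect matching of $G[X]$, and all other adjacencies it needs are of the form ``a vertex of $X$ to a vertex of $Y$'', which are available. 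That $|Y|>|X|$ --- precisely the surplus provided by $\delta(G)\ge(\tfrac12+\mu)n$ rather than $\tfrac n2$ --- leaves slack for occasional $2$-to-$1$-type blocks absorbing the extra $Y$-vertices, and a (possibly iterated) version of the same dichotomy, or a global structural argument, handles the residual dense region. Verifying that the dense and sparse parts have compatible sizes and that the two kinds of square paths interleave into one almost-spanning square path is the delicate bookkeeping.

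Finally, having arranged the end-edges of $P_{\mathrm{abs}}$ and of $P_0$ to lie in a common super-regular pair --- inside which common neighbourhoods of bounded tuples of vertices are still of linear size, so the connecting lemma applies cleanly --- I delete $V(P_{\mathrm{abs}})\cup V(P_0)$, leaving an $o(n)$-set $W$; I join an end-edge of $P_0$ to one of $P_{\mathrm{abs}}$ and the other two ends to each other, obtaining a square cycle on $V(G)\setminus W'$, for a slightly enlarged $o(n)$-set $W'$, with the end-edges of $P_{\mathrm{abs}}$ intact, and then invoke the absorbing property of $P_{\mathrm{abs}}$ to reroute through $W'$, yielding a square of a Hamilton cycle. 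I expect the almost-spanning step to be the main obstacle: a square path is a chain of edge-sharing triangles, so triangles must be manufactured everywhere, yet $G$ is guaranteed dense only ``on average'', and the sparse patches --- whose structure is pinned down only indirectly, through the independence number and the global minimum degree --- must still be threaded without breaking the triangle chain or stranding vertices.
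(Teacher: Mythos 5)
Your proposal follows the same absorption framework as the paper (reservoir, absorbers, almost-cover, connections), and your diamond-counting argument for finding absorbers in $N(v)$ is a clean and correct elementary alternative for $r=2$ to the paper's route through $\textbf{RT}(n,P^2_4,o(n))$ and the modified arboricity $AR(P^2_4)=3$. However, there are two genuine gaps, and one of them is fatal to the proof as sketched.

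The first and most serious gap is the Connecting Lemma. Your ``elementary local fact'' correctly shows that a square path ending in an edge $uv$ can be extended forward, because $N(u)\cap N(v)\setminus U$ still exceeds $\alpha n$ and so contains an edge. But this is a one-sided extension with no steering: after extending, the new end-edge lies somewhere in $N(u)\cap N(v)$, over whose location you have no control, and nothing in the argument explains how iterated extension can be made to terminate in a \emph{prescribed} edge $v_1v_2$. The escape you gesture at --- ``having arranged the end-edges of $P_{\mathrm{abs}}$ and of $P_0$ to lie in a common super-regular pair'' --- does not help as stated: a super-regular pair is bipartite, and a square path cannot live inside a bipartite graph, since consecutive and distance-two pairs must simultaneously be edges. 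In the paper the connection is the hardest part of the whole argument (Lemma~\ref{connect} is explicitly the only place where the full minimum-degree hypothesis is needed): one first finds, in the reduced graph, a \emph{good walk} through a chain of $K_{r+1}$'s in which consecutive copies share $r-1$ vertices, and then carries out a delicate cluster-by-cluster embedding (Lemmas~\ref{lem6.30} and~\ref{lem6.31}), using the sublinear independence number to step across the ``lazy'' positions. None of this difficulty is acknowledged or addressed by your extension step.

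The second gap is the almost-spanning step, which you yourself identify as the main obstacle and leave unresolved: the dense/sparse split of $V(G)$, the claim that every sparse vertex is adjacent to all but a $\mu$-fraction of the dense part, the $yy\,xx$ threading, the $2$-to-$1$ blocks, and the ``(possibly iterated)\dots global structural argument'' for the residual region are all aspirational. You are also solving a harder problem than necessary: you insist on one almost-spanning square path $P_0$, whereas the paper's Almost Path-cover Lemma (Lemma~\ref{almost}) only asks for a \emph{family} of vertex-disjoint square paths of a fixed constant length $\ell$ covering $(1-\delta)n$ vertices, i.e.\ an almost-perfect $P^2_\ell$-tiling. This reduces the step to an $H$-tiling problem in a graph with $\alpha(G)=o(n)$, handled by a black-box result of Chen--Han--Wang--Yang (Lemma~\ref{lem4.1}); because that tiling result works directly in $G$ (using the small independence number of $G$), the obstruction you raise about the reduced graph $R$ lacking a triangle factor simply does not arise. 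The paths in the family are then linked into a Hamilton square cycle by repeated use of the Connecting Lemma together with the absorbing structure. Your proposal thus requires the hardest possible version of the almost-spanning step while omitting the hardest lemma altogether.
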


\subsection{Main results and Lower bound constructions}
Our main result is the following, which in particular resolves Conjecture \ref{conj3}. 

\begin{theorem}[]\label{thm2}
Given $\mu>0$ and $r\in \mathbb{N}$ with $r\geq 2$, there exists $\alpha>0$ such that the following holds for sufficiently large $n$.
Let $G$ be an $n$-vertex graph with $\delta(G)\geq \left(1-\frac{1}{r}+\mu\right)n$ and $\alpha(G)\leq \alpha n$.
Then $G$ contains an $r$-th power of a Hamilton cycle.
\end{theorem}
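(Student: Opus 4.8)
The plan is to use the absorption method combined with a regularity-based connecting argument. First I would set up an absorbing structure: I want to find a short "absorbing path" $P_{\mathrm{abs}}$ which is itself an $r$-th power of a path, such that for every sufficiently small vertex set $S$ of the "right" divisibility, $P_{\mathrm{abs}}$ together with $S$ can be reorganized into an $r$-th power of a path on exactly $V(P_{\mathrm{abs}})\cup S$ with the same endpoints (endpoint meaning the first and last $r$ vertices). The standard way to build this is to show that for each vertex $v$ (or more robustly, each ordered $r$-tuple, to handle powers), there are linearly many "absorbing gadgets" — short $r$-th powers of paths that can swallow $v$ — and then take a random subfamily; the key input here is that the minimum degree condition $\delta(G)\ge(1-1/r+\mu)n$ forces every set of $r$ vertices to have a common neighbourhood of size at least $\mu n$, which is exactly what lets one greedily extend $r$-th powers of paths locally. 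The small independence number $\alpha(G)\le\alpha n$ will be needed to guarantee that these common neighbourhoods are not merely large but internally rich enough to close up into powers of paths of bounded length.

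Second, I would handle the global structure via the regularity lemma. Apply the Szemer\'edi regularity lemma to get a reduced graph $R$ on the cluster set; the minimum degree of $G$ passes (up to $o(1)$) to a minimum degree of essentially $(1-1/r+\mu/2)|R|$ on $R$, and crucially $\alpha(R)$ stays bounded (a large independent set in $R$ would pull back to a large independent set in $G$ using that dense regular pairs contain large "sub-independent-free" structure — more precisely one uses that $R$ itself has small independence number after cleaning). The real engine is then a result guaranteeing that $R$, with this minimum degree and small independence number, contains a spanning structure — here I would aim to cover almost all clusters by a small number of "$K_{r+1}$-like" connected pieces, or directly find that $R$ has an $r$-th power of a Hamilton path/cycle on the cluster level. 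Within each regular pair / tuple of clusters forming a clique in $R$, the Blow-up Lemma (or a direct greedy embedding exploiting super-regularity) lets us embed a long $r$-th power of a path spanning those clusters.

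Third, the connecting step: I would show that any two "ends" of $r$-th powers of paths can be joined by a short $r$-th power of a path. This again uses that common neighbourhoods of $r$-sets have size $\ge\mu n$, so one can walk from any $r$-tuple to any other in a bounded number of steps; the independence number bound ensures the intermediate common-neighbourhood sets induce subgraphs that are themselves far from empty, so the required connecting $r$-th powers of paths genuinely exist. Assembling: reserve the absorbing path and a small reservoir set, cover almost everything by the regularity-based long powers of paths, connect them cyclically using the connecting lemma, absorb the leftover vertices (a set of size $o(n)$, made divisibility-correct using the reservoir) into $P_{\mathrm{abs}}$, and close the cycle.

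The main obstacle I anticipate is the absorption step for powers rather than for single vertices or matchings: absorbing a single vertex $v$ into an $r$-th power of a path requires locally rebuilding $r$ consecutive "rungs" of the power around $v$, so the absorbing gadgets are genuinely more delicate than in the $F$-factor setting, and one must verify that $\delta(G)\ge(1-1/r+\mu)n$ together with $\alpha(G)\le\alpha n$ supplies enough of them — this is where the interplay between the degree condition (which alone is the Koml\'os--S\'ark\"ozy--Szemer\'edi threshold shifted down by $1/(r(r+1))$) and the sublinear independence number must be exploited, presumably by finding within relevant neighbourhoods a small dense piece whose lack of an independent set of size $2$ — i.e.\ near-completeness on a bounded set — yields the needed $r$-th power of a short path. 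A secondary difficulty is ensuring the reduced graph $R$ inherits a usable small independence number and the correct minimum degree simultaneously, and that the covering of $R$ by long powers of paths leaves only $o(n)$ vertices uncovered with controllable divisibility.
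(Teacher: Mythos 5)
Your high-level architecture (absorbing path via vertex absorbers, cover almost everything via regularity, connect via short powers of paths, close the cycle) matches the paper's, but there is a genuine gap in the middle of your argument, and it is exactly where the difficulty of the theorem lives.

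The first issue is the claim that ``$\alpha(R)$ stays bounded'' and that a large independent set in the reduced graph $R$ would pull back to a large independent set in $G$. This is false in general: two clusters $V_i,V_j$ can be non-adjacent in $R$ because the bipartite density between them fell below $\beta$ after cleaning, even though each $V_i$ internally is very dense. An $R$-independent set of clusters can thus coexist with $\alpha(G)=2$. The paper never asserts or uses any bound on $\alpha(R)$; it only uses $\delta(R)\geq(1-1/r+\mu/2)|R|$ (Fact~\ref{fact2.8}). The sublinear independence number of $G$ is used entirely at the \emph{embedding} stage, inside individual clusters, to find an edge in a large subset of a cluster (see the last lines of the proof of Lemma~\ref{lem6.30}: $|T^1_{r+1}|>\alpha|V(G)|$ forces an edge there).

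The second, more serious issue is that your plan to ``directly find that $R$ has an $r$-th power of a Hamilton path/cycle on the cluster level'' cannot work under the given hypotheses. To lift an $r$-th power of a path from $R$ to $G$ by a blow-up/greedy embedding you need consecutive $(r+1)$-tuples of clusters to form $K_{r+1}$ in $R$, which in turn requires $\delta(R)\geq \frac{r}{r+1}|R|$; but we only get $\delta(R)\geq\left(1-\frac1r+\frac\mu2\right)|R|$, and $1-\frac1r=\frac{r-1}{r}<\frac{r}{r+1}$. This is precisely why the theorem is below the P\'osa--Seymour threshold, and it is the central obstruction. The paper's workaround is the notion of a \emph{good walk} (Definition~\ref{def6.0}): a sequence of clusters where each consecutive $(r+1)$-tuple induces either $K_{r+1}$ or only $K_r$ with one repeated cluster (a ``lazy element''). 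Lazy elements are embeddable only because $\alpha(G)=o(n)$ guarantees an \emph{edge inside a cluster}, supplying the missing vertex; this is what converts the weaker degree threshold into a usable structure. Since these lazy positions must be sparse along the walk (spaced by at least $20(r+1)$ to keep the local embedding feasible), the whole connecting machinery (Lemmas~\ref{lem6.2},~\ref{lem6.30},~\ref{lem6.31}) is built around producing and embedding such walks. Without this idea — or something equivalent that explicitly spends the $\alpha(G)=o(n)$ hypothesis to buy back the deficit $\frac{r}{r+1}-\left(1-\frac1r\right)=\frac{1}{r(r+1)}$ in the reduced-graph degree — your covering and connecting steps do not go through.
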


The minimum degree condition in Conjecture \ref{conj3} is asymptotically best possible, e.g. by considering a union of two disjoint cliques of almost equal size.
Moreover, we cannot expect a result where $\mu$ does not depend on $\alpha$ (at least for r=2), as seen by the following proposition.
\begin{prop}[]\label{prop1.2}
Given $\alpha>0$ and $n\in \mathbb{N}$, there exists an $n$-vertex graph $G$ with $\delta(G)\geq (\frac{1}{2}+\frac{\alpha}{2})n-1$ and $\alpha(G)\leq \alpha n$ such that $G$ has no square of a Hamilton cycle.
\end{prop}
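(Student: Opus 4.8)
The plan is to produce an explicit graph $G$ together with a short structural reason why it cannot contain the square of a Hamilton cycle. The reason I will use is the following \emph{obstruction}: if a graph $H$ contains the square of a Hamilton cycle, then $H-S$ is connected — in fact $2$-connected, provided $|V(H)\setminus S|\ge 3$ — for \emph{every} independent set $S$ of $H$. To see this it suffices to treat $H=C_n^2$, the square of the cycle $v_1v_2\cdots v_nv_1$, since deleting $S$ from a spanning subgraph of $H$ only discards edges. If $S$ is independent in $C_n^2$, then no two of its vertices lie at cyclic distance at most $2$; hence the vertices lying cyclically between two consecutive members of $S$ form an arc of length at least $2$, and — crucially — the last vertex of one arc and the first vertex of the next are at cyclic distance exactly $2$ (they flank the single deleted vertex separating the two arcs), so they are adjacent in $C_n^2$. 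Concatenating the arcs along these links exhibits a spanning cycle of $C_n^2-S$, which is therefore $2$-connected. Contrapositively: \emph{if $G$ has an independent set $I$ with $G-I$ disconnected (or merely with a cut vertex), then $G$ has no square of a Hamilton cycle.}

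Next I build such a $G$. Set $t:=\lfloor\alpha n\rfloor$ and partition the vertex set as $I\cup A\cup B$ with $|I|=t$ and $|A|,|B|$ as equal as possible. Take $A$ and $B$ to be cliques, $I$ an independent set, put all edges between $I$ and $A\cup B$, and no edges between $A$ and $B$; so $A$ and $B$ can communicate only through $I$. Then every independent set of $G$ is either contained in $I$ or has size at most $2$ (a vertex of $A$ is adjacent to all of $A\cup I$, and likewise for $B$), whence $\alpha(G)=\max\{t,2\}=t\le\alpha n$. A vertex of $A$ (resp.\ $B$) has degree $|A|-1+t$ (resp.\ $|B|-1+t$) and a vertex of $I$ has degree $|A|+|B|=n-t\ge(1-\alpha)n$, so $\delta(G)=\min\{|A|,|B|\}-1+t=\lfloor(n+t)/2\rfloor-1\ge(\tfrac12+\tfrac\alpha2)n-1$ up to an additive constant coming from the floors; this constant can be shaved by adding one universal vertex $w$, joined to everything and placed so that $w$ becomes a cut vertex of $G-I$, and then invoking the $2$-connected form of the obstruction. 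Finally $G-I$ is the disjoint union of the cliques on $A$ and $B$, hence disconnected, so by the obstruction $G$ contains no square of a Hamilton cycle. This is exactly the claimed counterexample.

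The one genuinely substantive step is the obstruction itself — the statement that an \emph{independent} set can never disconnect $C_n^2$ — and its proof rests entirely on the observation that a lone deleted vertex gets ``jumped over'' by a distance-$2$ edge of $C_n^2$. Everything else (the degree count, the independence number, and the non-existence of a square Hamilton cycle given the obstruction) is a direct verification, the only delicate point being the routine rounding needed to land on the precise constant ``$-1$'', which is why one may want the extra universal vertex.
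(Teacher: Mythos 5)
Your construction is essentially the same as the paper's (rename $V_1=A$, $V_2=I$, $V_3=B$: two cliques joined through a middle independent set, with no direct $A$--$B$ edges), but your argument for why no square of a Hamilton cycle survives is a genuinely different one. The paper takes a hypothetical square Hamilton cycle $C$, picks $u\in V_1$, $v\in V_3$ minimizing the length of the $2$-path $C_{u,v}$ between them along $C$, observes that all internal vertices of that path must then lie in $V_2$ and that there are at least two of them, and derives a contradiction because consecutive internal vertices of a $2$-path are adjacent while $V_2$ is independent. You instead isolate a clean, reusable obstruction: deleting any independent set $S$ from $C_n^2$ leaves a Hamiltonian (hence $2$-connected) graph, because the arcs between consecutive members of $S$ have length $\ge 2$ and are re-linked by distance-$2$ chords across each deleted vertex; since $G-I$ is disconnected, $G$ has no square Hamilton cycle. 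Your route packages the key fact as a standalone lemma and makes the contradiction immediate; the paper's shortest-path argument is the one that generalizes smoothly to the $r$-th power in Proposition~\ref{prop1.3}, where the middle part is merely $K_r$-free rather than independent and a ``deletion keeps it connected'' statement is no longer the natural invariant. Two small points to tighten in your write-up: the universal-vertex patch for the rounding is only sketched, so you should actually recompute $\delta(G)$ after adding $w$ and check it clears $(\tfrac12+\tfrac\alpha2)n-1$ rather than assert it; and the claim $\alpha(G)=\max\{t,2\}$ tacitly assumes $\lfloor\alpha n\rfloor\ge 2$, an edge case the paper also leaves implicit.
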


%
%
For general lower bound on the minimum degree condition, we prove the following proposition by the \emph{connecting barrier}, which matches the minimum degree condition in Theorem~\ref{thm2} for the case $r=3$.

\begin{prop}[]\label{prop1.3}
Given $\mu, \alpha>0$ and $r\in \mathbb{N}$ with $r\geq 3$, the following holds for sufficiently large $n$. There exists an $n$-vertex graph $G$ with
$\delta(G)\geq \frac{2-\varrho(K_r)}{3-2\varrho(K_r)}n-\mu n$ and $\alpha(G)\leq \alpha n$ such that $G$ has no $r$-th power of a Hamilton cycle.
\end{prop}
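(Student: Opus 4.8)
The plan is to build $G$ as a \emph{connecting barrier}: two disjoint cliques $A,B$ glued to a common set $S$, where $S$ is a $K_r$-free graph with sublinear independence number and near-extremal edge density (a Ramsey--Tur\'an extremal graph for $K_r$). Concretely, fix an auxiliary constant $\mu'\ll\mu$, write $\varrho:=\varrho(K_r)$, and take $S$ to be a $K_r$-free graph on $s:=\lfloor n/(3-2\varrho)\rfloor$ vertices with $\alpha(S)\le\mu' s$ and $\delta(S)\ge(\varrho-\mu')s$; such a graph exists because the extremal constructions witnessing \eqref{eq:rhoK} (Erd\H{o}s--S\'os \cite{EST1979}, Erd\H{o}s--Hajnal--S\'os--Szemer\'edi \cite{EHSS1983}) are essentially regular, and a short vertex-deletion argument upgrades ``many edges'' to ``large minimum degree''. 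Then let $A,B$ be vertex-disjoint cliques, disjoint from $S$, with $|A|=\lfloor(n-s)/2\rfloor$ and $|B|=n-s-|A|$, and let $G$ be their disjoint union together with all edges between $S$ and $A$ and all edges between $S$ and $B$, but no $A$--$B$ edges. The hypothesis $r\ge 3$ is used precisely here: for $r=2$ a $K_2$-free $S$ is edgeless, so $\alpha(G)$ could not be kept small.

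Next I would check the two easy hypotheses. Since $A$ and $B$ are cliques and any independent set of $G$ meets $A\cup B$ in at most two vertices and $S$ in an independent subset, $\alpha(G)\le 2+\alpha(S)\le 2+\mu' s\le\alpha n$ for $n$ large. For the degree bound, a vertex of $A$ (the smaller of $A,B$) has degree $(|A|-1)+s\ge\tfrac{n+s}{2}-O(1)$, and substituting $s=n/(3-2\varrho)$ gives $\tfrac{n+s}{2}=\tfrac{2-\varrho}{3-2\varrho}\,n$; a vertex of $S$ has degree $|A|+|B|+\delta(S)\ge(n-s)+(\varrho-\mu')s=n-(1-\varrho+\mu')s$, and the same substitution gives $n-(1-\varrho)s=\tfrac{2-\varrho}{3-2\varrho}\,n$, with the $\mu' s$ term absorbed into $\mu n$. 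Thus $\delta(G)\ge\tfrac{2-\varrho}{3-2\varrho}\,n-\mu n$; the choice of $s$ was exactly the one balancing these two expressions.

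The remaining, and main, step is to show $G$ contains no $r$-th power of a Hamilton cycle. Suppose $C=v_1\cdots v_n$ is a Hamilton cycle with $C^{(r)}\subseteq G$. Since $A$ and $B$ are nonempty and mutually nonadjacent, $G-S$ has at least two components, hence so does $C^{(r)}-S$. Write the non-$S$ vertices of $C$ as maximal arcs separated by maximal runs of $S$-vertices. Two consecutive arcs separated by fewer than $r$ vertices of $S$ lie in the same component of $C^{(r)}-S$, since their extremal endpoints are within cyclic $C$-distance $r$ and hence adjacent in $C^{(r)}$; on the other hand no edge of $C^{(r)}$ can span an $S$-run of length at least $r$. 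Therefore the number of components of $C^{(r)}-S$ equals the number of maximal $S$-runs of length at least $r$ (once this number is at least $1$), so some such run exists, say occupying positions $t,t+1,\dots,t+r-1$. These $r$ vertices lie in $S$ and are pairwise at cyclic $C$-distance at most $r-1$, hence pairwise adjacent in $C^{(r)}\subseteq G$, so they form a $K_r$ inside $S$ — contradicting that $S$ is $K_r$-free.

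I expect the only genuinely delicate point to be stating and justifying cleanly that a near-extremal $K_r$-free graph with sublinear independence number can additionally be taken with minimum degree at least $(\varrho(K_r)-\mu')s$ — a routine ``minimum-degree version'' of the extremal bound, obtainable by iteratively deleting vertices of small degree (or by noting the known constructions are vertex-transitive); everything else is a short computation or the structural observation about $r$-th powers of cycles used above. It is worth remarking that the resulting bound matches the minimum degree hypothesis of Theorem~\ref{thm2} exactly when $r=3$: then $\varrho(K_3)=0$, so $\tfrac{2-\varrho(K_3)}{3-2\varrho(K_3)}=\tfrac23=1-\tfrac1r$, and $S$ is merely required to be triangle-free with sublinear independence number.
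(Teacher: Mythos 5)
Your construction is the same as the paper's (two cliques glued to a common $K_r$-free ``gadget'' set with near-extremal Ramsey--Tur\'an parameters, with the sizes balanced exactly as you balance them), and your non-existence argument, though phrased via counting the components of $C^{(r)}-S$ rather than via the paper's ``shortest arc between $V_1$ and $V_3$ in the cycle'' minimality argument, reduces to the same structural observation: any Hamilton cycle whose $r$-th power lives in $G$ must traverse $\ge r$ consecutive vertices of $S$, which then span a $K_r$ in $S$. The only place where you are looser than the paper is the existence of $S$: the paper simply invokes a degree-form Ramsey--Tur\'an result (Proposition~1.5 of \cite{CHKWY}, stated as Corollary~\ref{cor1}) giving a $K_r$-free graph with $\delta \ge (\varrho(K_r)-\mu)n$ and $\alpha = o(n)$ directly, whereas your sketch of ``delete low-degree vertices'' does not obviously terminate on a linear-sized graph and would need either that citation or the observation that the explicit extremal constructions are essentially regular, as you mention.
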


The proofs of Proposition \ref{prop1.2} and Proposition \ref{prop1.3} are given in Subsection \ref{sec2}.
Note that $\varrho(K_3)=0$ and thus Proposition~\ref{prop1.3} complements Theorem~\ref{thm2} on the minimum degree condition for $r=3$.
For general $r$, note that by~\eqref{eq:rhoK}, we have
\[
\frac{2-\varrho(K_r)}{3-2\varrho(K_r)} = 1 - \frac{2}{r + c} > 1- \frac2{r+1},
\]
where $c=3$ if $r$ is odd and $c=8/3$ otherwise.
By the aforementioned result of Knierim and Su~\cite{MR4193066}, this shows a clear separation between the minimum degree thresholds for the $K_{r+1}$-factor problem and the $r$-th power of a Hamilton cycle problem in host graphs with sublinear independence number.
This is in contrast to the problems in general host graphs, where the two problems share the same minimum degree threshold.



We suspect that the ``connecting barrier'' in Proposition \ref{prop1.3} indeed discloses the best possible minimum degree condition forcing an $r$-th power of a Hamilton cycle for every $r\geq 2$ and propose the following conjecture.

\begin{conj}\label{conj4}
Given $\mu>0$ and $r\geq 4$, there exists $\alpha>0$ such that the following holds for sufficiently large $n$.
Let $G$ be an $n$-vertex graph with $\delta(G)\geq \frac{2-\varrho(K_r)}{3-2\varrho(K_r)}n+\mu n$ and $\alpha(G)\leq \alpha n$.
Then $G$ contains an $r$-th power of a Hamilton cycle.
\end{conj}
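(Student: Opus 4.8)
The plan is to run the absorption method for spanning subgraphs, pushing all the difficulty into a single \emph{connecting lemma} whose threshold is exactly $d:=\frac{2-\varrho(K_r)}{3-2\varrho(K_r)}$. Write $\varrho:=\varrho(K_r)$ and $\beta:=1-d=\frac{1-\varrho}{3-2\varrho}$, so that $\delta(G)\ge(d+\mu)n$ means every vertex misses at most $(\beta-\mu)n$ others, and note $2d-1=\frac{1}{3-2\varrho}=\frac{\beta}{1-\varrho}$. Call an ordered $r$-clique $(x_1,\dots,x_r)$ a \emph{good end} if its common neighbourhood $N^{*}:=\bigcap_i N(x_i)$ satisfies $|N^{*}|\ge(2d-1)n$; along good ends we will splice $r$-th powers of paths. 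The three ingredients are: (i) an absorbing $r$-th power of a path $P_{\mathrm{abs}}$, with good ends, whose interior can absorb any set of $o(n)$ leftover vertices; (ii) a bounded family of $r$-th powers of paths, each with good ends, covering all but $o(n)$ vertices of $G-V(P_{\mathrm{abs}})$; (iii) the connecting lemma: any two good ends can be joined by an $r$-th power of a path on at most $C(\mu,r)$ interior vertices, carried out disjointly for the $O(1)$ pairs we shall need, through vertices of a pre-reserved set.

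Ingredients (i) and (ii) should be within reach of existing technology, since $d$ lies comfortably above the $K_{r+1}$-factor threshold $1-\frac{2}{r+1}$ (indeed $d=1-\frac{2}{r+c}$ with $c\in\{8/3,3\}$ by \eqref{eq:rhoK}). One applies a Ramsey--Tur\'an regularity lemma; the reduced graph has minimum degree at least a $(d+\mu/2)$-proportion, so a Hajnal--Szemer\'edi-type argument (its Ramsey--Tur\'an analogue, which uses $\alpha(G)\le\alpha n$ and \eqref{eq:rhoK} to handle low-density clusters) yields an almost-perfect fractional $K_{r+1}$-cover, and a regularity/blow-up embedding --- again invoking \eqref{eq:rhoK} to locate $K_{r+1}$'s and short $r$-th powers of paths inside dense clusters of sublinear independence number --- converts this into the covering paths of (ii); running the same machine on a reserved constant fraction of $V(G)$, together with absorbing gadgets (for each $v$, linearly many $O(1)$-vertex sets admitting two $r$-th-power-of-path configurations, with and without $v$, sharing the same pair of good ends) chained together by (iii), builds $P_{\mathrm{abs}}$.

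The connecting lemma is where the value of $d$ is forced. From a good end $(u_1,\dots,u_r)$ with $|N^{*}|\ge(2d-1)n$, the key observation is that $\delta\bigl(G[N^{*}]\bigr)\ge|N^{*}|-(\beta-\mu)n>\varrho\,|N^{*}|$ exactly because $|N^{*}|>\frac{(\beta-\mu)n}{1-\varrho}$; since $\alpha(G[N^{*}])\le\alpha n=o(|N^{*}|)$, the Ramsey--Tur\'an density \eqref{eq:rhoK} guarantees not merely a $K_r$ but an $r$-th power of a path of any prescribed bounded length inside $N^{*}$ --- and such a path, appended to $(u_1,\dots,u_r)$, is automatically an $r$-th power of a path, since every vertex of $N^{*}$ is adjacent to all of $u_1,\dots,u_r$. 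This simultaneously buys the \emph{length flexibility} needed later to close the structure into a cycle rather than a path. Doing this from both ends and bridging the two resulting cliques (one lying in the $N^{*}$ of $(u_i)$, the other in that of $(w_i)$) through a short $r$-th power of a path completes the connector, and the precise cut-off $|N^{*}|>(2d-1)n=\frac{\beta n}{1-\varrho}$ is exactly the point at which this Ramsey--Tur\'an step becomes available; the matching construction of Proposition~\ref{prop1.3} shows the cut-off is not an artefact of the method.

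Finally one joins $P_{\mathrm{abs}}$ and the $O(1)$ covering paths cyclically by the connecting lemma, sizes the reserved set and covering family so that the $o(n)$ uncovered vertices together with the leftover reserved vertices are few enough to be swallowed, and absorbs them into $P_{\mathrm{abs}}$, producing an $r$-th power of a Hamilton cycle. I expect the main obstacle to be the connecting lemma itself: first, proving that good ends can always be produced in sufficient abundance --- they must be harvested from the dense clusters of the regularity partition, and one must be able to force both the covering paths and the absorbing gadgets to terminate in good ends; second, bridging two arbitrary $r$-cliques that have been landed in $N^{*}$-sets which need not overlap, and performing all required bridgings disjointly; and third, obtaining a sufficiently robust (supersaturated) form of \eqref{eq:rhoK} --- one locating $r$-th powers of \emph{paths}, not isolated cliques, inside dense regular pairs of sublinear independence number --- which will likely require engaging with the Bollob\'as--Erd\H{o}s-type extremal structures behind \eqref{eq:rhoK}, and presumably a separate treatment of the odd and even cases of $r$. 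Carrying out the programme first for the smallest open case $r=4$ would be a natural starting point.
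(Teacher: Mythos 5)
The statement you are proving is Conjecture~\ref{conj4} of the paper, which the authors explicitly leave open: the paper only proves Theorem~\ref{thm2} with the stronger degree condition $(1-\frac1r+\mu)n$, and it is stated there that the Connecting Lemma (Lemma~\ref{connect}) is the \emph{only} place where this stronger bound is used, everything else already working at lower thresholds. Your proposal correctly identifies that the whole difficulty sits in a connecting lemma at the threshold $d=\frac{2-\varrho(K_r)}{3-2\varrho(K_r)}$, but it does not close that difficulty, so what you have is a programme rather than a proof. Ingredients (i) and (ii) are plausible (they roughly correspond to the paper's Absorber, Reservoir and Almost Path-cover Lemmas, which indeed hold below $d$), but the connecting step contains a concretely false claim and then defers the genuinely hard part.

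The false claim: from $|N^{*}|\ge(2d-1)n$ you get $\delta(G[N^{*}])>\varrho(K_r)|N^{*}|$ (the arithmetic is right), but this together with $\alpha(G[N^{*}])=o(|N^{*}|)$ only guarantees a copy of $K_r$ in $N^{*}$, \emph{not} an $r$-th power of a path of prescribed bounded length. Any $r$-th power of a path on more than $r$ vertices contains $K_{r+1}$, and by the modified-arboricity theorem of Erd\H{o}s--Hajnal--S\'os--Szemer\'edi (Theorem~\ref{thm0.3}, used in the paper's proof of Lemma~\ref{lem2.1}) forcing $P^{r}_{2r}$ requires density above $\varrho(K_{r+1})$, which is strictly larger than $\varrho(K_r)$; the Ramsey--Tur\'an extremal graphs for $K_{r+1}$ witness that density just above $\varrho(K_r)$ does not suffice. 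Even if one repairs this by appending a single $K_r$ from $N^{*}$ (which does extend the end, since $N^{*}$ is complete to the end), the remaining step --- bridging the two $r$-cliques produced from the two ends, which lie in common neighbourhoods that need not interact --- is exactly the connecting lemma one set out to prove, and your sketch handles it with the phrase ``through a short $r$-th power of a path,'' which is circular. You list this, along with producing good ends in abundance and a supersaturated path version of \eqref{eq:rhoK}, as ``obstacles,'' and that is an accurate self-assessment: these are precisely the open issues (already for $r=4$) that separate the known Theorem~\ref{thm2} from Conjecture~\ref{conj4}, and the proposal does not resolve them.
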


The smallest open case is $r=4$ where the minimum degree assumption in Theorem \ref{thm2} is roughly $\frac{3}{4}n$, while Proposition \ref{prop1.3} only provides a lower bound roughly $\frac{7}{10}n$.

\subsection{Connecting barrier}\label{sec2}
To obtain an $r$-th power of a Hamilton cycle, an essential condition is that every two vertices can be connected via an $r$-path.
Motivated by this, we give two constructions for the proofs of Proposition \ref{prop1.2} and Proposition \ref{prop1.3}, in which the connecting property collapses.

In the proofs, we apply a degree form of Ramsey--Tur\'{a}n number which was recently studied in \cite{CHKWY}, and their main result implies that every extremal construction for the function $\textbf{RT}(n, K_{r}, o(n))$ can be made almost regular.

\begin{coro}[\cite{CHKWY}, Proposition~1.5]\label{cor1}
Given $r \in \mathbb{N}$ with $r \ge 3$ and constants $\mu,\alpha>0$, the following holds for all sufficiently large $n\in \mathbb{N}$.
There is an $n$-vertex graph $G$ with $\delta(G)\ge \varrho(K_r) n-\mu n $ and $\alpha(G) \le \alpha n$ such that it does not contain any copy of $K_{r}$.
\end{coro}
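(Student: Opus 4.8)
The statement is the ``degree form'' of the clique Ramsey--Tur\'{a}n theorem, so the shortest proof is to specialise \cite[Proposition~1.5]{CHKWY} to $K_r$ and insert the value of $\varrho(K_r)$ from~\eqref{eq:rhoK}. For concreteness I would describe the near-regular extremal constructions that this rests on, split by the parity of $r$.

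\emph{Odd $r$.} Write $r=2k+1$, so that $\varrho(K_r)=1-\tfrac1k$ and $k\ge1$. Partition $[n]$ into parts $V_1,\dots,V_k$ with $|V_i|=(1+o(1))\tfrac nk$, take $G$ to be complete $k$-partite between the parts, and place inside each $V_i$ a triangle-free graph with independence number $o(|V_i|)$, which exists for all large $|V_i|$ by the classical lower bounds on $R(3,t)$ (a random graph suffices). A clique of $G$ meets each $V_i$ in at most two vertices by triangle-freeness of the parts, hence has at most $2k<r$ vertices, so $G$ is $K_r$-free; moreover $\alpha(G)\le k\cdot o(n/k)=o(n)\le\alpha n$ once $n$ is large, and every vertex is complete to the other $k-1$ parts, so $\delta(G)\ge(1-\tfrac1k-o(1))n\ge\varrho(K_r)n-\mu n$. (The case $r=3$, $k=1$, is simply ``take a triangle-free graph with sublinear independence number'', with the vacuous bound $\delta(G)\ge0=\varrho(K_3)n$.)

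\emph{Even $r$.} Write $r=2k$, so that $\varrho(K_r)=1-\tfrac{3}{3k-2}$ and $k\ge2$. Take parts $V_1,\dots,V_{k-2}$ of size $(1+o(1))\tfrac{3n}{3k-2}$ each together with one part $W$ of size $(1+o(1))\tfrac{4n}{3k-2}$, so that the sizes sum to $n$; make $G$ complete multipartite between all parts, place triangle-free Ramsey graphs inside the $V_i$ as above, and inside $W$ place a Bollob\'{a}s--Erd\H{o}s graph, \ie a $K_4$-free graph with independence number $o(|W|)$ that is $(\tfrac14+o(1))$-regular. (For $r=4$ there are no $V_i$ and $G$ is a Bollob\'{a}s--Erd\H{o}s graph on $n$ vertices.) A clique of $G$ meets each $V_i$ in at most two vertices and meets $W$ in at most three vertices ($K_4$-freeness of the Bollob\'{a}s--Erd\H{o}s graph), hence has at most $2(k-2)+3=2k-1<r$ vertices, so $G$ is $K_r$-free, and $\alpha(G)=o(n)\le\alpha n$. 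For the minimum degree, a vertex of $V_i$ is complete to everything outside $V_i$ and so has degree at least $n-|V_i|=(1-\tfrac{3}{3k-2}-o(1))n$, while a vertex of $W$ is complete to $V_1\cup\dots\cup V_{k-2}$ and has $(\tfrac14-o(1))|W|$ neighbours inside $W$, so has degree at least $(k-2)\tfrac{3n}{3k-2}+\tfrac14\cdot\tfrac{4n}{3k-2}-o(n)=\tfrac{3k-5}{3k-2}n-o(n)=(1-\tfrac{3}{3k-2}-o(1))n$. Hence $\delta(G)\ge\varrho(K_r)n-\mu n$ for all large $n$.

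\emph{Main obstacle.} The only non-elementary input is the Bollob\'{a}s--Erd\H{o}s graph in the even case, and specifically its \emph{near-regularity}: this is what lets the vertices inside $W$ (not merely those in the $V_i$) reach the threshold $\varrho(K_r)n-\mu n$, and it is exactly the ``every extremal construction can be made almost regular'' phenomenon of~\cite{CHKWY}. Once near-regularity is available, choosing the part sizes to equalise the two degree estimates is the short computation above --- it forces $|W|=(1+o(1))\tfrac{4n}{3k-2}$, in agreement with the extremal count $\textbf{RT}(n,K_r,o(n))=(\varrho(K_r)+o(1))\binom n2$. Everything else --- clique-freeness by a pigeonhole over the parts, and the existence of triangle-free graphs with $o(n)$ independence number --- is standard.
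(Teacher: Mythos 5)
The paper does not actually prove this statement; it is quoted verbatim as \cite[Proposition~1.5]{CHKWY} and used as a black box. Your proposal therefore supplies a self-contained proof that the paper omits, and it is correct. What you describe is precisely the classical Ramsey--Tur\'an extremal construction (triangle-free Ramsey graphs inside the parts, and for even $r$ a Bollob\'as--Erd\H{o}s graph on one distinguished part), specialised so that the \emph{vertex degrees}, not merely the edge count, reach the threshold $\varrho(K_r)n - o(n)$. The parity split and the arithmetic check out: in the odd case $r=2k+1$ every vertex sees the other $k-1$ parts fully and so has degree $(1-\tfrac1k)n - o(n) = \varrho(K_r)n-o(n)$; in the even case $r=2k$ the part sizes $\tfrac{3n}{3k-2}$ and $|W|=\tfrac{4n}{3k-2}$ are tuned so that the $V_i$-vertices (degree $\ge n - |V_i|$) and the $W$-vertices (degree $\ge (k-2)\tfrac{3n}{3k-2} + \tfrac14|W|$) both come out to $\tfrac{3k-5}{3k-2}n = \varrho(K_r)n$, and the clique meets each $V_i$ in at most $2$ vertices and $W$ in at most $3$, giving at most $r-1 < r$ in total. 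You have also correctly isolated the one non-elementary ingredient: near-regularity of the Bollob\'as--Erd\H{o}s graph, which is exactly the ``extremal constructions can be made almost regular'' phenomenon that \cite{CHKWY} is invoked for. The only caveat worth recording is that since you are \emph{reproving} rather than citing, the burden of the $(\tfrac14+o(1))$-regularity of the Bollob\'as--Erd\H{o}s graph (which follows from measure concentration on the sphere) sits on your argument; the paper's citation to \cite{CHKWY} discharges this for you in one stroke, which is presumably why the authors chose it.
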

The proofs of Proposition \ref{prop1.2} and Proposition \ref{prop1.3} go as follows.
We build a graph $G$ with $V(G)=V_{1}\cup V_{2}\cup V_{3}$.
Let $G[V_{2}, V_{1}]$ and $G[V_{2}, V_{3}]$ be two complete bipartite graphs, $E(G[V_{1}, V_{3}])=\emptyset$ and $G[V_{2}]$ be a $K_{r}$-free graph with large minimum degree.
We shall prove that there is no $r$-th power of a Hamilton cycle in $G$.
Then we optimize the size of $V_{i}$ for every $i\in [3]$ in order to maximize the minimum degree of $G$.

\begin{proof}[Proof of Proposition \ref{prop1.2}]
Given $\alpha>0$ and $n\in \mathbb{N}$, let $G$ be an $n$-vertex graph with $V(G)=V_{1}\cup V_{2}\cup V_{3}$, $|V_{2}|=\alpha n$ and $|V_{1}|=|V_{3}|=\frac{1-\alpha}{2}n$.
Let $E(G[V_{2}])=\emptyset$, $G[V_{i}]$ be a complete graph and $G[V_{2}, V_{i}]$ be a complete bipartite graph for every $i\in\{1, 3\}$.
It holds that $\delta(G)\geq(\frac{1}{2}+\frac{\alpha}{2})n-1$ and $\alpha (G)=\alpha n$.
Suppose for a contradiction that there exists a square of a Hamilton cycle in $G$, say $C$.
Let $C_{u, v}$ be a shortest $2$-power of a path between $u$ and $v$ in $C$.
We choose two vertices $u\in V_{1}$ and $v\in V_{3}$ such that $|C_{u, v}|$ is minimum among all such pairs in $V_1\times V_3$.
Then as $uv\notin E(G)$, it holds that $V(C_{u, v})\backslash \{u, v\}\< V_{2}$ and $|V(C_{u, v})\cap V_{2}|\geq 2$.
Since $E(G[V_{2}])=\emptyset$, this is a contradiction.
\end{proof}

\begin{proof}[Proof of Proposition \ref{prop1.3}]Given $\alpha,\mu>0$ and $r\in \mathbb{N}$ with $r\geq 3$, we choose $\frac{1}{n}\ll \alpha,\mu$.
Let $G$ be an $n$-vertex graph with $V(G)=V_{1}\cup V_{2}\cup V_{3}$, $|V_{1}|=|V_{3}|=\frac{1-\varrho(K_r)}{3-2\varrho(K_r)}n$ and $|V_{2}|=\frac{n}{3-2\varrho(K_r)}$.
Let $G[V_{i}]$ be a complete graph and $G[V_{2}, V_{i}]$ be a complete bipartite graph for every $i\in\{1, 3\}$.
Let $G[V_{2}]$ be a $K_{r}$-free subgraph with $\alpha(G[V_{2}])\leq \alpha n$ and $\delta(G[V_{2}])\geq \varrho(K_r)|V_2|-\mu n$ given by Corollary~\ref{cor1}.
For every vertex $v\in V_{1}$, it holds that $d(v)=n-|V_{3}|-1=\frac{2-\varrho(K_r)}{3-2\varrho(K_r)}n-1$.
For every vertex $v\in V_3$, it holds that $d(v)=n-|V_{1}|-1=\frac{2-\varrho(K_r)}{3-2\varrho(K_r)}n-1$.
For every vertex $v\in V_{2}$, it holds that
\begin{align*}
  d(v) & \geq |V_{1}|+|V_{3}|+\delta(G[V_{2}]) \\
   & \geq n-|V_{2}|+\varrho(K_r)|V_2|-\mu n
    = \frac{2-\varrho(K_r)}{3-2\varrho(K_r)}n-\mu n.
\end{align*}
Now $G$ has $\delta(G)\geq \frac{2-\varrho(K_r)}{3-2\varrho(K_r)}n-\mu n$ and $\alpha(G)\leq \alpha n$.
Suppose for a contradiction that there exists an $r$-th power of a Hamilton cycle in $G$, say $C$.
Let $C_{u, v}$ be a shortest $r$-path between $u$ and $v$ in $C$.
We choose two vertices $u\in V_{1}$ and $v\in V_{3}$ such that $|C_{u, v}|$ is minimum among all such pairs in $V_1\times V_3$.
Then as $uv\notin E(G)$, we obtain that $V(C_{u, v})\backslash \{u, v\}\< V_{2}$ and $|V(C_{u, v})\cap V_2|\geq r$, which forces a copy of $K_{r}$ in $G[V_2]$, a contradiction.
\end{proof}

\subsection{Proof strategy}
Our proof makes use of the absorption method and builds on the techniques developed in \cite{RM2014}.
The absorption method was introduced by R\"{o}dl, Ruci\'{n}ski and Szemer\'{e}di about a decade ago in \cite{MR2500161}.
Since then, it has turned out to be an important tool for studying the existence of spanning structures in graphs, digraphs and hypergraphs.
%
%
Under this framework, we find the desired $r$-th power of a Hamilton cycle by four lemmas: the Absorber Lemma (Lemma \ref{lem2.1}), the Reservoir Lemma (Lemma \ref{lem2.2}), the Almost Path-cover Lemma (Lemma \ref{almost}) and the Connecting Lemma (Lemma \ref{connect}).

The main difficulties lie in proving a connecting lemma.
For every two copies of $K_r$, say $X$ and $Y$,
Koml\'{o}s, S\'{a}k\"{o}zy and Szemer\'{e}di \cite{KSGS1998} also proved such a connecting lemma: they first use the regularity method and transfer the problem to the \emph{reduced graph}, and then use the \emph{cascade} technique to do the connection via an $r$-th power of a walk.
Then the regularity method can find an embedding of an $r$-path that connects $X$ and $Y$.
Under our weaker minimum degree condition, using their method we can make such a connection in the reduced graph via an \emph{$(r-1)$-power} of a path.
Then some dedicate embedding using the regularity and the assumption of sublinear independence number\footnote{For now, one can simply think of that every cluster in the regularity partition contains many edges inside.} can help us to achieve the connection by an $r$-path of constant length.
To reduce the technicality, we first present an easy way to connect (in the reduced graph) via a short sequence of copies of $K_{r+1}$, where consecutive pairs of them share $r-1$ vertices; secondly, to ease the embedding process, we introduce the notion of \emph{good walk} (see Definition~\ref{def6.0}) and use it to model the way how an $r$-path is embedded into a sequence of clusters.

For the absorbing path, we first choose a random set $A$ of vertices and find an $r$-path $P$ where one can free any subset of vertices of $A$.
Then the vertices of $A$ can be used to help on connecting $r$-paths in later steps.

Here is a sketch the proof of Theorem~\ref{thm2}.
We apply the Reservoir Lemma to obtain a random set $A$.
Then we apply the Absorber Lemma, and find a collection of vertex-disjoint absorbers in $G-A$, one for each vertex of $A$.
By applying the Connecting Lemma, we connect all these $|A|$ absorbers one by one by using the vertices in $V(G)\backslash A$, and obtain an $r$-path, say $P_1$, which contains all these $|A|$ absorbers and every vertex in $A$.
Note that the $r$-path $P_1$ is an absorbing path in the sense that we can remove any subset of vertices of $A$ while keeping the ends of the $r$-path unchanged.
In $G-V(P_1)$, we apply the Almost Path-cover Lemma, and obtain a family of vertex-disjoint $r$-paths, say $\{P_2, P_3, \dots, P_\ell\}$ for some $\ell = o(n)$, and let $R:=V(G)\setminus \bigcup^{\ell}_{i=1}V(P_i)$.
By the choice of $A$, every vertex in $G$ has many neighbors in $A$ and thus many absorbers in $A$.
We then choose a collection of vertex-disjoint short $r$-paths that contain the vertices of $R$ with other vertices in $A$ -- this can be achieved by just choosing vertex-disjoint absorbers for vertices of $R$.
By applying the Connecting Lemma, we connect all these $|R|$ (short) $r$-paths and all these $\ell$ (long) $r$-paths one by one to an $r$-th power of a cycle by using the vertices in $A$.
Since all the vertices used are from $A$ (and thus from $P_1$), we obtain an $r$-th power of a Hamilton cycle in $G$.

%

\subsection{Basic notation}
In this subsection, we include some notation used throughout the paper. For a graph $G:= G(V, E)$, we write $|G|=|V(G)|$ and $e(G)=|E(G)|$.
For $U\< V(G)$, $G[U]$ denotes the induced graph of $G$ on $U$.
Let $G-U:=G[V(G)\backslash U]$.
For two subsets $A, B\< V(G)$, we use $E(A, B)$ to denote the set of edges with one endpoint in $A$ and the other in $B$.
Given $A\subseteq V(G)$ and $v\in V(G)$, $N_{A}(v):=N(v)\cap A$, and $d_{A}(v):=|N_{A}(v)|$.
When $A=V(G)$, we drop the subscript and simply write $d(v)$.
For $A\subseteq V(G)$, $N(A):=\bigcap_{v\in A}N(v)$.
For any integers $a\leq b$, $[a, b]:=\{i\in \mathbb{Z}: a\leq i\leq b\}$ and $[a]:= [1, a]$.
Given $r\in \mathbb{N}$, we use $\mathbf{S}_{r}$ to denote the family of all permutations of $[r]$.
Given $\pi\in \mathbf{S}_{r}$ and $i\neq j\in [r]$, we use $\pi\circ(i, j)$ to denote the permutation in $\mathbf{S}_{r}$ which comes from $\pi$ by swapping the $i$-th and $j$-th elements in $\pi$.
Given a vertex set $V=\{v_{1}, \dots, v_{\ell}\}$, let $\mathbf{S}_{V}$ be the family of all permutations of the vertices $v_{1}, \dots, v_{\ell}$.

Given an $r$-path $P=v_{1}v_{2}\dots v_{\ell}$ for some $\ell\in \mathbb{N}$ and $\ell\geq r$, two \emph{ends} of $P$ are defined as the $r$-tuples $(v_{r}, v_{r-1}, \dots, v_{1})$ and $(v_{\ell-r+1}, \dots, v_{\ell-1}, v_{\ell})$.
Given two disjoint $r$-tuples of vertices $\mf{x}, \mf{y}$ each inducing a copy of $K_{r}$, say $\mf{x}=(x_{1}, \dots, x_{r})$ and $\mf{y}=(y_{1},\dots, y_{r})$, and an $r$-path $P=v_{1}\dots v_{\ell}$ for some $\ell\in \mathbb{N}$ that is vertex disjoint from $\mf{x}$ and $\mf{y}$, we say $P$ \emph{connects} $\mf{x}$ and $\mf{y}$ if $v_{i}\in N(x_{i}, \dots, x_{r})$ and $y_{i}\in N(v_{\ell-r+i}, \dots, v_{\ell})$ for every $i\in [r]$.
Moreover, we use $\mf{x} P \mf{y}$ to denote the resulting $r$-path $x_{1}\dots x_{r}v_1v_2\ldots v_{\ell}y_{1}\dots y_{r}$.
Given two sequences $Q_{1}:=(S_1, \dots, S_{r})$ and $Q_{2}:=(T_{1}, \dots, T_{\ell})$, let $Q_{1}Q_2:=(S_1, \dots, S_{r}, T_{1}, \dots, T_{\ell})$.

When we write $\beta\ll \gamma$, we always mean that $\beta, \gamma$ are constants in $(0, 1)$, and there exists $\beta_{0}=\beta_{0}(\gamma)$ such that the subsequent statements hold for all $0<\beta\leq \beta_{0}$.
Hierarchies of other lengths are defined analogously.

The rest of the paper is organized as follows.
In Section \ref{sec3} we will give the proof of Theorem \ref{thm2}.
The proofs of the Absorber Lemma, the Reservoir Lemma and the Almost path-cover Lemma are in Subsection \ref{sec4}.
In Section \ref{sec6} we will introduce the regularity lemma and prove the Connecting Lemma which will comprise the majority of the paper.

\section{Proof of Theorem \ref{thm2}}\label{sec3}
In this section, we introduce the crucial lemmas used to prove our main result.
We explain how they work together to derive the proof of Theorem \ref{thm2}.
The proofs of these lemmas are presented in full detail in Subsection \ref{sec4} and Section \ref{sec6}, respectively.


\subsection{Main tools}\label{sec3.1}
Before presenting the statements of these crucial lemmas, we need to introduce one more notion.

\begin{defn}[Absorber]
Given $v\in V(G)$, we say that an $r$-path $P$ of order $2r$ is an \emph{absorber} for $v$ if $V(P)\cup\{v\}$ induces an $r$-path of order $2r+1$ (in $G$) which shares the same ends with $P$.
For $v\in V(G)$ and $U\subseteq V(G)$, let $\mathcal{L}_{U}(v)$ be a maximum set of vertex-disjoint absorbers of $v$ in $G[U]$. We often omit the subscript $U$ if the graph is clear from the context.
\end{defn}


We are actually able to prove a slightly stronger result by replacing $1-\frac{1}{r}$ in Theorem \ref{thm2} with $1-\frac{1}{f(r)}$ for some $0<f(r)\leq r$.

\begin{lemma}[Absorber Lemma]\label{lem2.1}
Given $r\in \mathbb{N}$ and $\mu>0$, let $f(r):=\frac{r}{2}+1$ if $r\in 2\mathbb{N}$, and $f(r):=\frac{r}{2}+\frac{5}{6}$ if $r\in 2\mathbb{N}+1$.
There exists $\alpha>0$ such that the following holds for sufficiently large $n$.
Let $G$ be an $n$-vertex graph with $\delta(G)\geq \left(1-\frac{1}{f(r)}+\mu\right)n$, $\alpha(G)\leq \alpha n$, and $W\< V(G)$ with $|W|\leq\frac{\mu}{2}n$.
Then every $v\in V(G)$ has at least one absorber in $G[N(v)\backslash W]$.
\end{lemma}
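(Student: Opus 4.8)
The plan is to reduce Lemma~\ref{lem2.1} to finding a short power of a path inside $N(v)$, and then to build that structure using Ramsey--Tur\'an-type arguments together with the sublinear independence number. For the reduction, set $H:=G[N(v)\backslash W]$ and observe that it suffices to find inside $H$ an $r$-path $P=v_1v_2\dots v_{2r}$ of order $2r$: placing $v$ between $v_r$ and $v_{r+1}$ produces the order $v_1\dots v_r\,v\,v_{r+1}\dots v_{2r}$, which is an $r$-path of order $2r+1$ with the same two ends as $P$. Indeed $v$ is adjacent to all of $v_1,\dots,v_{2r}$ because these lie in $N(v)$, while every other adjacency required by the new order was already present in $P$. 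Hence such a $P$ is an absorber for $v$ contained in $G[N(v)\backslash W]$, as required.

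Next I would record the parameters of $H$. Since $|W|\le\tfrac\mu2 n$ we have $|H|\ge\delta(G)-|W|\ge\bigl(1-\tfrac1{f(r)}+\tfrac\mu2\bigr)n$, so $|H|=\Omega(n)$ and $\alpha(H)\le\alpha(G)\le\alpha n=o(|H|)$. For each $u\in V(H)$,
\[
 d_H(u)\ \ge\ d_G(u)-(n-|H|)\ \ge\ |H|-\bigl(n-\delta(G)\bigr)\ \ge\ |H|-\bigl(\tfrac1{f(r)}-\mu\bigr)n .
\]
Taking the worst case $|H|=\bigl(1-\tfrac1{f(r)}+\tfrac\mu2\bigr)n$ and using the identity $\frac{f(r)-2}{f(r)-1}=\varrho(K_{r+1})$ --- which follows from the definition of $f(r)$ and \eqref{eq:rhoK}, with the two cases ($r$ even / odd) of $f(r)$ matching the two cases of $\varrho(K_{r+1})$ --- this yields $\delta(H)\ge\bigl(\varrho(K_{r+1})+\mu'\bigr)|H|$ for some $\mu'=\mu'(\mu,r)>0$. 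Equivalently, common neighbourhoods of up to $\lfloor f(r)-1\rfloor\ (\approx r/2)$ vertices taken inside $N(v)\backslash W$ have $\Omega(n)$ vertices and independence number $o(n)$. It therefore remains to show: \emph{a graph $H$ with $\alpha(H)=o(|H|)$ and $\delta(H)\ge(\varrho(K_{r+1})+\mu')|H|$ contains an $r$-path of order $2r$}.

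To build the $r$-path of order $2r$, note that $v_1\dots v_{2r}$ is an $r$-path exactly when each of the $r$ ``windows'' $\{v_i,v_{i+1},\dots,v_{i+r}\}$ induces a $K_{r+1}$. The natural attempt is to find a $K_{r+1}$ (which exists by Ramsey--Tur\'an, as $\delta(H)/|H|>\varrho(K_{r+1})$ and $\alpha(H)=o(|H|)$), call it $v_1\dots v_{r+1}$, and then add $v_{r+2},v_{r+3},\dots,v_{2r}$ one at a time, each new vertex lying in the common neighbourhood of the $r$ most recently placed ones. A naive greedy fails because a common neighbourhood of $r$ vertices need not be linear (indeed it may be empty). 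The fix is to choose the $K_{r+1}$ and the order of the extensions cleverly, so that one never commits to the common neighbourhood of more than $\lfloor f(r)-1\rfloor$ already-placed vertices at once; the parameter computation above guarantees that each such common neighbourhood (taken inside $N(v)\backslash W$) is linear-sized with sublinear independence number, so that the Ramsey--Tur\'an machinery again supplies, inside it, the further cliques and short $r$-paths needed to continue the construction. Iterating produces all $r$ windows simultaneously, hence the $r$-path of order $2r$.

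The main obstacle will be exactly this last point: an $r$-path of order $2r$ has clique number $r+1$, so any construction must somewhere locate a clique on more than $\approx r/2$ vertices --- more than the number whose common neighbourhood is guaranteed to stay linear --- and it is the interplay between the sublinear independence number, the fixed margin above $\varrho(K_{r+1})$, and the precise (parity-dependent) value of $f(r)$ that must be exploited to make the bookkeeping close. With the constants chosen as $\frac1n\ll\alpha\ll\mu$, this completes the plan.
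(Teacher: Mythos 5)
The first half of your argument — the reduction to finding a copy of $P^r_{2r}$ inside $H:=G[N(v)\setminus W]$, and the computation that $\delta(H)/|H|>\varrho(K_{r+1})$ via the identity $\tfrac{f(r)-2}{f(r)-1}=\varrho(K_{r+1})$ — is exactly what the paper does and is correct. The difficulty is that you then try to \emph{construct} $P^r_{2r}$ by hand: start from a $K_{r+1}$ and extend it greedily, never passing to the common neighbourhood of more than about $r/2$ placed vertices. You correctly identify that this breaks down (one cannot avoid ever committing to a clique of size $r+1$, whose common neighbourhood is not guaranteed to be linear), but you do not resolve it; the final paragraph names the obstacle and then declares the plan complete, which it isn't. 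As stated, this is a genuine gap, not a bookkeeping detail: the whole point of the lemma is to defeat exactly the failure you describe.

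The step you are missing is that you do not need any explicit embedding. Erd\H{o}s, Hajnal, S\'os and Szemer\'edi proved that for any graph $F$ with modified arboricity $AR(F)\le\ell$, one has $\textbf{RT}(n,F,o(n))\le\textbf{RT}(n,K_\ell,o(n))$ (Theorem~\ref{thm0.3} in the paper, proved via their tree-building lemma). Combined with the elementary computation $AR(P^r_{2r})=r+1$ (Proposition~\ref{prop3.3}: $K_{r+1}\subseteq P^r_{2r}$ gives the lower bound, and an explicit acyclic partition into $\lceil (r+1)/2\rceil$ pieces, one of them independent when $r$ is even, gives the upper bound), the bound $\delta(H)\ge(\varrho(K_{r+1})+\mu')|H|$ together with $\alpha(H)=o(|H|)$ immediately forces $e(H)>\textbf{RT}(|H|,K_{r+1},2\alpha|H|)\ge\textbf{RT}(|H|,P^r_{2r},2\alpha|H|)$, and hence a copy of $P^r_{2r}$ in $H$. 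This black-box deduction replaces the entire greedy construction you were attempting, and in particular sidesteps the parity bookkeeping you were worried about: the parity dependence of $f(r)$ is already absorbed into matching $\tfrac{f(r)-2}{f(r)-1}$ to the parity-dependent formula for $\varrho(K_{r+1})$, and nothing further is needed.
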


For convenience, we will use the following immediate corollary of Lemma \ref{lem2.1}.

\begin{cor}\label{coro2.1}
Given $r\in \mathbb{N}$ and $\mu>0$, let $f(r):=\frac{r}{2}+1$ if $r\in 2\mathbb{N}$, and $f(r):=\frac{r}{2}+\frac{5}{6}$ if $r\in 2\mathbb{N}+1$.
There exists $\alpha>0$ such that the following holds for sufficiently large $n$.
Let $G$ be an $n$-vertex graph with $\delta(G)\geq \left(1-\frac{1}{f(r)}+\mu\right)n$ and $\alpha(G)\leq \alpha n$.
Then $|\mathcal{L}(v)|\geq \frac{\mu}{4r}n$ for every $v\in V(G)$.
\end{cor}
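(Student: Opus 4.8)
The plan is to deduce the linear bound on $|\mathcal{L}(v)|$ from the Absorber Lemma by a standard greedy/iterative argument. Fix $v\in V(G)$ and build a maximal collection of vertex-disjoint absorbers for $v$ one at a time, recording at each step the set $W$ of all vertices already used. So long as $|W|\le \frac{\mu}{2}n$, Lemma \ref{lem2.1} (applied with this $W$) guarantees at least one more absorber for $v$ inside $G[N(v)\setminus W]$, which is vertex-disjoint from everything chosen so far; we add it to the collection. Each absorber has order $2r$, so after $k$ steps we have $|W|=2rk$. Hence we can keep going until $2rk$ would exceed $\frac{\mu}{2}n$, i.e.\ we obtain at least $\big\lfloor \frac{\mu n}{4r}\big\rfloor$ vertex-disjoint absorbers, and for $n$ large this is at least $\frac{\mu}{4r}n$ (or we simply state the bound as $\ge \frac{\mu}{4r}n$ after a harmless adjustment of constants, noting $|\mathcal L(v)|$ is defined to be a maximum such set so it is at least as large as the greedily constructed one). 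Since $v$ was arbitrary, the bound holds for every $v\in V(G)$.

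The one point to check is that the hypotheses of Lemma \ref{lem2.1} are met at each step. The minimum degree and independence number conditions are exactly those in the statement of the corollary (with the same function $f(r)$ and a small enough $\alpha$, chosen at most the $\alpha$ supplied by Lemma \ref{lem2.1}), and the constraint $|W|\le\frac{\mu}{2}n$ is maintained by construction since we stop before it is violated. There is nothing delicate here: the absorbers produced by the lemma lie in $N(v)\setminus W$, so in particular they avoid $W$ and are genuinely vertex-disjoint from the previously chosen ones (and from $v$, since $v\notin N(v)$).

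I do not anticipate any real obstacle; the only mild care needed is bookkeeping the floor in $\lfloor \mu n/(4r)\rfloor$ versus the clean bound $\mu n/(4r)$, which is absorbed into "sufficiently large $n$", and making sure $\alpha$ in the corollary is taken no larger than the threshold $\alpha$ from Lemma \ref{lem2.1}.
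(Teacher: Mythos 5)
Your proof is correct and is precisely the ``immediate'' derivation the paper has in mind (the paper gives no explicit proof for Corollary~\ref{coro2.1}, simply calling it an immediate corollary of Lemma~\ref{lem2.1}). The greedy argument, the bookkeeping $|W|=2rk$ (each absorber has order $2r$), the observation that the absorbers lie in $N(v)\setminus W$ and hence avoid both $v$ and previously chosen absorbers, and the final use of the maximality of $\mathcal{L}(v)$ are all exactly what is needed; choosing the corollary's $\alpha$ to be at most the $\alpha$ from Lemma~\ref{lem2.1} handles the hierarchy.
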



As mentioned earlier, we need to reserve a vertex subset in the original graph to connect a small number of $r$-paths.
The following lemma provides such a vertex subset.

\begin{lemma}[Reservoir Lemma]\label{lem2.2}
Given $c, \mu, \eta, \gamma>0$, there exists $\zeta>0$ such that the following holds for sufficiently large $n$.
Let $G$ be an $n$-vertex graph with $\delta(G)\geq \left(c+\mu\right)n$, and $|\mathcal{L}(v)|\geq \eta n$ for every $v\in V(G)$.
Then there exists a vertex subset $A\< V(G)$ with $2r\zeta n\leq |A|\leq \gamma n$ such that for every $v\in V(G)$, it holds that $|N_A(v)|\geq \left(c+\frac{\mu}{2}\right)|A|$ and $|\mathcal{L}_{A}(v)|\geq \zeta n$.
\end{lemma}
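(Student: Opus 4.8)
The plan is to obtain $A$ by a standard random-selection argument: include each vertex of $V(G)$ in a set $A$ independently with probability $p := \gamma/2$, and then verify via concentration inequalities that, with positive probability, $A$ simultaneously satisfies all the required properties. Set $\zeta$ to be a sufficiently small constant depending on $c, \mu, \eta, \gamma$ (e.g.\ $\zeta \ll \eta p$ and $\zeta \ll \mu\gamma$); the exact choice will fall out of the computations below.

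First I would control the size of $A$. Since $\mathbb{E}|A| = pn = \gamma n/2$, a Chernoff bound gives $|A| \le \gamma n$ with probability $1 - o(1)$, and likewise $|A| \ge \gamma n/4 \ge 2r\zeta n$ for $\zeta$ small. Next, for the degree condition, fix $v \in V(G)$; then $\mathbb{E}|N_A(v)| = p\,d_G(v) \ge p(c+\mu)n$, and since $d_G(v) \le n$ while we want to compare against $(c+\tfrac{\mu}{2})|A|$ where $|A|$ concentrates around $pn$, another Chernoff bound shows that $|N_A(v)| \ge (c+\tfrac{\mu}{2})|A|$ fails for a fixed $v$ with probability at most $e^{-\Omega(n)}$; a union bound over all $n$ vertices kills this. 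The slack $\mu/2$ is exactly what absorbs the two-sided fluctuation of both $|N_A(v)|$ and $|A|$.

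The main obstacle, and the only genuinely non-routine point, is the lower bound $|\mathcal{L}_A(v)| \ge \zeta n$ on the number of vertex-disjoint absorbers of $v$ lying entirely inside $A$. The hypothesis gives a family $\mathcal{L}(v)$ of at least $\eta n$ vertex-disjoint absorbers of $v$ in $G$, each of order $2r$; these absorbers occupy pairwise disjoint $2r$-sets of vertices. An absorber $P \in \mathcal{L}(v)$ survives into $G[A]$ precisely when all $2r$ of its vertices land in $A$, which happens independently (across the disjoint absorbers) with probability $p^{2r}$. Hence the number $N := |\{P \in \mathcal{L}(v) : V(P) \subseteq A\}|$ of surviving absorbers is a sum of at least $\eta n$ independent indicator variables with $\mathbb{E} N \ge p^{2r}\eta n$, so by Chernoff $N \ge \tfrac12 p^{2r}\eta n =: \zeta n$ with probability $1 - e^{-\Omega(n)}$; the surviving absorbers are still vertex-disjoint and still lie in $G[A]$, so $|\mathcal{L}_A(v)| \ge N \ge \zeta n$. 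A union bound over the $n$ choices of $v$ handles all vertices at once. One subtlety to record: the definition of absorber refers to adjacencies among the $2r$ vertices of $P$ and to $v$, and these are unaffected by passing to an induced subgraph, so each surviving $P$ is indeed still an absorber for $v$ in $G[A]$ (note $v$ itself need not lie in $A$, which is consistent with the statement).

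Finally I would combine the four events—$|A| \le \gamma n$, $|A| \ge 2r\zeta n$, the degree condition for all $v$, and the absorber condition for all $v$—each failing with probability $o(1)$ (indeed $e^{-\Omega(n)}$ for the per-vertex events after the union bound, using that $n$ is large), so their conjunction holds with positive probability, and any $A$ in that event witnesses the lemma. I expect essentially no difficulty beyond bookkeeping the constants in the hierarchy $\zeta \ll \eta, \gamma$ and $\mu$; the one thing to be careful about is that the $2r$ vertices of distinct absorbers in $\mathcal{L}(v)$ are genuinely disjoint (guaranteed by the maximality/disjointness built into the notation $\mathcal{L}(v)$), which is what licenses the independence used in the Chernoff bound for $N$.
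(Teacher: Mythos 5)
Your proposal is correct and matches the paper's proof essentially step for step: the same random inclusion with $p=\gamma/2$, Chernoff bounds for $|A|$, $d_A(v)$, and the surviving absorbers, and a union bound. The only cosmetic difference is that you obtain the lower bound $|A|\ge 2r\zeta n$ directly from Chernoff on $|A|$, whereas the paper derives it as a trivial consequence of $|\mathcal{L}_A(v)|\ge\zeta n$ (since those absorbers are vertex-disjoint and each occupies $2r$ vertices of $A$); both are fine. Your extra remark about why the Chernoff bound is legitimate for the absorber count (independence of the indicator variables over vertex-disjoint $2r$-sets, and lower-bounding $|\mathcal{L}_A(v)|$ by the count of surviving members of a fixed disjoint family rather than applying Chernoff to $|\mathcal{L}_A(v)|$ itself) is a point the paper glosses over, and your treatment is the cleaner one.
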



The following lemma provides a family of vertex-disjoint $r$-paths that cover almost all vertices in $G$.
We are actually able to prove a slightly stronger result by replacing $1-\frac{1}{r}$ in Theorem \ref{thm2} with $1-\frac{1}{g(r)}$ for some $0<g(r)\leq r$.

\begin{lemma}[Almost path-cover Lemma]\label{almost}
Given $\mu, \delta>0$ and $r, \ell\in \mathbb{N}$, let $g(r):=\lfloor\frac{r}{2}\rfloor+1$.
There exists $\alpha>0$ such that the following holds for sufficiently large $n$.
Let $G$ be an $n$-vertex graph with $\delta(G)\geq \left(1-\frac{1}{g(r)}+\mu\right)n$ and $\alpha(G)\leq \alpha n$.
Then $G$ contains a family of vertex-disjoint $r$-paths with the same length $\ell$, which covers all but at most $\delta n$ vertices in $G$.
\end{lemma}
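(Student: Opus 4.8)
The plan is to use a standard two-step strategy: first cover almost all of $V(G)$ by a single long $r$-path (or by a constant number of long $r$-paths), and then cut it into pieces of a prescribed common length $\ell$, discarding a negligible remainder. The key observation is that the minimum degree threshold $1-\frac{1}{g(r)} = 1-\frac{1}{\lfloor r/2\rfloor+1}$ is exactly what is needed to make the relevant ``almost-covering'' argument work: it forces an abundance of copies of $K_{r+1}$ through every vertex, which is what an $r$-path consumes locally. Concretely, I would argue as follows.

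First I would establish that $G$ contains a copy of an $r$-th power of a path $P_0$ on all but at most $\delta' n$ vertices, for some $\delta' \ll \delta$. The standard way to do this under a Ramsey--Tur\'an-type hypothesis is via a greedy/extremal argument combined with the sublinear independence number: since $\delta(G) \ge (1-\frac{1}{\lfloor r/2\rfloor+1}+\mu)n$ and $\alpha(G)\le\alpha n$, one can repeatedly extend a current $r$-path greedily — at each step the common neighbourhood of the last $r$ vertices of the path has size at least roughly $(1-\frac{r}{\lfloor r/2\rfloor+1})n>0$ minus lower-order terms, and within that common neighbourhood the sublinear independence number guarantees we do not get stuck on an independent set, so we can always find a new vertex forming a $K_{r+1}$ with the last $r$ vertices. (More carefully: the bound $\delta(G)\ge (1-\frac1{\lfloor r/2\rfloor+1}+\mu)n$ is tuned so that $r$ vertices have common neighbourhood of density at least $\mu'$ when $r$ is even, and one uses the small independence number to push through the odd case and to avoid running into a small leftover set.) Iterating and then using an absorbing-type or switching-type cleanup, one covers all but $o(n)$ vertices; alternatively one extracts a bounded number of vertex-disjoint long $r$-paths whose union covers $(1-\delta')n$ vertices.

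Next, given the near-spanning $r$-path $P_0$ (or the bounded collection of long $r$-paths), I would simply chop it into consecutive segments each of length exactly $\ell$. An $r$-path of length $L$ splits into $\lfloor L/\ell \rfloor$ vertex-disjoint $r$-subpaths of length $\ell$ (each segment is again an $r$-path since consecutive vertices within distance $r$ are adjacent), with at most $\ell-1 < \ell$ vertices left over at the end of each long path. Since there are only a bounded number of long paths, the total waste from chopping is $O(\ell) = o(n)$, and combined with the at-most-$\delta' n$ vertices left uncovered by $P_0$, the total number of uncovered vertices is at most $\delta n$ once $n$ is large and $\delta' $ is chosen small enough relative to $\delta$. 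This yields the desired family of vertex-disjoint $r$-paths all of length $\ell$ covering all but $\delta n$ vertices.

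The main obstacle is the first step — producing the near-spanning $r$-path with the \emph{weakened} minimum degree $1-\frac1{\lfloor r/2\rfloor+1}$ rather than the P\'osa--Seymour value $\frac{r}{r+1}$. The greedy extension breaks down when the common neighbourhood of the last $r$ vertices is too small to guarantee an independent-set-free continuation, so the argument must exploit $\alpha(G)=o(n)$ in an essential way (e.g. to show that any set of $\Omega(n)$ vertices spans an edge, indeed a $K_{r+1}$ after intersecting with appropriate common neighbourhoods, when $r$ is odd one needs to be a bit cleverer since $\lfloor r/2\rfloor+1$ vs $(r+1)/2$ gives a slightly weaker count). I would handle this by a regularity-based or a careful extremal argument: pass to a reduced graph, note that its minimum degree forces it to contain a long walk / path structure that lifts (via the regularity method together with the density-of-edges-in-clusters fact footnoted in the excerpt) to an $r$-path covering almost all real vertices. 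The parity split in the definition of $g(r)$ — value $\lfloor r/2\rfloor+1$ — is precisely the place where the odd case costs nothing extra over $\frac{r}{2}+1$, which is consistent with the Absorber Lemma needing the stronger $f(r)=\frac r2+\frac56$ in the odd case; so the almost-cover step is genuinely easier than absorption, and a direct combinatorial argument without full regularity may suffice.
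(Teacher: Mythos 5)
Your proposal takes a genuinely different route from the paper. The paper treats Lemma~\ref{almost} as a corollary of a black-box almost-tiling result (Lemma~3.1 of Chen--Han--Wang--Yang, restated as Lemma~\ref{lem4.1}): apply it with $H = P^r_\ell$ and observe that the vertex arboricity satisfies $\mathrm{ar}(P^r_\ell) = \lfloor r/2\rfloor+1 = g(r)$, since the vertex set of an $r$-th power of a path splits into $g(r)$ forests. You instead propose to grow one (or a bounded number of) very long $r$-paths and chop them into pieces of common length $\ell$; the chopping step is fine, but this is not how the paper proceeds.

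The problem is that your key step fails. At minimum degree $\bigl(1-\frac{1}{g(r)}+\mu\bigr)n$, the common neighbourhood of the last $r$ vertices of a partially built $r$-path need not be nonempty when $r\ge 3$: each vertex may have $n/g(r)$ non-neighbours, and $r/g(r)\ge 1$ (indeed $r/g(r)>1$ for $r\ge 3$), so the bound you invoke, $\bigl(1-\frac{r}{\lfloor r/2\rfloor+1}\bigr)n>0$, is simply false for $r\ge 3$; it already fails at $r=3$ and $r=4$, including the even case you claim is unproblematic. The sublinear independence number helps find edges inside large sets, but it cannot manufacture a common neighbourhood that may be empty. Moreover, a bounded collection of near-spanning $r$-paths is likely too much to ask at this threshold: the paper's Connecting Lemma (Lemma~\ref{connect}) needs $\delta(G)\ge\bigl(1-\frac1r+\mu\bigr)n$, which is strictly larger than $\bigl(1-\frac1{g(r)}+\mu\bigr)n$ for $r\ge3$, and the constructions in Propositions~\ref{prop1.2} and~\ref{prop1.3} show that below that degree one cannot even join two cliques by a short $r$-path. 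The correct local quantity is not the common neighbourhood of $r$ vertices but the joint structure of $g(r)$ clusters carrying an acyclic partition of $P^r_\ell$: one embeds a forest (not a clique) into each cluster, which is exactly why $g(r)=\mathrm{ar}(P^r_\ell)$ replaces $r$ in the degree condition. Without that replacement, your greedy/regularity sketch does not yield the near-spanning path you need, so the proposal has a genuine gap at its central step.
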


We also require the following result to connect two vertex-disjoint $r$-paths. This lemma is the only part of the proof of Theorem \ref{thm2} that requires $\delta(G)\geq (1-\frac{1}{r}+\mu)n$ (elsewhere, $\delta(G)\geq (1-\frac{2}{r+2}+\mu)n$ suffices).

\begin{lemma}[Connecting Lemma]\label{connect}
Given $r\in \mathbb{N}$ and $\mu>0$, there exists $\alpha>0$ such that the following holds for sufficiently large $n$.
Let $G$ be an $n$-vertex graph with $\delta(G)\geq \left(1-\frac{1}{r}+\mu\right)n$ and $\alpha(G)\leq \alpha n$.
For every two disjoint $r$-tuples of vertices, denoted as $\mf{x}$ and $\mf{y}$, each inducing a copy of $K_{r}$ in $G$, there exists an $r$-path in $G$ on at most $200r^{5}$ vertices, which connects $\mf{x}$ and $\mf{y}$.
\end{lemma}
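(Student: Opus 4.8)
The plan is to run Szemer\'edi's regularity method on $G$ and split the connection into two stages: a purely combinatorial connection in the \emph{reduced graph}, and then a careful vertex-by-vertex embedding of a genuine $r$-path that exploits the sublinear independence number. First I would apply the Regularity Lemma with $\eps\ll d\ll\mu$ to obtain an $\eps$-regular partition of $G$ into $M=M(\eps)$ clusters; discarding the sublinear exceptional set and the clusters incident to few dense pairs, let $R$ be the reduced graph on the remaining dense pairs. A routine computation gives $\delta(R)\ge(1-\tfrac1r+\tfrac\mu2)|R|$, and since the hierarchy lets us take $\alpha\ll 1/M$ only at the very end, every cluster $V_i$ satisfies $\alpha(G[V_i])\le\alpha n<|V_i|/2$. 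The only consequence of small independence number the embedding will use is that every subset of a cluster of size exceeding $\alpha n$ spans an edge (indeed a near-perfect matching); this is the ``many edges inside each cluster'' alluded to in the footnote of Subsection~\ref{sec3.1}.

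For the first stage, the degree condition forces $\Omega(|R|^{r+1})$ copies of $K_{r+1}$ in $R$, and, adapting the cascade technique of Koml\'os--S\'ark\"ozy--Szemer\'edi~\cite{KSGS1998} to the weaker bound $\delta(R)>(1-\tfrac1r)|R|$, I would show that any two copies of $K_{r+1}$ in $R$ are joined by a sequence $Q_1,Q_2,\dots,Q_m$ of copies of $K_{r+1}$ with $m=O(r^{3})$ and $|Q_i\cap Q_{i+1}|=r-1$ for every $i$. The point of insisting that consecutive cliques share exactly $r-1$ clusters --- rather than the $r$ clusters one gets under the P\'osa--Seymour condition $\delta(G)\ge(1-\tfrac1{r+1})n$ --- is that the $r-1$ common clusters of $Q_i$ and $Q_{i+1}$, once one of them is allowed to host two path-vertices joined by an internal edge, already carry an $r$-window of the $r$-path, which is what lets us hand off from $Q_i$ to $Q_{i+1}$; it is here that $\delta(G)\ge(1-\tfrac1r+\mu)n$ is genuinely needed, matching the comment preceding the statement of Lemma~\ref{connect}.

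In the second stage one reads off from such a sequence a \emph{good walk} $W_{j_1}W_{j_2}\cdots W_{j_N}$ in $R$ of length $N=O(r^{4})$ in the sense of Definition~\ref{def6.0}: a walk through clusters together with a prescribed pattern of single and double placements of path-vertices, arranged so that the at most $r$ clusters hosting any $r+1$ consecutive path-vertices form a clique of $R$. The combinatorial heart is to check that doubling up inside a cluster whenever the walk dwells in the shared $r-1$ clusters of two consecutive $Q_i$'s keeps every $(r+1)$-window inside some $K_r$ of the walk --- possible because $\lceil(r+1)/2\rceil\le r$ --- while revisiting each cluster only $O(1)$ times. Then embed greedily: having placed $v_1,\dots,v_k$, the vertex $v_{k+1}$ must be adjacent to $v_{k-r+1},\dots,v_k$, which lie in at most $r$ clusters, all regular to $W_{j_{t+1}}$; iterating $\eps$-regularity, the candidate set for $v_{k+1}$ inside $W_{j_{t+1}}$ has size at least $(d-\eps)^{r}|W_{j_{t+1}}|-O(r^{5})$, still linear for $n$ large, so a ``typical'' valid choice avoiding all previously used vertices exists; when the pattern calls for two path-vertices in one cluster, the relevant linear-sized candidate set spans the required edge because its independence number is at most $\alpha n$. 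As the walk has constant length, this greedy procedure never gets stuck.

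It remains to splice in $\mf x$ and $\mf y$. Since each $x_i$ has degree at least $(1-\tfrac1r+\mu)n$, the nested common neighbourhoods $N(x_1,\dots,x_r)\subseteq\cdots\subseteq N(x_r)$ are all linear in $n$, so some cluster has linear intersection with $N(\mf x)$ and lies in a copy of $K_{r+1}$ of $R$; starting the good walk at this copy and choosing $v_1,\dots,v_r$ greedily inside the appropriate common neighbourhoods --- which stay linear after intersecting with the candidate sets --- yields the end $(v_r,\dots,v_1)$ with $v_i\in N(x_i,\dots,x_r)$, and symmetrically at the other end. Concatenating gives an $r$-path on $O(r^{5})$ vertices, disjoint from $\mf x,\mf y$, that connects them; bookkeeping keeps this below $200r^{5}$. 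I expect the main obstacle to be this second stage: simultaneously designing the single/double pattern and the hand-offs in the shared $r-1$ clusters so that \emph{every} $(r+1)$-window sits inside a clique of $R$, and bounding the number of revisits to each cluster so that the greedy embedding always has room --- exactly the bookkeeping that Definition~\ref{def6.0} and the bulk of Section~\ref{sec6} are built to control. The reduced-graph connection of the first stage is comparatively routine once the notion of ``$K_{r+1}$'s sharing $r-1$ clusters'' is in place.
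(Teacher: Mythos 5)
Your proposal follows the paper's proof almost step for step: apply the regularity lemma, use $\delta(R)\ge(1-\tfrac1r+\tfrac\mu2)|R|$ to chain copies of $K_{r+1}$ in the reduced graph sharing $r-1$ clusters, encode that chain as a good walk with sparsely-placed lazy (doubled) positions, and embed greedily via iterated $\varepsilon$-regularity, using $\alpha(G)=o(n)$ to find an edge inside the candidate set whenever a cluster must host two consecutive path-vertices; finally splice in $\mathbf{x}$ and $\mathbf{y}$ by locating clusters with linear intersection with the common neighbourhoods. The paper organizes exactly this plan through Lemma~\ref{lem6.3} (the $K_{r+1}$-chain with $r-1$ overlaps), Lemmas~\ref{lem6.5} and~\ref{lem6.6} (assembling a good walk with controlled spacing of lazy elements), Claim~\ref{cl6.3} (the splicing), and Lemmas~\ref{lem6.30} and~\ref{lem6.31} (the phased embedding), so you have recovered its structure essentially in full.
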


More often, given an $r$-tuple $\mf{x}=(x_1,x_2,\ldots,x_r)$, we write $\overf{x}:=(x_r,x_{r-1},\ldots,x_1)$.
Now we are ready to prove Theorem \ref{thm2} using Corollary \ref{coro2.1}, Lemma \ref{lem2.2}, Lemma \ref{almost} and Lemma \ref{connect}.

\begin{proof} [Proof of Theorem \ref{thm2}]
Given $\mu>0$ and $r\in \mathbb{N}$, we choose
\begin{center}
$\frac{1}{n}\ll \alpha\ll\frac{1}{\ell}, \delta\ll\zeta\ll\gamma\ll\mu, \frac{1}{r}$.
\end{center}
Let $\eta=\frac{\mu}{4r}$, $G$ be an $n$-vertex graph with $\delta(G)\geq \left(1-\frac{1}{r}+\mu\right)n$ and $\alpha(G)\leq \alpha n$.
Applying Corollary \ref{coro2.1} to $G$, we get that $|\mathcal{L}(v)|\geq \frac{\mu}{4r}n$ for every vertex $v\in V(G)$.
By Lemma \ref{lem2.2}, we obtain a reservoir $A\< V(G)$ with
  \stepcounter{propcounter}
\begin{enumerate}[label = ({\bfseries \Alph{propcounter}\arabic{enumi}})]
       \item\label{p1} $2r\zeta n\leq|A|\leq \gamma n$;
       \item\label{p2} $|\mathcal{L}_{A}(v)|\geq \zeta n$ for every $v\in V(G)$;
       \item\label{p3} $|N_A(v)|\geq \left(1-\frac{1}{r}+\frac{\mu}{2}\right)|A|$ for every $v\in V(G)$.
\end{enumerate}
Since $\gamma\ll \mu, \frac{1}{r}$ and $|\mathcal{L}(v)|\geq \frac{\mu}{4r}n$ for every $v\in V(G)$, we can greedily choose an absorber for every $v\in A$ such that they are pairwise vertex disjoint from each other and from $A$.
Since $\gamma \ll \mu, \frac{1}{r}$ and $|A|\leq \gamma n$,
this is possible because during the process, the number of vertices that we need to avoid is at most $(2r+1)|A|\leq (2r+1)\gamma n< \frac{\mu}{4r}n\leq |\mathcal{L}(v)|$.
Let $\mathcal{F}_{1}:=\{H_{1}, H_{2}, \dots, H_{|A|}\}$ be the family of these $r$-paths, each of which is the $r$-path formed by a vertex in $A$ \emph{together with} \footnote{Each $H_i$ has the same ends as the corresponding absorber.} its absorber.
Let $\mf{x}_i$ and $\mf{y}_i$ be two ends of $H_i$ for every $i\in[|A|]$.
Then we shall connect all these $r$-paths $H_i$ in pairs into an $r$-path.
In every step of connecting $H_i$ and $H_{i+1}$, we shall find an $r$-path of length at most $200r^5$ connecting $\mf{y}_{i}$ and $\overleftarrow{\textbf{x}_{i+1}}$, which is vertex disjoint from all previous connections and $V(\mathcal{F}_1)$.
Since $\frac{1}{n}\ll \gamma\ll\mu, \frac{1}{r}$ and $|A|\leq \gamma n$, such a family of $|A|-1$ vertex-disjoint $r$-paths can be iteratively obtained by repeatedly applying Lemma~\ref{connect} to $G-W$ with $W$ being the set of vertices that are in $ V(\mathcal{F}_1)$ or used in previous connections.
After all these connections, we obtain an $r$-path, say $P_{1}$, which contains all the members of $\mathcal{F}_{1}$, and thus $A$.
Since $\gamma \ll \mu, \frac{1}{r}$ and $|A|\leq \gamma n$, $|P_{1}|\leq 200r^{5}(|A|-1)+(2r+1)|A|<(3r+200r^{5})|A|\leq \frac{\mu}{4} n$.
Observer that for every subset $A'\subseteq A$ there is an $r$-path on $V(P_1) \setminus A'$ with the same ends as $P_1$.

Note that we have
$\delta(G-V(P_{1})) \geq \left(1-\frac{1}{r}+\mu\right)n-\frac{\mu}{4} n\geq \left(1-\frac{1}{r}+\frac{3}{4}\mu\right)n$.
Applying Lemma \ref{almost} to $G-V(P_{1})$, we obtain a collection of vertex-disjoint $r$-paths, each of length $\ell$, denoted as $P_{2}, \dots, P_{\lambda}$ for some integer $\lambda\leq\frac{n}{\ell}$.
Moreover, this collection covers all but a set $R$ of at most $\delta n$ vertices in $G-V(P_{1})$.

Recall that by \ref{p2}\ref{p3}, for every $v\in R$, we have $|\mathcal{L}_{A}(v)|\geq \zeta n$ and $|N_A(v)|\geq \left(1-\frac{1}{r}+\frac{\mu}{2}\right)|A|$.
As $\delta\ll\zeta, \frac{1}{r}$ and $|R|\le \delta n$, we can greedily pick an absorber in $G[A]$ for every $v\in R$ such that all such absorbers are pairwise vertex disjoint from each other.
This is possible since each time for $v\in R$ the number of absorbers of $v$ touched in previous steps is at most $2r|R|\leq 2r\delta n<\zeta n \leq|\mathcal{L}_{A}(v)|$.
Let $\mathcal{F}_{2}:=\{H_{|A|+1}, H_{|A|+2}, \dots, H_{|A|+|R|}\}$ be the family of $r$-paths each of which is formed by a vertex in $R$ together with its absorber.

Now we connect all these $r$-paths to an $r$-th power of a (Hamilton) cycle using the vertices of $A$.
In fact, since $\frac{1}{\ell}, \delta\ll\zeta\ll \mu, \frac{1}{r}$, $|A|\geq 2r\zeta n$, $|R|\le \delta n$ and $|N_A(v)|\ge \left(1-\frac{1}{r}+\frac{\mu}{2}\right)|A|$ for every $v\in V(G)$, this can be similarly done by iteratively applying Lemma~\ref{connect} to $G[A]-W$ with $W$ being the set of vertices in $V(\mathcal{F}_2)$ and those used in previous connections, and by the fact that
\begin{center}
$|W|\le (2r+1)|R|+200r^{5}(|R|+\lambda)\leq 300r^5(|R|+\lambda)\leq 300r^5(\delta n+\frac{n}{\ell})\leq \frac{\mu}{4}|A|$.
\end{center}
Let $A'\subseteq A$ be the set of all the vertices used for the connections.
By the observation of $P_1$ as above, we can obtain an $r$-path on $V(P_1)\setminus A'$ with the same ends as $P_1$.
This gives an $r$-th power of a Hamilton cycle of $G$.
\end{proof}

\subsection{Proof of the lemmas}\label{sec4}
In this subsection, we provide the (short) proofs for Lemma \ref{lem2.1}, Lemma \ref{lem2.2}, and Lemma~\ref{almost}.
The proof of Lemma \ref{lem2.1} utilizes some results on the Ramsey--Tur\'{a}n number.
We prove Lemma \ref{lem2.2} using some standard probabilistic arguments.
To prove Lemma \ref{almost}, we apply a recent result of Chen, Han, Wang and Yang in \cite{CHWY2022}.

To prove Lemma \ref{lem2.1}, we need the following notation.
We use $ar(H)$ to denote the \emph{vertex arboricity} of $H$ which is the least integer $r$ such that $V(H)$ can be partitioned into $r$ parts and each part induces a forest in $H$.
The corresponding partition of $V(H)$ is an \emph{acyclic partition} of $H$.
\begin{defn}[\cite{EHSS1983}]\label{def1.5}
The \emph{modified arboricity} $AR(H)$ of a graph $H$ is the least $\ell\in \mathbb{N}$ for which the following holds:
\begin{itemize}
  \item either $\ell$ is even and $ar(H)=\frac{\ell}{2}$;
  \item or $\ell$ is odd and there exists an independent set $V^{\ast}$ such that $ar(H-V^{\ast})\leq \frac{\ell-1}{2}$.
\end{itemize}
\end{defn}
\begin{theorem}[\cite{EHSS1983}]\label{thm0.3}
Let $h, \ell\in \mathbb{N}$ and $H$ be an $h$-vertex graph.
If $AR(H)\leq \ell$, then $\textbf{RT}(n, H, o(n))\leq \textbf{RT}(n, K_{\ell}, o(n))$.
\end{theorem}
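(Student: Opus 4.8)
The plan is to prove the equivalent embedding statement: for every $\eps>0$ there are $\alpha>0$ and $n_0$ such that every $n$-vertex graph $G$ with $n\ge n_0$, $\alpha(G)\le\alpha n$ and $e(G)\ge(\tfrac12\varrho(K_\ell)+\eps)n^2$ contains a copy of $H$. Since $\textbf{RT}(n,K_\ell,o(n))=(\tfrac12\varrho(K_\ell)+o(1))n^2$, this yields $\textbf{RT}(n,H,o(n))\le\textbf{RT}(n,K_\ell,o(n))$ in the limiting sense that ``$o(n)$'' refers to. (The cases $\ell\le 2$, where $H$ is a forest, are immediate: any graph with $\alpha(G)=o(n)$ already contains $H$.) Assume $\ell\ge3$. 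First I would unpack $AR(H)\le\ell$ through Definition~\ref{def1.5}: setting $m:=\lceil\ell/2\rceil$, the hypothesis produces a partition $V(H)=W_1\sqcup\cdots\sqcup W_m$ in which each $H[W_i]$ is a forest, and, when $\ell$ is odd, one part may be taken to be an independent set. (For even $\ell$ this is just $ar(H)\le\ell/2$; for odd $\ell$ it is the independent set $V^\ast$ together with the $\le(\ell-1)/2$ forests of an acyclic partition of $H-V^\ast$, padded to $m$ parts.)

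Next I would apply Szemer\'edi's regularity lemma to $G$ with a small parameter $\eps_0$ and density threshold $d$ chosen so that $\eps_0\ll d\ll\eps,1/h$; this yields clusters $V_1,\dots,V_M$ with $M\le M_0(\eps_0)$, and only then do I choose $\alpha$, so small that $\alpha M_0\ll d^{\,h}$ — whence $\alpha(G[V_i])=o(|V_i|)$ for all $i$. Form the reduced graph $R$ on $[M]$ with $ij\in E(R)$ iff $(V_i,V_j)$ is $\eps_0$-regular of density at least $d$; the usual accounting of irregular, sparse, within-cluster and exceptional-set edges gives $e(R)\ge(\varrho(K_\ell)+\tfrac\eps2)\binom M2$. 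Now I invoke~\eqref{eq:rhoK}: a short computation shows that the Tur\'an density $1-\tfrac1{m-1}$ of $K_m$ equals $\varrho(K_\ell)$ when $\ell$ is odd and is strictly less than $\varrho(K_\ell)$ when $\ell$ is even; in either case $e(R)>\ex(M,K_m)$, so by Tur\'an's theorem (with slack, via supersaturation) $R$ contains $K_m$, say on clusters $C_1,\dots,C_m$, pairwise $\eps_0$-regular of density at least $d$ and each of size $\Omega(n)$.

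I would then embed $H$ by sending $W_a$ into $C_a$ and processing the parts in order. Edges of $H$ running between distinct parts are realised through the regular pairs by the standard embedding-lemma dynamics: each time a vertex is placed it is taken among the ``typical'' candidates, so each already-placed neighbour shrinks a candidate set only by a factor $\ge d-\eps_0$, keeping it linear in the size of its cluster throughout; the independent part (for odd $\ell$) is placed last and costs nothing internally. Edges of $H$ lying \emph{inside} a forest part $W_a$ must instead be realised inside $C_a$, and this is where $\alpha(G[C_a])=o(|C_a|)$ enters: one processes $H[W_a]$ one tree-component at a time, embedding a component $T$ into the common neighbourhood in $C_a$ of the (already placed) cross-neighbours of $T$, a set which is still of linear size and still has sublinear independence number, hence — after discarding vertices of small internal degree, which by a degeneracy/chromatic bound number only $O(h\cdot\alpha(G[C_a]))$ — contains a subgraph of minimum degree $\ge h$ into which the tree embeds greedily.

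The step I expect to be the genuine obstacle is exactly this last one: carrying out the forest embeddings \emph{compatibly} with the regularity-based restrictions. The difficulty is that the only internal structure extractable from $\alpha(G[C_a])=o(|C_a|)$ is very weak — a cluster may, for instance, be a disjoint union of cliques of bounded size, so one cannot hope for a linear-sized set of large minimum degree or a linear clique — whereas the candidate sets cut out by already-placed cross-neighbours have only small (though linear) density inside a cluster, so a vertex placed for the parent of a tree need not have even one neighbour inside a child's candidate set. Overcoming this forces one to process the parts and their tree-components in a carefully chosen order, to work with whole tree-components against common neighbourhoods of bounded families of already-placed vertices, and to exploit that $\alpha$ may be taken minuscule relative to all regularity constants; it is precisely here that the exact value of $\varrho(K_\ell)$ from~\eqref{eq:rhoK} — equivalently, that exactly $m=\lceil\ell/2\rceil$ pairwise-regular clusters are available, and no more — is used. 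This compatible embedding is the technical heart of the argument; the remaining steps are routine.
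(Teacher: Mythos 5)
The theorem you are asked to prove is not proved in the paper at all: it is imported verbatim from \cite{EHSS1983}, and the only hint the paper gives about its proof is the footnote after Lemma~\ref{almost}, which mentions ``the tree-building lemma in \cite{EHSS1983}''. So I can only assess your attempt against what is plausible for the original proof, and there your sketch has a genuine gap, one that you yourself flag but underestimate.

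Your reduction (embed $H$ whenever $e(G)\ge(\tfrac12\varrho(K_\ell)+\eps)n^2$ and $\alpha(G)\le\alpha n$), your application of the regularity lemma, and your computation comparing $\varrho(K_\ell)$ with the Tur\'an density of $K_m$, $m=\lceil\ell/2\rceil$, are all correct. But the plan ``find $K_m$ in the reduced graph, send $W_a$ into $C_a$, realise cross-edges by regularity and internal forest edges by low independence number'' cannot work as stated, and the failure is not merely a matter of ``carrying out the forest embeddings compatibly''. Take $H=K_4$, so $\ell=4$, $m=2$, and the two parts are single edges. The Bollob\'as--Erd\H{o}s graph is a $K_4$-free graph on $n$ vertices with roughly $n^2/8$ edges and independence number $o(n)$; after the regularity lemma its reduced graph has density about $1/4>0=\pi(K_2)$, so $R$ certainly contains a $K_2$, i.e.\ a regular pair $(C_1,C_2)$ of positive density, and $\alpha(G[C_i])=o(|C_i|)$. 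If your embedding scheme worked it would produce a $K_4$ inside this graph, which does not exist. What actually goes wrong is exactly the phenomenon you describe, but it is unfixable by ``processing in a careful order'', ``working with whole tree-components'', or ``taking $\alpha$ minuscule'': for \emph{every} edge $u_1u_2$ inside $C_1$ the common neighbourhood $N(u_1)\cap N(u_2)\cap C_2$ must be sublinear (else low independence forces an edge there and hence a $K_4$), so no edge of $G[C_1]$ is ``jointly typical''. The sublinear-independence cleaning you propose (degeneracy/chromatic bound, min-degree $\ge h$ subgraph) cannot recover this, because it does not interact with the cross-regularity at all. In other words, finding $K_m$ in $R$ via plain Tur\'an uses strictly less than the full strength of $e(R)\ge(\varrho(K_\ell)+\tfrac\eps2)\binom M2$; for even $\ell$ one must extract a richer structure from that excess density --- not just $m$ pairwise-regular clusters, but additionally information about edge densities (or about extra clusters) that makes the internal forest edges embeddable compatibly with the cross-restrictions. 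This is precisely what EHSS's tree-building lemma is for, and the parenthetical in your last paragraph (``equivalently, that exactly $m$ pairwise-regular clusters are available, and no more'') misidentifies the mechanism: the point is not the cardinality of the clique in $R$ but what more than a clique the density surplus buys you. As it stands your argument would prove a statement that is false for $H=K_4$, so the gap is substantive rather than technical.
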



\begin{prop}[]\label{prop3.3}
Given $r\geq 2$, let $H:=P^{r}_{2r}$.
Then $AR(H)=r+1$.
\end{prop}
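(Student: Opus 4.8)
The plan is to read off $ar(H)$ and the modified arboricity $AR(H)$ directly from the structure of $H=P^{r}_{2r}$. Write $V(H)=\{v_{1},\dots,v_{2r}\}$ for the underlying path, so that $v_iv_j\in E(H)$ if and only if $1\le |i-j|\le r$. Two features will be used: for each $i\in[r]$ the set $\{v_i,v_{i+1},\dots,v_{i+r}\}$ induces a copy of $K_{r+1}$, and every independent set of $H$ has at most two vertices (three indices pairwise at distance $\ge r+1$ would span more than $2r-1$). The starting observation for the lower bound is the standard fact that any three vertices of a clique induce a triangle, hence every part of an acyclic partition of a graph $G$ meets a fixed $K_m\subseteq G$ in at most two vertices; this gives $ar(G)\ge \lceil m/2\rceil$, and if $V^{*}$ is independent then $|V^{*}\cap V(K_m)|\le 1$, so $ar(G-V^{*})\ge \lceil (m-1)/2\rceil$.

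For the lower bound $AR(H)\ge r+1$, apply this with $m=r+1$ to get $ar(H)\ge\lceil (r+1)/2\rceil$ and $ar(H-V^{*})\ge\lceil r/2\rceil$ for every independent $V^{*}$. A short parity check on $\ell$ then rules out every $\ell\le r$ in Definition~\ref{def1.5}: if $\ell\le r$ is even we would need $ar(H)=\ell/2\le r/2<\lceil (r+1)/2\rceil$, and if $\ell\le r$ is odd we would need some independent $V^{*}$ with $ar(H-V^{*})\le(\ell-1)/2\le(r-1)/2<\lceil r/2\rceil$ — both impossible. Hence $AR(H)\ge r+1$.

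For the upper bound $AR(H)\le r+1$, the key gadget is that for every admissible $i$ the four-set $B_i:=\{v_i,v_{i+1},v_{i+r+1},v_{i+r+2}\}$ induces a forest: the nonzero index differences among its elements are $1,1,r,r+1,r+1,r+2$, so (since $r\ge 2$) exactly the three pairs at differences $1,1,r$ are edges, giving the path $v_iv_{i+1}v_{i+r+1}v_{i+r+2}$. If $r$ is odd, the $(r-1)/2$ sets $B_i$ with $i\in\{1,3,\dots,r-2\}$ together with the single edge $\{v_r,v_{r+1}\}$ form an acyclic partition of $V(H)$ into $(r+1)/2$ parts (the left pairs tile $\{v_1,\dots,v_{r-1}\}$, the right pairs tile $\{v_{r+2},\dots,v_{2r}\}$, and $\{v_r,v_{r+1}\}$ covers the rest); with the lower bound this gives $ar(H)=(r+1)/2$, so the even clause of Definition~\ref{def1.5} holds with $\ell=r+1$. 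If $r$ is even, set $V^{*}:=\{v_1,v_{r+2}\}$, which is independent since $|(r+2)-1|=r+1>r$; then the $(r/2-1)$ sets $B_i$ with $i\in\{2,4,\dots,r-2\}$ together with $\{v_r,v_{r+1}\}$ form an acyclic partition of $V(H)\setminus V^{*}$ into $r/2$ parts, so $ar(H-V^{*})=r/2=((r+1)-1)/2$ and the odd clause holds with $\ell=r+1$. In both cases $AR(H)\le r+1$, so $AR(H)=r+1$.

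The main obstacle is the upper bound, namely producing the explicit acyclic partitions: one has to discover the ``staircase'' forests $B_i$ and, for even $r$, delete the right independent set. The naive choice $V^{*}=\{v_1,v_{2r}\}$ fails for $r\ge 3$ because it misses the ``middle'' copies $\{v_i,\dots,v_{i+r}\}$ of $K_{r+1}$; the deleted set must in fact be a transversal of all these cliques, which forces the form $\{v_a,v_{a+r+1}\}$, and checking that the surviving $2r-2$ vertices then tile exactly into the $B_i$'s plus the leftover edge $\{v_r,v_{r+1}\}$ is the crux.
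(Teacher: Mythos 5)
Your proof is correct and uses essentially the same approach as the paper: the lower bound comes from the copy of $K_{r+1}$ inside $H$, and the upper bound from an explicit acyclic partition built out of the very same four-vertex ``staircase'' forests $\{v_i,v_{i+1},v_{i+r+1},v_{i+r+2}\}$ that the paper uses, plus the leftover edge $\{v_r,v_{r+1}\}$. The only divergence is cosmetic, in the even-$r$ case: the paper exhibits a partition of all of $H$ into $r/2+1$ parts (one of them the singleton $\{v_{r+1}\}$, which implicitly serves as the independent $V^{\ast}$ in the odd clause of Definition~\ref{def1.5}), whereas you first delete $V^{\ast}=\{v_1,v_{r+2}\}$ and partition the remaining $2r-2$ vertices into $r/2$ parts — both choices are valid and certify $\ell=r+1$ in the odd clause.
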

\begin{proof}
Let $H$ be an $r$-path $P=v_{1}\dots v_{2r}$.
Since $K_{r+1}\<H$, we have $AR(H)\ge AR(K_{r+1})=r+1$.
Assume that $r$ is even, we obtain an acyclic partition of $H$ with $\frac{r}{2}+1$ parts
\[\{v_{1}, v_{2}, v_{r+2}, v_{r+3}\},\{v_{3}, v_{4}, v_{r+4}, v_{r+5}\}, \cdots, \{v_{r-1}, v_{r}, v_{2r}\},\{v_{r+1}\}.\]
Thus, by definition, we obtain that $AR(H)=r+1$.
Assume that $r$ is odd, we obtain an acyclic partition of $H$ with $\frac{r+1}{2}$ parts
\[\{v_{1}, v_{2}, v_{r+2}, v_{r+3}\}, \cdots, \{v_{r-2}, v_{r-1}, v_{2r-1}, v_{2r}\}, \{v_{r}, v_{r+1}\}.\]
Therefore, by definition, we obtain that $AR(H)=r+1$.
\end{proof}
In the next subsection, we prove Lemma \ref{lem2.1}.
\subsubsection{Absorbers: Proof of Lemma \ref{lem2.1}}\label{sec2.2.1}
Before presenting the proof, we first recall some of the results mentioned earlier regarding Ramsey--Tur\'{a}n theory.
In \cite{EST1979}, Erd\H{o}s and S\'{o}s proved that let $r\in 2\mathbb{N}+1$, $\textbf{RT}(n, K_{r}, o(n))=\frac{r-3}{2r-2}n^{2}+o(n^{2})$.
In \cite{EHSS1983}, Erd\H{o}s, Hajnal, S\'{o}s and Szemer\'{e}di proved that
let $r\in 2\mathbb{N}$, $\textbf{RT}(n, K_{r}, o(n))=\frac{3r-10}{6r-8}n^{2}+o(n^{2})$.

\begin{proof} [Proof of Lemma \ref {lem2.1}]
Given $r\in \mathbb{N}$ and $\mu>0$, we choose
$\frac{1}{n}\ll\alpha\ll \mu$.
Recall $f(r)=\frac{r}{2}+1$ if $r\in 2\mathbb{N}$, and $f(r)=\frac{r}{2}+\frac{5}{6}$ if $r\in 2\mathbb{N}+1$.
Let $f:=f(r)$ for every $r\in \mathbb{N}$, $G$ be an $n$-vertex graph with $\delta(G)\geq \left(1-\frac{1}{f}+\mu\right)n$, $W\< V(G)$ with $|W|\leq\frac{\mu}{2}n$ and $v\in V(G)$.
We shall find an absorber of $v$ in $G[N_G(v)\setminus W]$, that is, a copy of $P^{r}_{2r}$.
Write $V'=N_G(v)\setminus W$ and $n'=|V'|$.
Then $n'\ge \frac{f-1}{f}n$.

By Theorem \ref{thm0.3} and Proposition \ref{prop3.3}, it suffices to prove that $e(G[V'])\geq \textbf{RT}(n', K_{r+1}, 2\alpha n')$.
We have $\delta(G[V']) \ge n' - \frac{n}{f} + \mu n\ge n' - \frac{n'}{f-1}  + \mu n'$.
This implies that the density of $G[V']$ is at least $1-\frac{1}{f-1}+\mu$.
By (\ref{eq:rhoK}) and the definition of $f=f(r)$, we infer that $G[V']$ contains a copy of $K_{r+1}$.
This completes the proof.
\end{proof}
\subsubsection{Reservoir: Proof of Lemma \ref{lem2.2}}\label{sec2.2.2}
The proof of Lemma \ref{lem2.2} involves a standard probabilistic argument. In order to carry out this argument, we will need to make use of the following Chernoff bound for the binomial distribution (see Corollary 2.3 in \cite{JSLT2000}). Recall that the binomial random variable with parameters $(n, p)$ is the sum of $n$ independent Bernoulli variables, each taking value $1$ with probability $p$ and $0$ with probability $1-p$.

\begin{prop}[\cite{JSLT2000}]\label{prop6.1}
Suppose that $X$ has the binomial distribution and $0<a<\frac{3}{2}$. Then $\mathbb{P}[|X-\mathbb{E}[X]|\geq a\mathbb{E}[X]]\leq 2\exp(-a^{2}\mathbb{E}[X]/3)$.
\end{prop}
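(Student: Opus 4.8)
The plan is to use the classical Cram\'er--Chernoff exponential moment method, bounding the upper and lower deviations separately and then combining them by a union bound. Write $X=X_{1}+\cdots+X_{N}$ as a sum of independent Bernoulli variables each equal to $1$ with probability $p$, and set $\lambda:=\mathbb{E}[X]=Np$; we may assume $\lambda>0$, as otherwise the statement is trivial. Since
\[
\mathbb{P}\bigl[\,|X-\lambda|\ge a\lambda\,\bigr]\le\mathbb{P}\bigl[X\ge(1+a)\lambda\bigr]+\mathbb{P}\bigl[X\le(1-a)\lambda\bigr],
\]
it suffices to bound each term on the right-hand side by $\exp(-a^{2}\lambda/3)$.

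For the upper tail, fix $t>0$; Markov's inequality applied to $e^{tX}$ together with independence gives
\[
\mathbb{P}\bigl[X\ge(1+a)\lambda\bigr]\le e^{-t(1+a)\lambda}\,\mathbb{E}\bigl[e^{tX}\bigr]=e^{-t(1+a)\lambda}\bigl(1-p+pe^{t}\bigr)^{N}\le e^{-t(1+a)\lambda}\,e^{\lambda(e^{t}-1)},
\]
using $1-p+pe^{t}=1+p(e^{t}-1)\le e^{p(e^{t}-1)}$; optimising the exponent by the choice $t=\ln(1+a)>0$ yields $\mathbb{P}[X\ge(1+a)\lambda]\le\exp(-\lambda\varphi(a))$, where $\varphi(a):=(1+a)\ln(1+a)-a$. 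For the lower tail, if $a\ge1$ the event $\{X\le(1-a)\lambda\}$ is either empty (when $a>1$, as $(1-a)\lambda<0\le X$) or equals $\{X=0\}$ (when $a=1$), and in the latter case $\mathbb{P}[X=0]=(1-p)^{N}\le e^{-Np}\le e^{-a^{2}\lambda/3}$ since $a^{2}/3<3/4<1$; for $0<a<1$, applying Markov's inequality to $e^{-tX}$ with the analogous product bound and choosing $t=-\ln(1-a)>0$ gives $\mathbb{P}[X\le(1-a)\lambda]\le\exp(-\lambda\psi(a))$, where $\psi(a):=a+(1-a)\ln(1-a)$.

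It then remains to verify the elementary inequalities $\varphi(a)\ge a^{2}/3$ for $0\le a<\tfrac{3}{2}$ and $\psi(a)\ge a^{2}/3$ for $0\le a<1$. For $\psi$, the function $\psi(a)-a^{2}/2$ vanishes together with its first derivative at $a=0$ and has second derivative $\tfrac{a}{1-a}\ge0$, hence is nonnegative, so $\psi(a)\ge a^{2}/2\ge a^{2}/3$. For $\varphi$, put $g(a):=\varphi(a)-a^{2}/3$, so that $g(0)=g'(0)=0$ and $g''(a)=\tfrac{1}{1+a}-\tfrac{2}{3}$, which is positive on $[0,\tfrac{1}{2})$ and negative on $(\tfrac{1}{2},\tfrac{3}{2}]$. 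Hence $g'$ first increases and then decreases on $[0,\tfrac{3}{2}]$, and since $g'(0)=0$ it is nonnegative on some initial interval $[0,a_{0}]$ and nonpositive on $[a_{0},\tfrac{3}{2}]$; therefore $g$ attains its minimum over $[0,\tfrac{3}{2}]$ at an endpoint, and as $g(0)=0$ and $g(\tfrac{3}{2})=\tfrac{5}{2}\ln\tfrac{5}{2}-\tfrac{9}{4}>0$ we conclude $g\ge0$ on $[0,\tfrac{3}{2}]$. Substituting these bounds into the two tail estimates and summing gives $\mathbb{P}[|X-\lambda|\ge a\lambda]\le2\exp(-a^{2}\lambda/3)$, as claimed.

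The only step that is not a routine optimisation is the bound $\varphi(a)\ge a^{2}/3$ over the whole range $[0,\tfrac{3}{2})$: since $g''$ changes sign, $g$ is not convex and a single monotonicity argument (as for $\psi$) does not suffice, so one uses instead that $g'$ changes sign at most once to reduce the verification to the two endpoints of the interval. This is precisely where the weaker constant $1/3$ (compared with the $1/2$ available for the lower tail) and the restriction $a<\tfrac{3}{2}$ enter.
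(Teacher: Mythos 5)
Your proof is correct. Note that the paper itself offers no argument for this statement: it is quoted directly as Corollary 2.3 of the cited book of Janson, {\L}uczak and Ruci\'nski, so there is nothing in the paper to compare against beyond the citation. What you have written is the standard Cram\'er--Chernoff derivation that underlies that corollary: exponential Markov bounds for the two tails with the optimal choices $t=\ln(1+a)$ and $t=-\ln(1-a)$, giving the rate functions $\varphi(a)=(1+a)\ln(1+a)-a$ and $\psi(a)=a+(1-a)\ln(1-a)$, followed by the elementary comparisons $\psi(a)\ge a^2/2$ and $\varphi(a)\ge a^2/3$ on $[0,\tfrac32]$ and a union bound. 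All the individual steps check out, including the degenerate cases ($\mathbb{E}[X]=0$, and $a\ge 1$ for the lower tail, where the event is empty or $\{X=0\}$), and your handling of the one genuinely non-routine point --- that $g(a)=\varphi(a)-a^2/3$ is not convex, so you argue via the single sign change of $g'$ that the minimum on $[0,\tfrac32]$ is attained at an endpoint, with $g(0)=0$ and $g(\tfrac32)=\tfrac52\ln\tfrac52-\tfrac94>0$ --- is sound and is exactly where the constant $\tfrac13$ and the restriction $a<\tfrac32$ are needed.
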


\begin{proof} [Proof of Lemma \ref {lem2.2}]
Given $c, \mu, \eta, \gamma>0$, we choose
$\frac{1}{n}\ll \eta, \mu, \gamma, c$ and let $c_{1}=c_{2}=\frac{\mu}{4}$.
Let $G$ be an $n$-vertex graph with $\delta(G)\geq \left(c+\mu\right)n$, and every $v\in V(G)$ has a set $\mathcal{L}(v)$ of at least $\eta n$ vertex-disjoint absorbers.
We first note that $c+\mu\leq 1$.
We choose a vertex subset $A\< V(G)$ by including every vertex independently at random with probability $p=\frac{\gamma}{2}$.
Notice that $\mathbb{E}[|A|]=np$, $\mathbb{E}[|\mathcal{L}_{A}(v)|]\ge\eta np^{2r}$ and $\mathbb{E}[d_A(v)]\ge\left(c+\mu\right)np$ for every $v\in V(G)$.
Then by Proposition \ref{prop6.1}, it holds that:
\begin{enumerate}
  \item $\mathbb{P}[|A|\geq(1+c_{1})\mathbb{E}[|A|]]\leq \mathbb{P}[||A|-\mathbb{E}[|A|]|\geq c_{1}\mathbb{E}[|A|]]\leq 2\exp\left(-c^{2}_{1}np/3\right)$;
  \item $\mathbb{P}[|\mathcal{L}_{A}(v)|\leq \frac{1}{2}\mathbb{E}[|\mathcal{L}_{A}(v)|]]\leq 2\exp\left(-\eta np^{2r}/12\right)$;
  \item $\mathbb{P}[d_A(v)\leq (1-c_{2})\mathbb{E}[d_A(v)]]\leq 2\exp\left(-c^{2}_{2}(c+\mu)np/3\right)$.
\end{enumerate}
Meanwhile, it holds that
\begin{center}
$\exp\left(-c^{2}_{1}np/3\right)+n\exp\left(-c^{2}_{2}(1-\frac{1}{r}+\mu)np/3\right)+n\exp\left(-\eta np^{2r}/12\right)\leq\frac{1}{6}$.
\end{center}
By the union bound, with probability at least $\frac{2}{3}$ the set $A$ satisfies the following properties:
  \[ |A| \leq (1+\frac{\mu}{4})\mathbb{E}[|A|]=(1+\frac{\mu}{4})np=(1+\frac{\mu}{4})\frac{\gamma}{2}n\leq \gamma n;\]
 \[ |\mathcal{L}_{A}(v)| \geq \frac{1}{2}\mathbb{E}[|\mathcal{L}_{A}(v)|]=\frac{\eta p^{2r}}{2}n=\frac{\eta \gamma^{2r}}{2^{2r+1}}n;\]
 \[d_A(v) \geq (1-\frac{\mu}{4})\left(c+\mu\right)np
    \geq \left(c+\frac{\mu}{2}\right)(1+\frac{\mu}{4})np\geq \left(c+\frac{\mu}{2}\right)|A|,\]
where the second inequality holds since $c+\mu\leq 1$.
Hence, $A$ is as desired by taking $\zeta=\frac{\eta\gamma^{2r}}{2^{2r+1}}$.
The lower bound of $|A|$ follows from $|\mathcal{L}_{A}(v)|\geq \zeta n$, thus $|A|\geq 2r\zeta n$.
\end{proof}

\subsubsection{Almost path-cover: Proof of Lemma~\ref{almost}}\label{sec5}
To prove Lemma \ref{almost}, we employ a recent result of Chen, Han, Wang, and Yang~\cite{CHWY2022}. For simplicity of presentation, we state a weaker version of their result as follows.
\begin{lemma}[\cite{CHWY2022}, Lemma 3.1]\label{lem4.1}
Given $\mu, \delta>0$, an $h$-vertex graph $H$ with $h\in \mathbb{N}$ and $h\geq 3$, there exists $\alpha>0$ such that the following holds for sufficiently large $n$.
Let $G$ be an $n$-vertex graph with $\delta(G)\geq \max\left\{\left(1-\frac{1}{ar(H)}+\mu\right)n, \left(\frac{1}{2}+\mu\right)n\right\}$ and $\alpha(G)\leq \alpha n$.
Then $G$ contains an $H$-tiling which covers all but at most $\delta n$ vertices.
\end{lemma}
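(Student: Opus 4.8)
The plan is to prove Lemma~\ref{lem4.1} by the regularity method, using the sublinear independence number to control the behaviour inside the clusters. Write $a:=ar(H)$ and fix an \emph{acyclic partition} $V(H)=U_1\cup\cdots\cup U_a$ in which each $H[U_j]$ is a forest. Choose constants $\tfrac1n\ll\alpha\ll\tfrac1{M_0}\ll\epsilon\ll d\ll\mu,\tfrac1h$, where $M_0$ bounds the number of clusters produced by the degree form of Szemer\'edi's Regularity Lemma for the parameter $\epsilon$. Applying the Regularity Lemma to $G$ gives a partition $V_0\cup V_1\cup\cdots\cup V_t$ with $|V_0|\le\epsilon n$ and clusters of common size $m$, together with the reduced graph $R$ on $[t]$ in which $ij\in E(R)$ iff $(V_i,V_j)$ is $\epsilon$-regular of density at least $d$; the inherited minimum degree is $\delta(R)\ge(\tfrac{\delta(G)}{n}-d-\epsilon)t\ge\max\{(1-\tfrac1a+\tfrac\mu2)t,(\tfrac12+\tfrac\mu2)t\}$. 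I will also use the elementary observation that, since $\alpha(G[V_i])\le\alpha(G)\le\alpha n\ll m$, the graph $G[V_i]$ (and every induced subgraph of it on $\ge\beta m$ vertices, for any fixed $\beta$) has a subgraph of minimum degree at least $h$: iteratively deleting vertices of degree $<h$ removes a $(h-1)$-degenerate hence $h$-colourable set, which therefore has size at most $h\cdot\alpha(G[V_i])\ll m$. Consequently $G[V_i]$ contains a collection of vertex-disjoint copies of any fixed forest $F$ covering all but at most $\beta m$ of its vertices (greedily extract copies of $F$, which exist as long as $\ge\beta m$ vertices remain).

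Next, since $\delta(R)\ge(1-\tfrac1a)t$, the Hajnal--Szemer\'edi theorem (in the ``almost perfect'' form) yields vertex-disjoint copies $Q_1,\dots,Q_p$ of $K_a$ in $R$ covering all but fewer than $a$ vertices of $R$; for $a=1$ this step is vacuous and the conclusion already follows from the forest-tiling observation applied to $G$ itself. Fix one $Q_s$, on clusters $W_1,\dots,W_a$, all $\binom a2$ pairs among them being $\epsilon$-regular of density at least $d$. It suffices to pack into $G[W_1\cup\cdots\cup W_a]$ vertex-disjoint copies of $H$ covering all but at most $\beta m$ vertices of each $W_k$ (with $\beta\ll\delta$): summing over $s$ and adding $|V_0|$ and the fewer than $a$ uncovered clusters then leaves at most $\epsilon n+\tfrac{a}{t}n+\beta n\le\delta n$ uncovered vertices of $G$. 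Because the forests $H[U_j]$ may have very different orders while all $W_k$ have size $m$, I will process $a$ batches indexed by cyclic rotations $\rho\in\mathbb Z_a$: in batch $\rho$ one embeds about $(1-\tfrac\beta2)m/h$ vertex-disjoint copies of $H$ in which $U_j$ maps into $W_{j+\rho}$. Across the $a$ batches each cluster then hosts each $H[U_j]$ roughly $(1-\tfrac\beta2)m/h$ times, using in total about $(1-\tfrac\beta2)m$ of its vertices, so almost all of $W_1\cup\cdots\cup W_a$ is covered. (Since the lemma is only a non-tight ``weaker version'', this wasteful rotation is affordable; a sharp statement would instead use a critical-arboricity parameter.)

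The work is in the batch embedding, which I would do by a blow-up-type argument coupled with the cluster structure. The edges of $H$ between $U_j$ and $U_{j'}$ must be realized inside the $\epsilon$-regular pair $(W_{j+\rho},W_{j'+\rho})$, while the edges of $H$ inside a single $U_j$ form a bounded-degree, $1$-degenerate forest that must be realized inside one cluster. For the cross-cluster part I would pass to super-regular sub-clusters by discarding the $O(\epsilon m)$ vertices of each $W_k$ that are atypical for some regular pair it participates in; this preserves the sublinear independence number, hence also the forest-tiling property. Inside each trimmed cluster I then extract disjoint copies of the single forest $F:=H[U_1]\cup\cdots\cup H[U_a]$ covering all but $\le\beta m/2$ vertices; each such ``block'' carries a marked sub-copy of every $H[U_j]$. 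It remains, for each batch $\rho$, to assign the blocks in $W_{1+\rho},\dots,W_{a+\rho}$ into transversals whose marked $H[U_j]$-parts, together with the regular-pair edges, form copies of $H$; this is a near-perfect matching problem in an auxiliary $a$-partite $a$-uniform hypergraph whose density I would establish via the counting lemma for the regular pairs, solved by a defect-Hall argument (or by running the blow-up embedding directly, with the forest blocks already fixed as the images).

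I expect the main obstacle to be exactly this coupling: a regular pair controls neighbourhoods only up to a small constant fraction of a cluster, whereas an almost-perfect packing exhausts almost all of each cluster, so the forest blocks must be chosen in advance to be \emph{mutually typical} with respect to all the regular pairs before the transversal/blow-up step is run — and one must check that the typical vertices still support a near-perfect $F$-tiling (they do, since they form a $(1-O(\epsilon))$-fraction of the cluster and the sublinear independence number is inherited). The rest is the routine but essential bookkeeping of nesting $\alpha\ll\tfrac1{M_0}\ll\epsilon\ll d\ll\beta\ll\delta\ll\mu,\tfrac1h$ so that all error terms accumulate to less than $\delta n$.
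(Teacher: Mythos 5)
First, note that the paper does not prove this statement at all: Lemma~\ref{lem4.1} is imported verbatim from \cite{CHWY2022} (Lemma 3.1), and the paper only sketches, in a footnote, the route you chose (regularity lemma, Hajnal--Szemer\'edi in the reduced graph, and then ``involved embedding techniques'' such as the tree-building lemma of \cite{EHSS1983} to handle the within-cluster forests under $\alpha(G)=o(n)$). So your high-level architecture — acyclic partition $U_1\cup\cdots\cup U_a$, almost $K_a$-factor of the reduced graph, rotation over the $a$ clusters to balance the part sizes, and the degeneracy argument showing any large subset of a cluster contains every fixed forest — matches that sketch, and those parts of your write-up are sound.

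The genuine gap is in your final step: pre-extracting a near-perfect tiling of each cluster by ``blocks'' (copies of $F=H[U_1]\cup\cdots\cup H[U_a]$) using only the within-cluster graph, and then matching blocks across the $K_a$ into copies of $H$ via ``a near-perfect matching problem in an auxiliary hypergraph whose density I would establish via the counting lemma.'' The counting lemma says nothing here: it counts copies of $H$ whose vertices range over \emph{linear-size} sets, whereas compatibility of two pre-fixed constant-size blocks is a block-to-block condition that $\eps$-regularity does not control. Concretely, take $a=2$ with $H[U_2]$ a single edge (or two independent vertices, as for $H=C_4$): start from a random bipartite pair $(W_1,W_2)$ of density $d$, fix your block partition of $W_2$ into pairs $\{y,y'\}$, and delete from each $y'$ its edges into $N(y)$; the pair remains $\eps$-regular with density close to $d$, yet \emph{no} block of $W_2$ has a common neighbour in $W_1$, so the auxiliary compatibility hypergraph on your pre-fixed blocks is empty. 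Your ``mutual typicality'' remark does not repair this, because typicality is a vertex-to-large-set notion, and enforcing it while extracting a block collides with the within-cluster forest constraint (a forest vertex must be a cluster-neighbour of its already-embedded parent, whose within-cluster neighbourhood may be tiny and entirely atypical — small independence number gives no minimum-degree control). The way \cite{CHWY2022} (and \cite{EHSS1983}, and indeed this paper's own Lemmas~\ref{lem6.30} and~\ref{lem6.31} in the path case) avoids this is by reversing the order of quantifiers: copies of $H$ are embedded one at a time, and each forest $H[U_j]$ is found \emph{inside the current common-neighbourhood candidate set} determined by the already-embedded cross-cluster vertices (kept of size $\Omega(m)\gg\alpha n$ by choosing those vertices typically), so that the sublinear independence number is invoked only inside a single large set and no two independently chosen blocks ever need to be compatible. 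Without such an interleaved embedding lemma, your proof does not go through.
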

%
In fact, Lemma \ref{almost} can be easily derived by applying Lemma \ref{lem4.1} with $H=P^r_{\ell}$, and making the trivial observation that $ar(P^r_{\ell})=g(r)$.
\footnote{Here, we remark that the proof of Lemma \ref{almost} also can be derived by combining a standard application of the powerful Szemer\'{e}di's regularity lemma and some involved embedding techniques as in \cite{CHWY2022}. In fact, the degree condition $(1-\tfrac{1}{r}+\mu )n$ enables us to apply the Hajnal--Szemer\'{e}di theorem and obtain an almost $K_{r}$-factor in the reduced graph. Then we suitably embed vertex-disjoint copies of $P^r_{\ell}$ into every collection of $r$ clusters that form a copy of $K_r$ as above, where one need some involved embedding techniques in the setting that $\alpha(G)=o(n)$ (e.g. the tree-building lemma in \cite{EHSS1983}).
}

\section{Connecting $r$-tuples}\label{sec6}
In this section we shall focus on Lemma~\ref{connect}, which allows us to ``connect'' two copies of $K_r$ (ordered) via an $r$-path of a constant length.
Following a standard application of the regularity lemma, we obtain an $\eps$-regular partition and then build a \emph{reduced graph} $R$ (see Definition~\ref{def2.7}).
We shall first use Lemma~\ref{lem6.2} to obtain a \emph{good walk} (See Definition \ref{def6.0}) in $R$ and then use some involved embedding techniques (see Lemma \ref{lem6.30} and Lemma \ref{lem6.31}) to finish the connection.
\subsection{Regularity}
In this paper, we make use of a degree form of the regularity lemma \cite{KS1996}.
We shall first introduce a basic notion of regular pair.
Given a graph $G$ and a pair $(V_{1}, V_{2})$ of vertex-disjoint subsets in $V(G)$, the \emph{density} of $(V_{1}, V_{2})$ is defined as

\begin{center}
$d(V_{1}, V_{2})=\frac{e(V_{1}, V_{2})}{|V_{1}||V_{2}|}$.\\
\end{center}

\begin{defn}[]\label{def2.1}
Given $\eps>0$, a graph $G$ and a pair $(V_{1}, V_{2})$ of vertex-disjoint subsets in $V(G)$, we say that the pair $(V_1, V_2)$ is $\eps $-\emph{regular} if for all $X\subseteq V_{1}$ and $Y\subseteq V_{2}$ satisfying
\[
|X| \ge \eps |V_{1}| ~\text{and}~  |Y| \ge \eps |V_{2}|,
\]
we have
\[
|d(X,Y) - d(V_1,V_2)|  \le  \eps.
\]
Moreover, a pair $(X, Y)$ is called $(\eps, \beta)$-\emph{regular} if $(X, Y)$ is $\eps$-regular and $d(X, Y)\geq \beta$.
\end{defn}

\begin{lemma}[\cite{KS1996}, Slicing Lemma]\label{lem2.5}
Assume $(V_{1}, V_{2})$ is $\eps$-regular with density $d$.
For some $\alpha\geq \eps$, let $V_{1}'\subseteq V_{1}$ with $|V_{1}'|\geq \alpha|V_{1}|$ and $V_{2}'\subseteq V_{2}$ with $|V_{2}'|\geq \alpha|V_{2}|$.
Then $(V_{1}', V_{2}')$ is $\eps'$-regular with $\eps':=\max\left\{2\eps, \eps/\alpha\right\}$ and for its density $d'$ we have $|d'-d|<\eps$.
\end{lemma}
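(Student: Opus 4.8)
The plan is to verify both conclusions directly from Definition~\ref{def2.1} applied to the original pair $(V_1,V_2)$, together with the triangle inequality; the only real content is that the choice $\eps'=\max\{2\eps,\eps/\alpha\}$ is tailored to play two roles simultaneously. I would begin with the density estimate. Since $|V_1'|\ge\alpha|V_1|\ge\eps|V_1|$ and likewise $|V_2'|\ge\eps|V_2|$, the pair $(V_1',V_2')$ is itself an admissible test pair in the definition of $\eps$-regularity of $(V_1,V_2)$, so $|d'-d|=|d(V_1',V_2')-d(V_1,V_2)|\le\eps$, which already gives the claimed bound on $|d'-d|$.

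Next I would check the regularity of $(V_1',V_2')$. Fix arbitrary $X\subseteq V_1'$ and $Y\subseteq V_2'$ with $|X|\ge\eps'|V_1'|$ and $|Y|\ge\eps'|V_2'|$. The point of requiring $\eps'\ge\eps/\alpha$ is that an $\eps'$-fraction of a set that was shrunk by a factor $\alpha$ is still an $\eps$-fraction of the original set:
\[
|X|\ge\eps'|V_1'|\ge\frac{\eps}{\alpha}\cdot\alpha|V_1|=\eps|V_1|,
\]
and symmetrically $|Y|\ge\eps|V_2|$. Hence $X$ and $Y$ are admissible test sets for the $\eps$-regularity of $(V_1,V_2)$, giving $|d(X,Y)-d|\le\eps$. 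Combining this with $|d-d'|\le\eps$ from the first paragraph and using the triangle inequality,
\[
|d(X,Y)-d'|\le|d(X,Y)-d|+|d-d'|\le 2\eps\le\eps',
\]
where the last step uses $\eps'\ge2\eps$. This is exactly the $\eps'$-regularity condition for $(V_1',V_2')$, so the proof is complete.

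I do not expect any genuine obstacle: the whole argument is a two-line computation plus the triangle inequality, and the subtlety is purely in the bookkeeping of constants — $\eps'$ must be at least $\eps/\alpha$ so that test sets transfer back to $(V_1,V_2)$, and at least $2\eps$ so that the two $\eps$-errors from the triangle inequality are absorbed. (One minor point: Definition~\ref{def2.1} yields the non-strict bound $|d'-d|\le\eps$; the strict inequality as stated is harmless, since it holds verbatim whenever $(V_1,V_2)$ satisfies a strict density-deviation bound, and otherwise one may replace $\eps'$ by any marginally larger constant without affecting later applications.)
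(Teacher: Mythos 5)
Your proof is correct and is the standard argument for the Slicing Lemma; the paper itself gives no proof, citing \cite{KS1996} for the statement, so there is nothing to compare against beyond the textbook route you follow. Both halves are handled cleanly: $\eps'\geq\eps/\alpha$ pushes test sets in the shrunken pair up to admissible test sets in the original pair, and $\eps'\geq 2\eps$ absorbs the two $\eps$-errors from the triangle inequality. Your side remark about the strict versus non-strict inequality is the right observation — Definition~\ref{def2.1} as written gives $|d'-d|\leq\eps$, not $<\eps$, and this cosmetic mismatch (a common artefact of different regularity conventions) has no effect on any application in the paper.
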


\begin{lemma}[\cite{KS1996}, Degree form of Regularity Lemma]\label{lem2.6}
For every $\eps  > 0$, there is an $N = N(\eps )$ such that the following holds for any real number $\beta\in [0, 1]$ and $n\in \mathbb{N}$.
Let $G$ be a graph with $n$ vertices.
Then there exists a partition $V(G)=V_{0}\cup \cdots \cup V_{k} $ and a spanning subgraph $G' \subseteq G$ with the following properties:
\begin{enumerate}

         \item [$({\rm 1})$] $ \frac{1}{\eps}\leq k \le N $;

         \item [$({\rm 2})$] $|V_{i}| \le \eps  n$ for $i\in [0, k]$ and $|V_{1}|=|V_{2}|=\cdots=|V_{k}| =m$ for some $m\in \mathbb{N}$;

         \item [$({\rm 3})$] $d_{G'}(v) > d_{G}(v) - (\beta + \eps )n$ for all $v \in V(G)$;

         \item [$({\rm 4})$] each $V_{i}$ is an independent set in $G' $ for $i\in [k]$;

         \item [$({\rm 5})$] all pairs $(V_{i}, V_{j})$ are $\eps $-regular (in $G'$) with density 0 or at least $\beta$ for distinct $i, j\neq0$.

\end{enumerate}
\end{lemma}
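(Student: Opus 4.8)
The plan is to deduce this ``degree form'' from the original Szemer\'edi Regularity Lemma, namely: for every $\varepsilon_1>0$ and $m_0\in\mathbb N$ there is $M=M(\varepsilon_1,m_0)$ such that every graph on $n$ vertices admits an $\varepsilon_1$-regular equipartition $V(G)=V_0'\cup V_1'\cup\cdots\cup V_\ell'$ with $m_0\le\ell\le M$, $|V_0'|\le\varepsilon_1 n$, $|V_1'|=\cdots=|V_\ell'|=:m'$, and at most $\varepsilon_1\binom{\ell}{2}$ pairs failing to be $\varepsilon_1$-regular. Given $\varepsilon$, I would fix $\varepsilon_1\ll\varepsilon$, set $m_0:=\lceil 1/\varepsilon_1\rceil$ and $N(\varepsilon):=M(\varepsilon_1,m_0)$, and apply this to the given $G$; the density parameter $\beta$ plays no role at this stage, and note $m'\le n/\ell\le\varepsilon_1 n$ automatically. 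The $\beta n$ term in conclusion (3) will ultimately come from deleting pairs of density below $\beta$, which is why it cannot be absorbed into the error term.

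The refined partition is produced by a three-move clean-up, each move only enlarging the exceptional set. First, call a cluster $V_i'$ \emph{bad} if it lies in more than $\sqrt{\varepsilon_1}\,\ell$ irregular pairs; since $\sum_i(\#\text{irregular pairs at }V_i')\le\varepsilon_1\ell^2$, there are at most $\sqrt{\varepsilon_1}\,\ell$ bad clusters, and I move their vertices into the exceptional set. Second, for each ordered pair $(V_i',V_j')$ that is $\varepsilon_1$-regular, call $v\in V_i'$ \emph{atypical for $j$} if $d_{V_j'}(v)>(d(V_i',V_j')+\varepsilon_1)m'$; regularity forces fewer than $\varepsilon_1 m'$ atypical vertices in each such pair (else that set together with $V_j'$ witnesses a density deviation exceeding $\varepsilon_1$), so summing over $j$ shows each $V_i'$ contains at most $\sqrt{\varepsilon_1}\,m'$ vertices that are atypical for more than $\sqrt{\varepsilon_1}\,\ell$ indices $j$; I move all these \emph{bad} vertices into the exceptional set. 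Third, I trim every surviving cluster to the common size $m:=\lfloor(1-\sqrt{\varepsilon_1})m'\rfloor$ (each still has at least this many vertices after moves one and two), moving the excess into the exceptional set; this yields a partition $V_0,V_1,\dots,V_k$, with $k$ the number of surviving clusters. Finally I define $G'$ from $G$ by deleting \emph{only} the edges inside some $V_i$ with $i\in[k]$ and the edges of every pair $(V_i,V_j)$, $i\ne j\in[k]$, whose original pair $(V_i',V_j')$ was not $\varepsilon_1$-regular or had density $<\beta+\varepsilon_1$; in particular every edge of $G$ incident to $V_0$ is retained.

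Conclusions (1), (2), (4) are then routine: $k\le\ell\le N$ and $k\ge(1-\sqrt{\varepsilon_1})\ell\ge(1-\sqrt{\varepsilon_1})/\varepsilon_1\ge1/\varepsilon$; each $|V_i|\le m'\le\varepsilon_1 n\le\varepsilon n$, while $|V_0|$ grew in total by at most $4\sqrt{\varepsilon_1}n+N\le\varepsilon n$; and each $V_i$ ($i\ge1$) is independent in $G'$ by construction. For (5), the Slicing Lemma (Lemma~\ref{lem2.5}) says a surviving pair is an $\varepsilon_1$-regular pair restricted to subsets of relative size at least $1-2\sqrt{\varepsilon_1}$, hence $2\varepsilon_1$-regular (so $\varepsilon$-regular) with density within $\varepsilon_1$ of the original; thus a retained pair has density at least $\beta$ in $G'$, while a deleted pair has density $0$, which is $\varepsilon$-regular trivially.

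The heart of the matter is (3), and it is also where I expect the only genuine difficulty. For $v\in V_0$ no edge at $v$ was deleted, so $d_{G'}(v)=d_G(v)$. For $v\in V_i$ with $i\ge1$ we know $V_i$ is not a bad cluster and $v$ is not a bad vertex, and I bound the edges of $G$ at $v$ that were deleted: at most $m'\le\varepsilon_1 n$ inside $V_i$; at most $\sqrt{\varepsilon_1}\,\ell\cdot m'\le\sqrt{\varepsilon_1}\,n$ to the $\le\sqrt{\varepsilon_1}\,\ell$ clusters $V_j$ with $(V_i',V_j')$ irregular; and among the deleted regular pairs $(V_i',V_j')$ (density $<\beta+\varepsilon_1$), $v$ is atypical for at most $\sqrt{\varepsilon_1}\,\ell$ of them, costing at most $\sqrt{\varepsilon_1}\,\ell\cdot m'\le\sqrt{\varepsilon_1}\,n$ edges, while for every other such $j$ it has at most $(d(V_i',V_j')+\varepsilon_1)m'<(\beta+2\varepsilon_1)m'$ neighbours in $V_j\subseteq V_j'$, totalling at most $(\beta+2\varepsilon_1)\ell m'\le(\beta+2\varepsilon_1)n$. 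Summing gives at most $(\beta+3\varepsilon_1+2\sqrt{\varepsilon_1})n<(\beta+\varepsilon)n$ deleted edges at $v$, i.e.\ $d_{G'}(v)>d_G(v)-(\beta+\varepsilon)n$. The delicate point — and the reason one cannot simply delete all irregular and low-density pairs from the plain equipartition — is that this estimate must hold for \emph{every} vertex, not merely on average; this is exactly what forces the two notions of ``bad'' to be defined relative to the per-cluster totals, so that only a $\sqrt{\varepsilon_1}$-fraction of each cluster is ever discarded, the common size $m$ stays within a $\sqrt{\varepsilon_1}$-factor of $m'$, and both $|V_0|\le\varepsilon n$ and the applicability of the Slicing Lemma are preserved.
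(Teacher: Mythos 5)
The paper does not actually prove Lemma \ref{lem2.6} --- it is quoted directly from \cite{KS1996} --- and your derivation from the plain form of Szemer\'edi's Regularity Lemma (discard clusters lying in many irregular pairs, discard the per-cluster atypical vertices, trim to a common size, then delete edges inside clusters and across irregular or low-density pairs, checking $(3)$ vertex-by-vertex and $(5)$ via the Slicing Lemma \ref{lem2.5}) is precisely the standard argument behind the cited degree form, and it is correct. The only caveat is cosmetic: your estimate $|V_0|\le \varepsilon_1 n+4\sqrt{\varepsilon_1}\,n+N\le \varepsilon n$ (and likewise $m'$ being large enough for the slicing step) requires $n$ to be large compared with $N(\varepsilon)$, which is the only regime in which the lemma is ever applied, but is worth stating explicitly since the lemma is phrased for all $n\in\mathbb{N}$.
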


More often, we call the partition in Lemma~\ref{lem2.6} an $(\eps,\beta)$-\emph{regular} partition. A widely-used auxiliary graph accompanied with the regular partition is the reduced graph.

\begin{defn}[Reduced graph]\label{def2.7}
Let $k\in \mathbb{N}$, $\beta, \eps>0$, $G$ be a graph with an $(\varepsilon, \beta)$-regular partition $V(G)=V_0\cup \ldots \cup V_k$ and $G'\subseteq G$ be a subgraph fulfilling the properties of Lemma \ref{lem2.6}.
We denote by $R_{\eps, \beta}$ the \emph{reduced graph} for the $(\eps,\beta)$-regular partition, which is defined as follows.
Let $V(R_{\eps, \beta})=\{V_{1}, \ldots, V_{k}\}$ and for two distinct clusters $V_{i}$ and $V_{j}$ we draw an edge between $V_{i}$ and $V_{j}$ if $d_{G'}(V_i, V_j)\geq \beta$ and no edge otherwise.
\end{defn}

The following fact presents a minimum degree of the reduced graph.
\begin{fac}[]\label{fact2.8}
Let $n\in \mathbb{N}$, $\eps, \beta, c\in(0,1)$ and $G$ be an $n$-vertex graph with $\delta(G)\ge cn$. Let $V(G)=V_{0}\cup \cdots \cup V_{k}$ be a vertex partition of $V(G)$ satisfying Lemma \ref{lem2.6} $(1)$-$(5)$ and $R:=R_{\eps, \beta}$ the reduced graph for the partition.
Then for every $V_{i}\in V(R)$ we have $d_{R}(V_{i})\ge(c-2\eps-\be)k$.
\end{fac}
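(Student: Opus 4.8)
The plan is to bound from below the $G'$-degree of a vertex in a cluster and then count how many clusters it can have neighbours in. Fix a cluster $V_i\in V(R)$ and pick any vertex $v\in V_i$. By property $(3)$ of Lemma~\ref{lem2.6}, $d_{G'}(v) > d_G(v) - (\beta+\eps)n \ge (c-\beta-\eps)n$, using $\delta(G)\ge cn$. So $v$ has more than $(c-\beta-\eps)n$ neighbours in $G'$.

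Next I would localise these neighbours to the clusters $V_1,\dots,V_k$. At most $|V_0|\le \eps n$ of them lie in $V_0$, and by property $(4)$ none of them lie in $V_i$ itself. Hence $v$ has at least $(c-\beta-\eps)n - \eps n = (c-\beta-2\eps)n$ neighbours (in $G'$) spread among the clusters $V_j$ with $j\neq i$. Each such cluster has size $m \le \eps n \le n/k$ (in fact $m=n/k$ up to the $V_0$-slack; using $|V_j|\le \eps n$ or simply $m\le n/k$ both work), so the number of clusters $V_j$ that receive at least one $G'$-neighbour of $v$ is at least $(c-\beta-2\eps)n / m \ge (c-\beta-2\eps)n / (n/k) = (c-\beta-2\eps)k$.

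Finally I would observe that whenever $v\in V_i$ has a $G'$-neighbour in $V_j$, the pair $(V_i,V_j)$ has positive density in $G'$, so by property $(5)$ its density is at least $\beta$, which by Definition~\ref{def2.7} means $V_iV_j\in E(R)$. Therefore $d_R(V_i)$ is at least the number of such clusters, giving $d_R(V_i)\ge (c-\beta-2\eps)k = (c-2\eps-\beta)k$, as required.

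There is no real obstacle here; the only mild point of care is the bookkeeping between $m$, $n/k$ and the $|V_0|\le\eps n$ error term, and making sure the ``$+1$ neighbour forces an $R$-edge'' step cites property $(5)$ correctly (density $0$ or $\ge\beta$, never strictly between). These are routine.
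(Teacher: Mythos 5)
Your argument is correct and is essentially the paper's argument phrased per-vertex rather than via a total edge count: the paper sums $d_{G'}(v)-\eps n$ over all $v\in V_i$ and divides by $m^2$, whereas you fix one $v\in V_i$ and divide by $m$; both rest on properties $(3)$, $(4)$, $(5)$ in the same way and both ultimately use $m\le n/k$ (from $km+|V_0|=n$).

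One small slip in your parenthetical: the chain $m\le\eps n\le n/k$ is wrong in the second step. Property $(1)$ gives $k\ge 1/\eps$, hence $n/k\le\eps n$, not the reverse. Both $m\le\eps n$ and $m\le n/k$ are individually true, but they do not form a chain, and only $m\le n/k$ is of any use here. In fact the alternative you float, bounding by $|V_j|\le\eps n$, would give $(c-\beta-2\eps)n/m\ge(c-\beta-2\eps)/\eps$, and since $1/\eps\le k$ this is \emph{weaker} than the target $(c-\beta-2\eps)k$, so it does not recover the stated bound. Your displayed computation correctly uses $m\le n/k$, so the proof goes through; just delete or fix the parenthetical remark.
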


\begin{proof}[Proof] Note that $|V_{0}|\leq \eps n$ and $|V_{i}|=m$ for every $i\in [k]$. Every edge in $R$ represents less than $m^{2}$ edges in $G'-V_{0}$. Thus we have
\begin{align}\label{al1}
    d_{R}(V_{i}) & \geq \frac{|V_{i}|(\delta(G)-(\beta+\eps)n-\eps n)}{m^{2}} \nonumber \\
    & \ge(c-2\eps-\be) k.\nonumber
\end{align}
\end{proof}

\subsection{Main tools}

\begin{defn}[Good walk]\label{def6.0}
Given $r, k, \ell\in \mathbb{N}$, let $R$ be a $k$-vertex graph.
Then we say $\mathcal{Q}=(S_{1}, \dots, S_{\ell})$ (allowing repetition) is a \emph{walk} of order $\ell$ in $R$, where $S_{i}\in V(R)$ for every $i\in [\ell]$, if every consecutive $r+1$ elements in $\mathcal{Q}$ induces a clique in $R$ of size either $r+1$ or $r$.
Moreover, in the latter case the repetition of a vertex is allowed only in adjacent positions.
Let $\mathcal{Q}[i, j]:=(S_{i}, S_{i+1}, \dots, S_{j})$ and $\mathcal{Q}[i]:=\mathcal{Q}[i,i]=S_i$ for every $i, j \in[\ell]$.
Then we call the $r$-tuples $\mathcal{Q}[1,r]$ and $\mathcal{Q}[\ell-r+1, \ell]$ the \emph{head} and \emph{tail} of $\mathcal{Q}$, respectively.
We say $S_{i}$ is a \emph{lazy element} in $\mathcal{Q}$ if $S_{i}=S_{i+1}$.
We say a walk $\mathcal{Q}$ is \emph{good} if the first lazy element is $S_{r+1}$, and for every two distinct lazy elements $S_{i}$ and $S_{j}$ (if any), we have $|i-j|\geq 20(r+1)$.
\end{defn}
Note that the parameter $20(r+1)$ in Definition~\ref{def6.0} is chosen to make the embedding process more convenient, and it could be an arbitrarily large constant.
Next, we give an example about Definition \ref{def6.0}.
Fix $r=2$ and $R$ to be a graph with $V(R)=\{V_{1}, V_{2}, \dots, , V_{6}, V_{7}\}$ such that $R[\{V_{i}, V_{i+1}, V_{i+2}\}]$ is a copy of $K_{3}$ for every $i\in\{1, 3, 4\}$ and $V_{6}V_{7}\in E(R)$.
Let $\mathcal{Q}=(V_{1}, V_{2}, V_{3}, V_{3}, V_{4}, V_{5}, V_{6}, V_{6}, V_{7})$.
Then by Definition \ref{def6.0} $\mathcal{Q}$ is a walk with lazy elements $\mathcal{Q}[3]$ and $\mathcal{Q}[7]$ but is not a good walk (See Figure \ref{L2}).
\begin{figure}[htbp]
\centering
\includegraphics[scale=0.6]{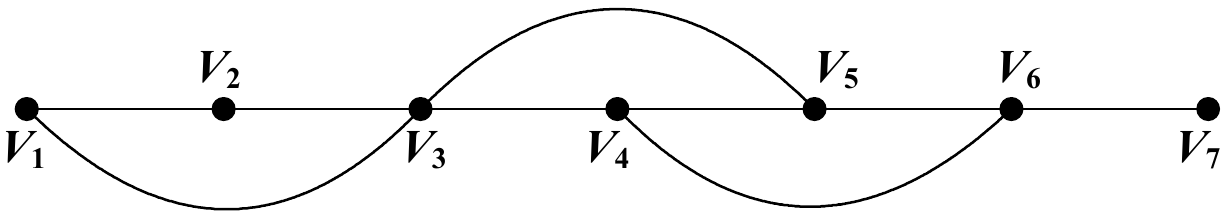}
\caption{$V_{1}V_{2}V_{3}V_{3}V_{4}V_{5}V_{6}V_{6}V_{7}$ is not a good walk}
\label{L2}
\end{figure}

The following lemma provides us a good walk with fixed head and tail.
\begin{lemma}\label{lem6.2}
For $\mu>0$ and $r\in\mathbb{N}$, the following holds for sufficiently large $k$.
Let $R$ be a $k$-vertex graph with $\delta(R)\geq \left(1-\frac{1}{r}+\mu\right)k$ and $\mf{x}$, $\mf{y}$ be two disjoint $r$-tuples of vertices in $R$, each inducing a copy of $K_{r}$.
Then there exists a good walk $\mathcal{Q}=(S_1,\ldots,S_{\ell})$ for some $\ell\leq 100r^{5}$ in $R$ with head $\mf{x}$ and tail $\mf{y}$.
Moreover the last lazy element of $\mathcal{Q}$, say $S_q$, satisfies that $q\leq \ell-20(r+1)$.
\end{lemma}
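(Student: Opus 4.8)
\textbf{Proof proposal for Lemma~\ref{lem6.2}.}

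The plan is to build the good walk in three phases: a short ``launch'' segment that leaves the head $\mf{x}$, a ``core'' traversal in which we move freely through $R$, and a short ``landing'' segment that arrives at the tail $\mf{y}$; lazy elements will be inserted only where we need to shift the $r$-window by fewer than $r$ positions, and we will control their positions by padding. Throughout, the key structural fact is that $\delta(R)\geq(1-\tfrac1r+\mu)k$ implies, by the pigeonhole principle, that every set of $r$ vertices that already induces a clique has a common neighbourhood of size at least $\mu k$; more generally, any $r-1$ vertices forming a clique have $\geq(1+\mu)k - (r-1)(\tfrac1r k) \geq \tfrac1r k + \mu k$ common neighbours, and iterating this gives, for any two cliques $K$ and $K'$ on at most $r$ vertices, a vertex adjacent to all of $K\cup K'$ unless $|K\cup K'|>r$ — and when $|K\cup K'|=r+1$ one can still often find a common neighbour because the common neighbourhood of $r$ of them is linear in $k$.

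First I would handle the \emph{core connection}. Given any two $r$-cliques $\mathbf{a}=(a_1,\dots,a_r)$ and $\mathbf{b}=(b_1,\dots,b_r)$ in $R$, I claim there is a walk of length $O(r)$ from $\mathbf{a}$ to $\mathbf{b}$ in which consecutive $(r+1)$-windows are cliques of size $r$ or $r+1$. The idea is the ``cascade''/sliding-window argument: from the window $(a_1,\dots,a_r)$ we want to append a new vertex $z_1$ adjacent to all of $a_1,\dots,a_r$, obtaining the $(r+1)$-clique $\{a_1,\dots,a_r,z_1\}$, then drop $a_1$ and append $z_2\in N(a_2,\dots,a_r,z_1)$, and so on, each step advancing the window by one vertex. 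Since at each step we only need a common neighbour of $r$ vertices that already form a clique, and that common neighbourhood has size $\geq\mu k$, we have a linear reservoir of choices at every step, so we can always avoid getting stuck and can steer the window towards containing $b_1$, then $b_1,b_2$, etc. To actually install the $b_i$ in order I would use an intermediate ``hub'': pick a single vertex (or short clique) $w$ in the common neighbourhood of a large fraction of vertices — here the min-degree condition guarantees many vertices of degree $\geq(1-\tfrac1r+\mu)k$, so repeated pigeonholing yields a clique of size $r-1$ whose common neighbourhood is $\Omega(\mu k)$ and contains both a suitable continuation of $\mathbf{a}$ and a suitable entry point for $\mathbf{b}$; routing through it costs only $O(r)$ extra steps. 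This is essentially the $(r-1)$-power-of-a-path connection in the reduced graph alluded to in the proof-strategy section, re-packaged as a walk with size-$r$ or size-$(r+1)$ windows. Counting the phases, the total length is $O(r^2)$, comfortably below $100r^5$; the generous exponent is there precisely so that we do not have to optimise.

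Next I would take care of the \emph{goodness constraints}. A lazy element $S_i=S_{i+1}$ arises exactly when, at some step of the cascade, we want to advance the clique window but the next vertex we wish to include coincides with a vertex already in the window, or when we need to change which $r$ of the current $r+1$ vertices we keep without introducing a new vertex; equivalently, whenever the natural move shifts the window by $0$ rather than $1$. Each such event can be realised by repeating one cluster once. To force the first lazy element to sit at position exactly $r+1$ and the remaining ones to be pairwise $\geq 20(r+1)$ apart, I would (i) begin the walk with the fixed head $\mf x$ occupying positions $1,\dots,r$ and deliberately make position $r+1$ a repeat of $S_r$ (this is always legal: $(S_2,\dots,S_r,S_r)$ spans a clique of size $r-1\le r$, and having $S_{r+1}=S_r$ forces the first $r+1$-window to be a clique of size $r$), thereby ``spending'' the first lazy slot at the mandated place; and (ii) between any two subsequent lazy moves, insert $20(r+1)$ genuine cascade steps of padding — since each such step only needs a fresh common neighbour from a $\Omega(\mu k)$-sized pool, padding is free, and it also keeps the last lazy element at distance $\ge 20(r+1)$ from the tail, giving the ``moreover'' clause $q\le\ell-20(r+1)$. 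Finally the walk is closed off by steering the window to equal $\mf y=(b_1,\dots,b_r)$ in that order, again by the cascade, with no lazy element needed in the last $20(r+1)$ positions.

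The main obstacle I expect is the bookkeeping at the junctions, not any single deep estimate: one must verify that \emph{every} consecutive $(r+1)$-tuple along the concatenated walk — including the tuples that straddle the boundary between two phases, and those straddling an inserted lazy element — induces a clique of the right size, and that the ordered head and tuple conditions (head $=\mf x$, tail $=\mf y$ with the prescribed orderings) are met exactly. The cascade guarantees cliqueness of windows \emph{within} a phase automatically, so the real work is a finite case check at the $O(1)$ many junctions, using repeatedly that a clique on $\le r$ vertices has $\Omega(\mu k)$ common neighbours to absorb any incidental collisions; since $k$ is large this reservoir never runs dry, and the constants ($20(r+1)$, $100r^5$) are slack enough that no careful optimisation is required.
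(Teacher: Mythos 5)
Your high-level plan --- cascade from the head, route to the tail, pad to enforce spacing of lazy elements --- is pointing in the right direction, and the observation that every $r$-clique has $\Omega(\mu k)$ common neighbours is indeed the engine that makes progress possible. But there are two concrete gaps, and they are exactly the places where the paper has to work hardest.

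First, the ``hub'' routing step is not actually justified, and I do not think it can be made to work as stated. To land the cascade window on $\mf{y}=(b_1,\dots,b_r)$ in that exact order, the $r$ vertices immediately preceding $b_1$ must already lie in $N(b_1,\dots,b_r)$, the $r$ before those in $N(\cdot,b_1,\dots,b_{r-1})$, and so on; running the cascade backward from $\mf{y}$ produces some fresh $r$-clique $B'$, and running it forward from $\mf{x}$ produces some fresh $r$-clique $A'$, and you are left with the original problem of connecting $A'$ to $B'$. Positing an intermediate $(r-1)$-clique $W$ whose common neighbourhood ``contains a suitable continuation of $\mathbf{a}$ and a suitable entry point for $\mathbf{b}$'' does not resolve this: you still have to show how the sliding window can be made to pass through $W$ while respecting the clique constraint at every step, which is the same difficulty. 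The paper's Claim 5.12 (inside Lemma 6.3) resolves it by a genuinely inductive device: it builds $Q_1,\dots,Q_{r-1}$ where every vertex of $Q_i$ is chosen inside $N_R(u_1,\dots,u_i)$, so the ``free'' coordinates are peeled off one at a time and the vertices of $\mf{y}$ are pulled into the window one by one. Nothing in your sketch plays the role of this descending chain of constraints.

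Second, and more importantly, you have mis-identified where the lazy elements come from, so the ``goodness'' bookkeeping in your plan doesn't match what actually needs to happen. Lazy elements are \emph{not} merely optional padding to slow the window down. They are forced whenever the vertex about to leave the $(r+1)$-window and the vertex about to enter it fail to be adjacent. This occurs in two unavoidable places: when passing from one $K_{r+1}$ in the chain to the next (they overlap in only $r-1$ vertices, so a vertex of $V(B_i)\setminus V(B_{i+1})$ must be flushed out before anything from $V(B_{i+1})\setminus V(B_i)$ enters --- this is Lemma 6.6), and when reordering a permutation of a fixed $K_{r+1}$ to line the head and tail up exactly (Lemma 6.5 --- try $r=2$ and the ``walk'' $(v_1,v_2,v_3,v_2,v_1,v_3)$: the window $(v_2,v_3,v_2)$ has a non-adjacent repeat, which the definition forbids, and the only fix is to insert a lazy $v_3$). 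Your proposal does not mention permutation correction at all, yet the tail must be exactly $\mf{y}$ in the prescribed order, not merely some ordering of $V(H_{\mf{y}})$. Once you account for the $O(r^2)$ swaps needed per permutation change, each costing a lazy element that must then be padded by $\geq 20(r+1)$, and up to $r^2$ clusters in the chain from Lemma 6.3, you also see why the bound is $O(r^5)$ rather than the $O(r^2)$ you claim; the latter is not achievable by this route and would in fact contradict the spacing requirement.

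In short: the cascade idea is the right skeleton, but the proof lives in (i) the inductive construction of the $K_{r+1}$-chain and (ii) the permutation/lazy-element machinery, and your sketch engages with neither.
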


The proof of Lemma \ref{lem6.2} will be presented in next subsection.
The subsequent work in proving Lemma \ref{connect} is the embedding process.
To elaborate on this, we need the following two lemmas (Lemma \ref{lem6.30} and Lemma \ref{lem6.31}).
Figure \ref{L3} is an illustration of Lemma \ref{lem6.30}, and Figure \ref{L4} is an illustration of Lemma \ref{lem6.31}.
\begin{figure}[htbp]
\centering
\includegraphics[scale=0.6]{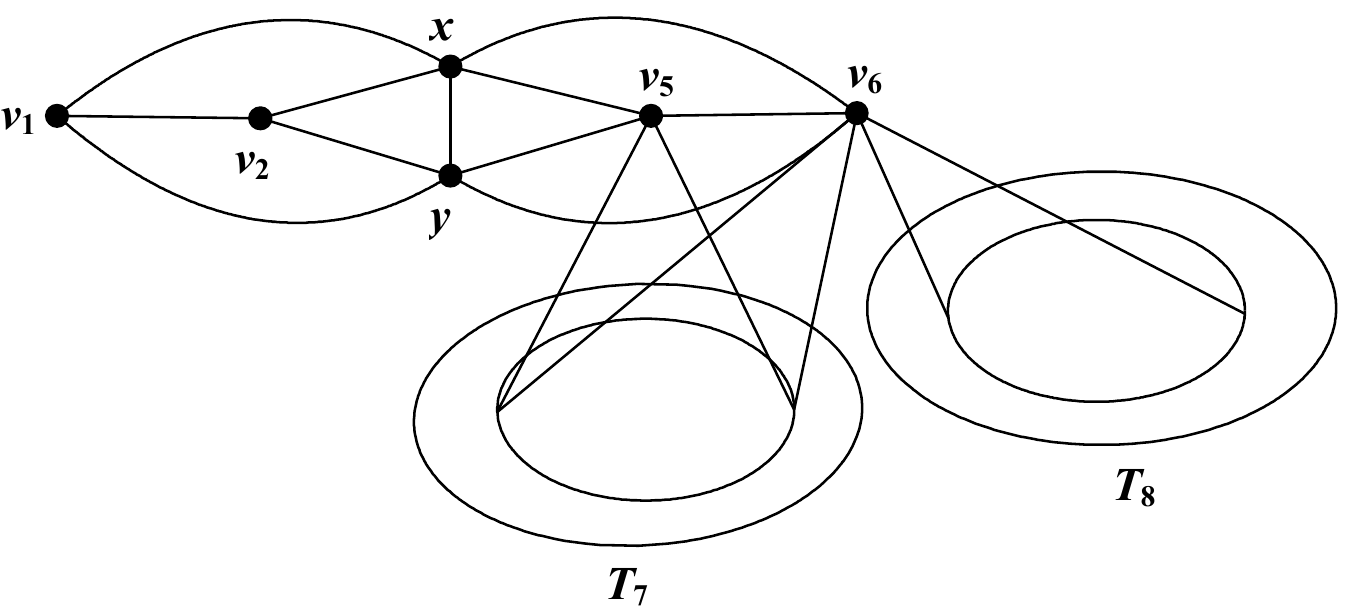}
\caption{Illustration of $r=2$}
\label{L3}
\end{figure}
\begin{lemma}\label{lem6.30}
For $\beta,\eta>0$ and $r, k\in\mathbb{N}$, there exist $\eps, \alpha>0$ such that the following holds for sufficiently large $m\in \mathbb{N}$.
Let $G$ be a graph with an equipartition $V(G)=V_{1}\cup V_{2}\cup\cdots\cup V_{k}$, $\alpha(G)\leq \alpha |V(G)|$, and $|V_{i}|=m$ for every $i\in [k]$.
Let $R$ be a graph with $V(R)=\{V_{1}, V_{2}, \dots, V_{k}\}$, and $V_{i}V_{j}\in E(R)$ if $(V_{i}, V_{j})$ is $(\eps, \beta)$-regular in $G$.
Let $\mathcal{Q}:=(S_{1}, S_{2}, \dots, S_{3r+2})$ be a good walk in $R$ with exactly one lazy element $S_{r+1}$ and $T_i\<S_i$ be a set of size at least $\eta m$ for every $i\in[3r+2]$ such that $T_{r+1}=T_{r+2}$.
Then there exists an $r$-path in $G$, say $P=v_{1}v_{2}\dots v_{r}xyv_{r+3}\dots v_{2r+2}$, such that $\{x, y\}\< T_{r+1}$, $v_{i}\in T_{i}$ for every $i\in[2r+2]\setminus \{r+1,r+2\}$, and $|N_{G}(v_{j-r}, \dots, v_{2r+2})\cap T_{j}|\geq \left(\frac{\beta}{2}\right)^{r}|T_j|$ for every $j\in [2r+3, 3r+2]$.
\end{lemma}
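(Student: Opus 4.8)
\textbf{Proof proposal for Lemma~\ref{lem6.30}.}
The plan is to embed the $r$-path greedily, vertex by vertex, following the good walk $\mathcal{Q}$, and to maintain at each step a ``reservoir'' of many legal choices for the next few vertices so that the process never gets stuck; the special structure of a good walk (exactly one lazy element, sitting at position $r+1$) is exactly what makes the bookkeeping go through. First I would set up the constants: choose $\eps\ll \beta,\eta,1/r,1/k$ and $\alpha\ll\eps$ so that the Slicing Lemma (Lemma~\ref{lem2.5}) applies to the sets $T_i$ (each of relative size at least $\eta\geq\eps$ inside its cluster) and all the relevant pairs $(T_i,T_j)$ remain $\eps'$-regular with density within $\eps$ of the original, hence at least $\beta/2$. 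For each consecutive $r+1$ clusters in $\mathcal{Q}$ that form a $K_{r+1}$ (or $K_r$ in the lazy case) in $R$, every corresponding pair of the restricted sets $T_i,T_j$ is then $(\eps',\beta/2)$-regular; this is the only place regularity is used.

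The core is an embedding lemma of the following shape: if one has already chosen vertices $v_1,\dots,v_t$ (with $v_i\in T_i$, or in the set $T_{r+1}$ for the two lazy coordinates) so that the ``tail'' of the partial embedding still has many common neighbours in each of the next $r$ target sets, then one can extend by one more vertex $v_{t+1}$ while keeping this invariant. Concretely I would track, for the current frontier, the quantity $|N_G(v_{s-r+1},\dots,v_s)\cap T_{s+1}|$ and show it stays at least $(\beta/2)^r|T_{s+1}|$: each new common-neighbourhood restriction against an $(\eps',\beta/2)$-regular pair loses at most a factor $\beta/2$ provided the current set has size at least $\eps'|T_{s+1}|$, which holds since $(\beta/2)^r\gg\eps'$. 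Here the assumption $\alpha(G)\le\alpha|V(G)|$ is what lets us actually pick $v_{t+1}$ to be \emph{distinct} from all previously used vertices and from the at most $2r$ ``forbidden'' coordinates: the set of legal candidates has size $\ge(\beta/2)^r m - (\text{number of used vertices}) \ge (\beta/2)^rm/2$, which is far larger than $\alpha m$, so it is nonempty (indeed it even contains an edge, though here we only need one vertex). The lazy element needs separate care: when the walk repeats $S_{r+1}=S_{r+2}$ we must place two vertices $x,y$ in the \emph{same} set $T_{r+1}=T_{r+2}$ forming the required $r$-path locally, i.e. $x,y$ together with $v_1,\dots,v_r$ and $v_{r+3},\dots$ span the right clique pattern; since $T_{r+1}$ is an independent set \emph{within one cluster} we instead use that $R[S_1,\dots,S_r,S_{r+1}]$ is a $K_r$, meaning $x$ and $y$ need only be adjacent to the appropriate earlier/later vertices, not to each other, and $\alpha(G)=o(m)$ again supplies enough room to pick such $x,y$ with all the common-neighbourhood counts surviving.

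I would carry this out in the order: (i) fix constants and apply Lemma~\ref{lem2.5} to all $T_i$ and pairs; (ii) choose $v_1\in T_1$ with $|N(v_1)\cap T_2|\ge(\beta/2)|T_2|$, etc., initialising the frontier (possible because in a regular pair all but $\le\eps'|V_1|$ vertices have large degree into the other side, and $T_1$ is much bigger than that); (iii) iterate the one-step extension for positions $2,\dots,r$; (iv) handle the lazy double step at $r+1,r+2$; (v) continue the one-step extension for positions $r+3,\dots,2r+2$, at each stage verifying the invariant; (vi) observe that after placing $v_{2r+2}$ the frontier $(v_{r+1},\dots,v_{2r+2})=(x,y,v_{r+3},\dots,v_{2r+2})$ has, by the invariant, $|N_G(v_{j-r},\dots,v_{2r+2})\cap T_j|\ge(\beta/2)^r|T_j|$ for $j=2r+3,\dots,3r+2$, which is precisely the ``moreover'' conclusion, so no further vertices need to be embedded. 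The main obstacle I expect is the lazy element: making precise which adjacencies $x$ and $y$ must satisfy so that the global object really is an $r$-path $v_1\dots v_r x y v_{r+3}\dots v_{2r+2}$ (in particular that $x$ is adjacent to $v_2,\dots,v_r$ and $v_{r+3},\dots$, and $y$ to $v_{r+3},\dots,v_{2r+2}$ and back to $v_2,\dots$), and checking that the good-walk condition ``the first lazy element is $S_{r+1}$'' together with the $K_r$-pattern on $S_1,\dots,S_{r+1}$ gives exactly the regular pairs needed for those adjacencies — this is a finite case-check but must be done carefully; everything else is a routine regularity-plus-counting induction.
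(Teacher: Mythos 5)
Your overall plan — a greedy one-step extension through the walk, tracking for the current frontier the size of its common neighbourhood in each of the next $r$ target sets — is essentially the paper's argument: the paper embeds $v_1,\dots,v_r$ and $v_{r+3},\dots,v_{2r+2}$ via (the proof of) Lemma~\ref{lem6.31} and then handles the two lazy coordinates. But there is a genuine error in how you handle the lazy step. You assert that ``$x$ and $y$ need only be adjacent to the appropriate earlier/later vertices, not to each other,'' and earlier remark that the independence hypothesis is used merely to ``pick $v_{t+1}$ to be distinct from all previously used vertices'' (``here we only need one vertex''). Both claims are wrong. Since $x$ and $y$ occupy consecutive positions of the $r$-path $v_1\dots v_r\,x\,y\,v_{r+3}\dots v_{2r+2}$, the definition of the $r$-th power of a path forces $xy\in E(G)$; for instance $\{v_2,\dots,v_r,x,y\}$ must span a $K_{r+1}$. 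As you correctly observe, $x$ and $y$ live in the same cluster $V_i\supseteq T_{r+1}=T_{r+2}$, so regularity gives no control whatsoever over the $G$-adjacency $xy$. Dropping that edge is not an option; without it, the object you build is not an $r$-path, and the conclusion of the lemma fails.

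This is precisely, and solely, where the hypothesis $\alpha(G)\le\alpha|V(G)|$ enters. (It is not needed to ensure distinctness of greedily chosen vertices; that is trivial when the candidate set has size $\gg r$. Indeed Lemma~\ref{lem6.31}, which carries out the same one-step extensions but in the absence of a lazy element, has no independence-number hypothesis at all.) The paper's proof first embeds $v_1,\dots,v_r$ so that $T^0_{r+1}:=N_G(v_1,\dots,v_r)\cap T_{r+1}$ is large, then chooses $v_{r+3},\dots,v_{2r+2}$ so that $T^1_{r+1}:=N_G(v_{r+3},\dots,v_{2r+2})\cap T^0_{r+1}$ still has size at least $(\beta/2)^{2r}\eta m>\alpha|V(G)|$, and only then invokes $\alpha(G)\le\alpha|V(G)|$ to find an edge $xy$ inside $G[T^1_{r+1}]$. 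To repair your argument you must add exactly this step: after securing the common-neighbourhood invariants, exhibit a subset of $T_{r+1}$ lying in $N_G(v_1,\dots,v_r,v_{r+3},\dots,v_{2r+2})$ of size exceeding $\alpha|V(G)|$, and take an edge of $G$ inside it for $\{x,y\}$. Everything else in your sketch (the constant hierarchy, Slicing-Lemma/Fact~\ref{fac5.10} bookkeeping, and the verification that the frontier after $v_{2r+2}$ gives the ``moreover'' conclusion) is sound and matches the paper.
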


\begin{lemma}\label{lem6.31}
For $\beta,\eta>0$ and $r, k, \ell\in\mathbb{N}$, there exists $\eps>0$ such that the following holds for sufficiently large $m\in \mathbb{N}$.
Let $G$ be a graph with an equipartition $V(G)=V_{1}\cup V_{2}\cup \cdots\cup V_{k}$, and $|V_{i}|=m$ for every $i\in [k]$.
Let $R$ be a graph with $V(R)=\{V_{1}, V_{2}, \dots, V_{k}\}$, and $V_{i}V_{j}\in E(R)$ if $(V_{i}, V_{j})$ is $(\eps, \beta)$-regular in $G$.
Let $\mathcal{Q}:=(S_{1}, \dots, S_{\ell+r})$ be a good walk in $R$ without any lazy element and $T_i\<S_i$ be a set of size at least $\eta m$ for every $i\in[\ell+r]$.
Then there exists an $r$-path in $G$, say $P=v_{1}v_{2}\dots v_{\ell}$, such that $v_{i}\in T_{i}$ for every $i\in[\ell]$ and $|N_{G}(v_{j-r}, \dots, v_{\ell})\cap T_{j}|\geq \left(\frac{\beta}{2}\right)^{r}|T_j|$ for every $j\in [\ell+1, \ell+r]$.
\end{lemma}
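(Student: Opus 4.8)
The plan is to build the $r$-path $P=v_1v_2\cdots v_\ell$ by a greedy embedding that processes the clusters $S_1,\dots,S_{\ell+r}$ from left to right, maintaining throughout a system of large candidate sets. Having chosen $v_1,\dots,v_t$ (with $t=0$ meaning nothing has been chosen), I keep, for each of the next $r$ positions $j\in\{t+1,\dots,t+r\}$, the set
\[
A_j \;:=\; T_j\cap\bigcap_{\,s\in[\max(1,j-r),\,t]}N_G(v_s),
\]
consisting of those vertices of $T_j$ still compatible with every already-embedded vertex at distance at most $r$ from position $j$; initially $A_j=T_j$ for $j\in[r]$. Two observations make this the right object: choosing $v_{t+1}$ from $A_{t+1}$ forces $v_{t+1}$ to be adjacent to $v_t,v_{t-1},\dots,v_{t+1-r}$, which is exactly the adjacency an $r$-path requires at that position; and after the last step the set $A_j$ equals $N_G(v_{j-r},\dots,v_\ell)\cap T_j$ for $j\in[\ell+1,\ell+r]$, so the conclusion is just the statement that all candidate sets stay large.

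First I would fix $\eps\ll\beta,\eta,1/r$ and then take $m$ large, so that $\beta-\eps\ge\beta/2$ and $(\beta/2)^r\eta m>r\eps m+\ell$. The invariant I would carry along is that at every stage each tracked $A_j$ has size at least $(\beta/2)^{c}|T_j|$, where $c\le r$ is the number of neighbourhood-intersections already applied to $T_j$ at position $j$; in particular $|A_j|\ge(\beta/2)^r\eta m>\eps m$. The inductive step is a routine regularity argument. Because $\mathcal{Q}$ is a good walk with \emph{no} lazy element, every block of $r+1$ consecutive clusters spans a copy of $K_{r+1}$ in $R$, so every pair $(S_s,S_j)$ with $|s-j|\le r$ is $(\eps,\beta)$-regular. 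Hence, for each fixed position $j\in\{t+2,\dots,t+r\}$ (and also $j=t+r+1$, where we set $A_{t+r+1}:=T_{t+r+1}$), applying regularity to the pair $(S_{t+1},S_j)$ together with $|A_j|>\eps m$ shows that fewer than $\eps m$ vertices $v\in S_{t+1}$ satisfy $|N_G(v)\cap A_j|<(\beta-\eps)|A_j|$. Summing over the at most $r$ relevant positions and also excluding the at most $\ell$ already-used vertices, we discard fewer than $r\eps m+\ell<(\beta/2)^r\eta m\le|A_{t+1}|$ vertices, so a valid $v_{t+1}\in A_{t+1}$ remains. Replacing $A_j$ by $A_j\cap N_G(v_{t+1})$ for $j\in\{t+2,\dots,t+r\}$ and introducing $A_{t+r+1}:=T_{t+r+1}\cap N_G(v_{t+1})$ costs exactly one more factor of $\beta/2$ at each of these positions, i.e.\ one more intersection, so the invariant is restored with the window shifted by one.

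Running this for $t=0,1,\dots,\ell-1$ yields an $r$-path $v_1\cdots v_\ell$ with $v_i\in T_i$ and all required adjacencies, and leaves the final candidate sets $A_j=N_G(v_{j-r},\dots,v_\ell)\cap T_j$ of size at least $(\beta/2)^r|T_j|$ for every $j\in[\ell+1,\ell+r]$, which is exactly the conclusion. I do not foresee a genuine obstacle: this is an essentially standard greedy embedding, and the only care needed is in (i) choosing the constants so that candidate sets never fall below $\eps m$ (so that $\eps$-regularity keeps applying) and always outnumber the forbidden vertices, and (ii) the bookkeeping that one advance of the window costs precisely one factor $\beta/2$ per tracked cluster, so the uniform bound $(\beta/2)^r$ survives to the very end. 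The hypothesis that $\mathcal{Q}$ has no lazy element is precisely what guarantees we never need two embedded vertices inside a single cluster; that harder case is what Lemma~\ref{lem6.30} handles (there using the sublinear independence number to locate a suitable edge inside the repeated cluster).
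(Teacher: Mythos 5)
Your proposal is correct and is essentially the paper's argument: the paper proves the same claim by induction on the prefix length $s$, tracking the intersected candidate sets $T^{*}_{s+j}=N_{G}(v_{s-t(s,j)+1},\dots,v_{s})\cap T_{s+j}$ with $t(s,j)=\min\{s,r-j+1\}$, which is exactly your $A_j$ and your exponent $c$; the choice of the next vertex is made via the same regularity observation (Fact~\ref{fac5.10}) after discarding at most $r\eps m$ bad vertices and the at most $\ell$ already-used ones. The only difference is cosmetic (you phrase it as a greedy sweep with an invariant rather than as a formal induction).
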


\begin{figure}[htbp]
\centering
\includegraphics[scale=0.6]{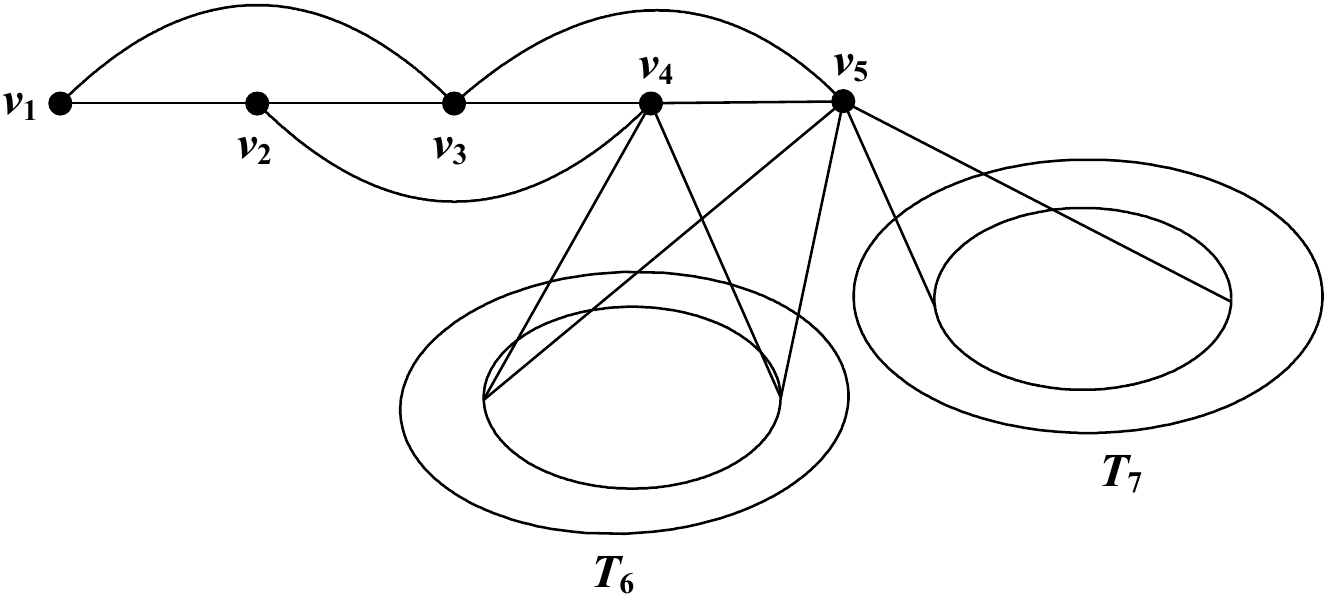}
\caption{Illustration of $r=2$ and $\ell=5$}
\label{L4}
\end{figure}

\subsection{Proof of Lemma~\ref{connect}}
We now have all the necessary tools to prove Lemma~\ref{connect}.
\begin{proof}[Proof of Lemma \ref{connect}]
Given $\mu>0$, $r\in \mathbb{N}$, we set $\beta=\frac{\mu}{20}$, and choose
\begin{center}
$\frac{1}{n}\ll \alpha\ll\frac{1}{k}\ll\eps\ll\mu, \frac{1}{r}$.
\end{center}
Let $G$ be an $n$-vertex graph with $\delta(G)\geq \left(1-\frac{1}{r}+\mu\right)n$ and $\alpha(G)\leq \alpha n$.
Let $\mf{x}$, $\mf{y}$ be two disjoint $r$-tuples of vertices, each inducing a copy of $K_{r}$ in $G$.
In order to simplify the notation, we write $\mf{x}=(x_{1}, \dots, x_{r})$ and $\mf{y}=(y_{r}, \dots, y_{1})$.
Our goal is to find an $r$-path $P$ in $G$ which connects $(x_{1}, \dots, x_{r})$ and $(y_{r}, \dots, y_{1})$.
Applying Lemma \ref{lem2.6} to $G$, we obtain an $(\eps,\be)$-regular partition $\mathcal{P}=\{V_{0}, V_{1}, \dots, V_{k}\}$ of $V(G)$ and write $m:=|V_{i}|$ for every $i\in [k]$.
Let $R:=R_{\eps, \beta}$ be the reduced graph for the $(\eps,\be)$-regular partition $\mathcal{P}$.
Then Fact~\ref{fact2.8} implies that $\delta(R)\geq \left(1-\frac{1}{r}+\frac{\mu}{2}\right)k$.

The following claim reduces the connecting process to finding a good walk in $R$ with fixed head and tail,
whose proof is postponed to the end of this subsection.
\begin{claim}\label{cl6.3}
Let $G[\{v_{1}, \dots, v_{r}\}]$ be a copy of $K_{r}$ in $G$ and $W\subseteq V(R)$ with $|W|\leq r$.
Then there exists a copy of $K_{r}$ in $R-W$, say $H$ with $V(H)=\{V_{1}, V_{2}, \dots, V_{r}\}$ such that $|N_{G}(v_{i}, \dots, v_{r})\cap V_{i}|\geq \mu m$ for every $i\in[r]$.
\end{claim}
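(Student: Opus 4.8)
\textbf{Proof proposal for Claim~\ref{cl6.3}.}

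The plan is to find the clique $H$ greedily, one cluster at a time, maintaining at each step a large set of vertices in the current cluster that still ``see'' all of $v_i, \dots, v_r$. I would process the coordinates in the order $i = r, r-1, \dots, 1$, so that the constraint set only shrinks in a controlled way. Concretely, suppose inductively that I have already chosen distinct clusters $V_{i+1}, \dots, V_r$ forming a clique in $R - W$, together with subsets $U_j \subseteq V_j$ of size at least $\mu m$ with $U_j \subseteq N_G(v_j, \dots, v_r)$, and such that $\{V_{i+1}, \dots, V_r\}$ is ``mutually $\eps$-regular with positive density'' (i.e.\ an edge of $R$ for each pair). To pick $V_i$, I first observe that $v_i$ has at least $\delta(G) \ge (1 - \frac1r + \mu)n$ neighbours in $G$; since $|V_0| \le \eps n$ and each $V_j$ has $m = |V_j|$ vertices, there are at least $(1 - \frac1r + \frac{\mu}{2})k$ clusters $V$ with $|N_G(v_i) \cap V| \ge \frac{\mu}{2} m$ (a cluster with fewer than $\frac{\mu}{2}m$ neighbours of $v_i$ contributes at most $\frac{\mu}{2}m$ to the neighbourhood count, so the ``good'' clusters must make up the rest). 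Call such clusters \emph{rich for} $v_i$.

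Now I need $V_i$ to simultaneously be (a) rich for $v_i$, (b) adjacent in $R$ to each of $V_{i+1}, \dots, V_r$, and (c) not in $W \cup \{V_{i+1}, \dots, V_r\}$. Each of $V_{i+1}, \dots, V_r$ has non-neighbourhood in $R$ of size at most $(\frac1r - \frac{\mu}{2})k$, and there are at most $r-1$ of them plus at most $r$ clusters in $W$ plus $r - i$ already-chosen clusters to avoid; since $\delta(R) \ge (1 - \frac1r + \frac{\mu}{2})k$ and the number of rich clusters is at least $(1 - \frac1r + \frac{\mu}{2})k$, a union-bound count shows the set of clusters satisfying (a)--(c) has size at least $(1 - \frac{r}{r} + \frac{r\mu}{2})k - O(r) \ge \frac{\mu}{4}k > 0$ for $k$ large (here one uses $\delta(R)\ge(1-\frac1r+\frac\mu2)k$ so that even intersecting the neighbourhoods of $V_{i+1},\dots,V_r$ leaves at least $(1-\frac{r-i+1}{r}+\frac{(r-i+1)\mu}{2})k$ clusters, which stays positive as long as one simultaneously intersects with the rich set). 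Pick any such cluster as $V_i$. Finally I must shrink the subsets: set $U_i' := N_G(v_i) \cap V_i$, which has size $\ge \frac{\mu}{2}m \ge \eps m$, and for each $j > i$ replace $U_j$ by $U_j \cap N_G(\text{appropriate vertices})$ — but actually the cleaner bookkeeping is to only fix $U_i := N_G(v_i, v_{i+1}, \dots, v_r) \cap V_i$ at the very end using regularity: since $(V_i, V_j)$ is $(\eps,\beta)$-regular and $U_j$ (at stage $j$) had size $\ge \mu m \ge \eps m$, the set of vertices in $V_i$ with $< (\beta - \eps)|U_j| $ neighbours in $U_j$ has size $< \eps m$; intersecting over the $\le r$ clusters $V_j$ and with the rich set $N_G(v_i)\cap V_i$, we retain $\ge (\frac{\mu}{2} - r\eps)m \ge \mu m$ vertices (after adjusting constants / choosing $\eps$ small enough; here one should be slightly more careful and track the $U_j$'s as genuinely nested from the start, but the count is the same).

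The main obstacle I anticipate is the joint counting in the middle step: one needs the clusters that are rich for $v_i$ to have substantial overlap with the common $R$-neighbourhood of all previously chosen clusters, and a naive union bound over $r$ non-neighbourhoods of size $(\frac1r - \frac\mu2)k$ each only leaves room of size $\frac{r\mu}{2}k$ minus lower-order terms, which is exactly tight enough — so the argument genuinely uses that $\delta(R) \ge (1 - \frac1r + \frac\mu2)k$ rather than merely $(1 - \frac1r)k$, and uses that $r$ is a constant so that $O(r)$ additive losses (from $W$ and already-chosen clusters) are absorbed for $k$ large. The other point requiring care is the order of operations between ``choosing $V_i$'' and ``shrinking the $U_j$'': it is cleanest to carry forward, for each already-chosen $V_j$, the set $U_j = N_G(v_j,\dots,v_r)\cap V_j$ with $|U_j|\ge \mu m$, and when introducing $V_i$ use $(\eps,\beta)$-regularity of each pair $(V_i, V_j)$ together with $|U_j| \ge \mu m \ge \eps m$ and $\beta = \frac{\mu}{20} \ll \mu$ to find $\ge \mu m$ vertices of $V_i\cap N_G(v_i)$ that are adjacent to $\ge (\beta-\eps)\mu m$ vertices of each $U_j$ — then one does \emph{not} need to re-shrink the old $U_j$'s, since the clique condition for $H$ only needs the $R$-edges, and the displayed neighbourhood bound $|N_G(v_i,\dots,v_r)\cap V_i| \ge \mu m$ is precisely what $U_i$ provides. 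All quantities are constants times $k$ or $m$, so no asymptotic subtlety beyond ``$k, m$ sufficiently large''.
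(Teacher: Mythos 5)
Your proposal has a genuine gap, and I think it is worth separating two issues.

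\textbf{The main error is that ``rich'' and the regularity repair address the wrong object.} The claim requires $|N_G(v_i,\dots,v_r)\cap V_i|\ge\mu m$, i.e.\ the common $G$-neighbourhood of the actual vertices $v_i,v_{i+1},\dots,v_r$ should land heavily in $V_i$. Your notion of ``rich for $v_i$'' only controls $N_G(v_i)\cap V_i$, a single vertex's neighbourhood. You then try to upgrade this by noting that $(V_i,V_j)$ is $(\eps,\beta)$-regular and $U_j:=N_G(v_j,\dots,v_r)\cap V_j$ is large, to find many $u\in V_i\cap N_G(v_i)$ with $|N_G(u)\cap U_j|\ge(\beta-\eps)|U_j|$ for each $j>i$. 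But this gives vertices $u$ adjacent to \emph{many points of the subset $U_j\subseteq V_j$}, not adjacent to \emph{the vertex $v_j$ itself}. The vertices $v_1,\dots,v_r$ are arbitrary vertices of $G$; they need not lie in the clusters $V_j$ and regularity of cluster pairs says nothing about their neighbourhoods. So the set you construct is not $N_G(v_i,\dots,v_r)\cap V_i$, and the claimed identification ``the displayed neighbourhood bound is precisely what $U_i$ provides'' is unjustified.

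\textbf{Even with the correct notion of rich, the reversed order $i=r,\dots,1$ is the wrong direction.} If you insist on choosing $V_i$ with $|N_G(v_i,\dots,v_r)\cap V_i|\ge\mu m$, a straightforward count shows the number of such clusters is only about $\bigl(\tfrac{i-1}{r}+(r-i)\mu\bigr)k$, which for $i=1$ is roughly $(r-1)\mu k$, while $|N_R(V_2,\dots,V_r)\setminus W|$ is only about $\tfrac{1}{r}k$. Their sum is $\approx\tfrac{1}{r}k+(r-1)\mu k$, which is far below $k$ for small $\mu$, so the union-bound intersection you rely on can be empty. The paper instead builds the clique in the order $i=1,2,\dots,r$ starting from the \emph{hardest} constraint $N_G(v_1,\dots,v_r)$, and crucially uses an averaging/contradiction argument rather than a union bound: at step $s+1$ the number of rich clusters is $\approx(\tfrac{s}{r}+(r-s-1)\mu)k$ and $|N_R(V_1,\dots,V_s)|\gtrsim(1-\tfrac{s}{r}+\tfrac{s\mu}{2})k$, so the sum exceeds $k$ by $\Omega(\mu k)$ for every $s\le r-1$. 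Going from hard to easy is what keeps the slack positive throughout; going from easy to hard, as you propose, squeezes the slack to negative at the final step.
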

Applying Claim \ref{cl6.3} with $W=\emptyset$, we obtain a copy of $K_r$ in $R$, whose vertex set, without loss of generality, is denoted as $\{V_{1}, \dots, V_{r}\}$.
Then
$|N_{G}(x_{i}, \dots, x_{r}) \cap V_{i}|\geq \mu m$ for every $i\in[r]$.
Applying Claim \ref{cl6.3} again with $W=\{V_{1}, \dots, V_{r}\}$, we obtain a copy of $K_r$ in $R-W$, whose vertex set is denoted as $\{U_{1},\dots, U_{r}\}\< V(R)\backslash W$.
Then $|N_{G}(y_{i}, \dots, y_{r})\cap U_{i}|\geq \mu m$ for every $i\in[r]$.

By applying Lemma \ref{lem6.2} to $R$, we obtain a good walk $\mathcal{Q}:=(S_1, \dots, S_\ell)$ for some $\ell\leq 100r^{5}$ with head $(V_{1}, \dots, V_{r})$
and tail $(U_{r},\dots, U_{1})$.
Let $S_{t_{1}}, S_{t_{2}}, \dots, S_{t_{s}}$ be all the lazy elements in $\mathcal{Q}$ for some $s\in \mathbb{N}$.
Then it follows that $t_{1}=r+1$, $t_{q+1}-t_{q}\geq 20(r+1)$ for every $q\in[s-1]$, $\ell-t_{s}\geq 20(r+1)$, and $s\leq\frac{100r^{5}}{20(r+1)}\leq  5r^{4}$.
The desired $r$-path will be constructed piece by piece in the following phases, where we denote by $P_0$ the $r$-path $x_1\ldots x_r$.

\noindent\textbf{Phase $0$.} Let $S^{0}_{i}:=N_{G}(x_{i}, \dots, x_{r})\cap V_{i}$ and $S^{0}_{\ell-i+1}:=N_{G}(y_{i}, \dots, y_{r})\cap U_{i}$ for every $i\in [r]$.
Then by Claim~\ref{cl6.3} we obtain that $|S^{0}_{i}|, |S^{0}_{\ell-r+i}|\geq \mu m$ for every $i\in [r]$.
Let $S^{0}_{i}:=S_{i}$ for every $i\in [r+1, \ell-r]$.

\noindent\textbf{Phase $1$.} We first consider the subwalk $\mathcal{Q}^{1}_{1}:=(S_{1}, \dots, S_{3r+2})$.
Applying Lemma \ref{lem6.30} to $\mathcal{Q}^{1}_{1}$ with $\eta=\mu$ and $T_i=S_i^0$ for every $i\in[3r+2]$, we obtain an $r$-path, say $P^{1}_{1}=v_{1}\cdots v_{r}x^{1}y^{1}v_{r+3}\cdots v_{2r+2}$ with $v_{i}\in S^0_{i}$ for every $i\in [2r+2]\backslash \{r+1, r+2\}$ and $x^{1}, y^{1}\in S^0_{r+1}=S^0_{r+2}$.
Moreover, we have
$|N_{G}(v_{j-r}, \dots, v_{2r+2})\cap S^0_{j}|\geq \left(\frac{\beta}{2}\right)^{r}m$ for every $j\in[2r+3, 3r+2]$.
Let $S^{1}_{j}=(N_{G}(v_{j-r}, \dots, v_{2r+2})\cap S^{0}_{j})\backslash V(P^{1}_{1})$ for every $j\in [2r+3, 3r+2]$.
Then $|S^{1}_{j}|\geq \left(\tfrac{\beta}{2}\right)^{r}m-|V(P_{1}^{1})|\geq\left(\frac{\beta}{4}\right)^{r}m$.

Next, we consider the subwalk $\mathcal{Q}^{2}_{1}:=(S_{2r+3}, \dots, S_{t_{2}-1})$.
Let $S^{1}_{j}:=S^{0}_{j}\backslash V(P_{1}^{1})$ for every $j\in [3r+3, t_{2}-1]$.
Then $|S^{1}_{j}|\geq m-|V(P_{1}^{1})|\geq\frac{m}{2}$ for every $j\in [3r+3, t_{2}-1]$.
Applying Lemma \ref{lem6.31} to $\mathcal{Q}^{2}_{1}$ with $\eta=\left(\frac{\be}{4}\right)^r$ and $T_i=S_{i+2r+2}^1$ for every $i\in[t_2-2r-3]$, we obtain an $r$-path, say $P^{2}_{1}=v_{2r+3}\cdots v_{t_{2}-r-1}$ with $v_{i}\in S^1_{i}$ for every $i\in [2r+3, t_{2}-r-1]$, such that \[|N_{G}(v_{j-r}, \dots, v_{t_{2}-r-1})\cap S^{1}_{j}|\geq \left(\tfrac{\beta}{2}\right)^{r}\tfrac{m}{2}~\text{for every}~ j\in [t_{2}-r, t_{2}-1].\]
By the definition of $S_j^0$ and $S_j^1$ for all $j\in[t_2-1]$, we end up with an $r$-path
\begin{center}
$x_1\ldots x_rv_{1}\cdots v_{r}x^{1}y^{1}v_{r+3}\cdots v_{t_2-r-1}$,
\end{center}
denoted as $P_{1}=P_0 P_{1}^{1} P_{1}^{2}$.
We update
\[S_{j}^{0}\leftarrow(N_{G}(v_{j-r}, \dots, v_{t_{2}-r-1})\cap S^{0}_{j})\backslash V(P_{1})~\text{for every}~j\in [t_{2}-r, t_{2}-1].\]
For every $j\in [t_{2}-r, t_{2}-1]$, we observe that
\begin{align}\label{ali1}
  |S_{j}^{0}| & =|(N_{G}(v_{j-r}, \dots, v_{t_{2}-r-1})\cap S^{0}_{j})\backslash V(P_{1})|\geq \left(\tfrac{\beta}{2}\right)^{r}\tfrac{m}{2}-|V(P_{1})|\geq \left(\tfrac{\beta}{4}\right)^{r}m.
\end{align}
For every $i\in [s-1]$, we keep updating $S_{j}^{0}$ for every $j\in [t_{i+1}-r, t_{i+1}-1]$, and for every $i\geq 1$, define ($\textbf{E}_{i}$) as follows and show that ($\textbf{E}_{i}$) holds in \textbf{Phase} $i$.
\begin{flushleft}
($\textbf{E}_{i}$). $|S^{0}_{j}|\geq \left(\frac{\beta}{4}\right)^{r}m$ for every $j\in [t_{i+1}-r, t_{i+1}-1]$.
\end{flushleft}
Therefore, ($\textbf{E}_{1}$) holds by (\ref{ali1}).

Suppose after the first $i-1$ ($i\geq 2$) phases, we obtain an $r$-path, say $P_{i-1}:=P_{0}P_{1}^{1} P_{1}^{2} \cdots P_{i-1}^{1} P_{i-1}^{2}$, and a sequence of subsets $S^{0}_{j}$ which satisfy ($\textbf{E}_{i-1}$).
Next we move to \textbf{Phase} $i$.

\noindent\textbf{Phase $i$ ($i\geq 2$).}
We write $S^{0}_{j}:=S_{j}\backslash V(P_{i-1})$ for every $j\in [t_{i}, t_{i+1}-1]$, and thus $|S^{0}_{j}|\geq m-|V(P_{i-1})|\geq\frac{m}{2}$.
Similarly, we focus on the subwalks \[\mathcal{Q}^{1}_{i}:=(S_{t_{i}-r}, \dots, S_{t_{i}+2r+1})~\text{and}~ \mathcal{Q}^{2}_{i}:=(S_{t_{i}+r+2}, \dots, S_{t_{i+1}-1}).\]
Recall $S_{t_{i}}$ is a lazy element.
By applying Lemma~\ref{lem6.30} and Lemma~\ref{lem6.31} as in \textbf{Phase} $1$ and the condition in ($\textbf{E}_{i-1}$), we end up with an $r$-path, denoted as $P_i$ such that \[P_i=P_{i-1} v_{t_{i}-r}\cdots v_{t_{i}-1}x^{i}y^{i}v_{t_{i}+2}\cdots v_{t_{i}+r+1} v_{t_{i}+r+2}\dots v_{t_{i+1}-r-1},\] where $x^{i}, y^{i}\in S^0_{t_{i}}$ and $v_{j}\in S^0_{j}$ for every $j\in[t_{i}-r, t_{i+1}-r-1]\backslash \{t_{i}, t_{i}+1\}$.
Moreover, we obtain a sequence of subsets $N_{G}(v_{j-r}, \dots, v_{t_{i+1}-r-1})\cap S^{0}_{j}$, $j\in [t_{i+1}-r, t_{i+1}-1]$, each of size at least $\left(\tfrac{\beta}{2}\right)^{r}\tfrac{m}{2}$.
Similarly, we update \[S_{j}^{0}\leftarrow(N_{G}(v_{j-r}, \dots, v_{t_{i+1}-r-1})\cap S^{0}_{j})\backslash V(P_{i})~\text{for every}~j\in [t_{i+1}-r, t_{i+1}-1].\]
It holds that $|S_{j}^{0}|=|(N_{G}(v_{j-r}, \dots, v_{t_{i+1}-r-1})\cap S^{0}_{j})\backslash V(P_{i})|\geq \left(\tfrac{\beta}{2}\right)^{r}\tfrac{m}{2}-|V(P_{i})|\geq \left(\tfrac{\beta}{4}\right)^{r}m$ for every $j\in [t_{i+1}-r, t_{i+1}-1]$.
Now ($\textbf{E}_i$) holds and we go on to \textbf{Phase} $i+1$.

Suppose after \textbf{Phase} $s$ we end up with an $r$-path, denoted as \[P_{s}=P_{s-1} v_{t_{s}-r}\cdots v_{t_{s}-1}x^{s}y^{s}v_{t_{s}+2}\cdots v_{t_{s}+r+1}  v_{t_{s}+r+2}\dots v_{\ell-r},\] and a sequence of $r$ subsets $N_{G}(v_{j-r}, \dots, v_{\ell-r})\cap S^{0}_{j}$, $j\in [\ell-r+1, \ell]$, each of size at least $\left(\tfrac{\beta}{2}\right)^{r}|S_j^0|\ge \left(\tfrac{\beta}{2}\right)^{r}\mu m$.
Again, we update
\begin{center}
$S_{j}^{0}$ $\leftarrow$ $(N_G(v_{j-r}, \dots, v_{\ell-r})\cap S^{0}_{j})\backslash V(P_{s})$ for every $j\in [\ell-r+1, \ell]$.
\end{center}
Then each $S^{0}_{j}$ has cardinality $|S^{0}_{j}|\ge \left(\tfrac{\beta}{2}\right)^{r}\mu m-|V(P_s)|\geq \left(\tfrac{\beta}{4}\right)^{r}\mu m$.
Recall that $S^{0}_{\ell-i+1}\<U_i$ for every $i\in[r]$ and all pairs $(U_i,U_j)$ are $\eps$-regular with density at least $\be$ for distinct $i,j\in[r]$.
Then as $\eps\ll \be,\mu,\frac{1}{r}$, Lemma \ref{lem2.5} implies that all pairs  $(S^{0}_{i}, S^{0}_{j})$ are $\eps'$-regular with density at least $\be-\eps$ for some $\eps':=\max\{2\eps, \tfrac{|S_{i}|}{|S^{0}_{i}|}\eps\}\le\left(\frac{4}{\beta}\right)^r\frac{\eps}{\mu}$ for distinct $i, j\in [\ell-r+1, \ell]$.
Then we can greedily find a copy of $K_{r}$, say $H_{3}$ with $V(H_{3})=\{v_{\ell-r+1}, \dots, v_{\ell}\}$, such that $v_{j}\in S^{0}_{j}$ for every $j\in [\ell-r+1, \ell]$.
Since $S^{0}_{\ell-i+1}=N_{G}(y_{i}, \dots, y_{r})\cap U_i$ for every $i\in [r]$ (see \textbf{Phase} $0$), we obtain an $r$-path $P_{s} v_{\ell-r+1}\cdots v_{\ell} y_{r}\cdots y_{1}$ of length at most $2\ell\leq 200r^{5}$ as desired.
This completes the proof of Lemma~\ref{connect}.
\end{proof}

Now it remains to prove Claim \ref{cl6.3}.

\begin{proof}[Proof of Claim \ref{cl6.3}]
Since $\de(G)\ge (1-\frac{1}{r}+\mu)n$, $v_{1}, \dots,v_{r}$ have at least $r\mu n$ common neighbors in $G$.
Hence, there exists a cluster, say $V_{1}\in V(R)\backslash W$, such that $|N_G(v_{1}, \dots,v_{r})\cap V_{1}|\geq \frac{r\mu n-|W|m}{k-|W|}\geq\frac{r\mu n-rm}{k}\geq\mu m$, as $\frac{1}{k}\ll \mu$.
Suppose we have obtained a maximal collection of clusters in $V(R)\backslash W$, say $V_{1}, \dots, V_s$ for some $s\ge 1$ such that they form a copy of $K_s$ in $R$ and $|N_G(v_{i}, \dots, v_{r})\cap V_{i}|\geq \mu m$ for every $i\in[s]$. Suppose for a contradiction that $s<r$.
For every $V'\in N_R(V_{1}, \dots, V_{s})\backslash W$, we have $|(N_G(v_{s+1}, \dots, v_{r})\backslash\bigcup_{V_{i}\in W}V_{i})\cap V'|<\mu m$.
Since $\frac{1}{k}\ll \mu, \frac{1}{r}$, it follows that
\begin{center}
$|N_R(V_{1}, \dots, V_{s})\backslash W|\geq s\left(1-\frac{1}{r}+\frac{\mu}{2}\right)k-(s-1)k-r\geq\left(\frac{r-s}{r}+\frac{s}{4}\mu\right)k>\frac{r-s}{r} k$.
\end{center}
Then we have
\begin{align*}
       |(N_G(v_{s+1}, \dots, v_{r})\backslash \bigcup_{V_{i}\in W}V_{i})| & <\mu m|N_R(V_{1}, \dots, V_{s})\backslash W|+|V(G)\backslash(\bigcup_{V_{i}\in W\cup N_R(V_{1}, \dots, V_{s})}V_{i})|\\
       &= \mu m|N_R(V_{1}, \dots, V_{s})\backslash W|+n-|W|m-|N_R(V_{1}, \dots, V_{s})\backslash W|m\\
       &<(\mu-1)m\frac{(r-s)k}{r}+n-|W|m.
     \end{align*}

\begin{flushleft}
Meanwhile, we have
\end{flushleft}
\begin{align*}
  |N_G(v_{s+1}, \dots, v_{r})\backslash \bigcup_{V_{i}\in W}V_{i})|\geq & (r-s)(1-\tfrac{1}{r}+\mu)n-(r-s-1)n-|W|m\\=&n-|W|m-(r-s)(\frac{1}{r}-\mu)n.
\end{align*}
Putting the bounds together, we obtain
\begin{center}
$(r-s)(\frac{1}{r}-\mu)n>(1-\mu)m\frac{(r-s)k}{r}$.
\end{center}
Since $mk\geq (1-\varepsilon)n$, we get $1-r\mu>(1-\mu)(1-\varepsilon)>1-2\mu$, a contradiction.
%
%
%
\end{proof}

%
%

\subsection{Proof of Lemma \ref{lem6.2}}
The proof of Lemma \ref{lem6.2} goes roughly as follows.
Let $\mf{x},\mf{y}$ be two disjoint $r$-tuples of vertices in $R$, each inducing a copy of $K_r$, say $H_{\mf{x}}$, $H_{\mf{y}}$, respectively.
We first build a sequence of copies of $K_{r}$ starting from $H_{\mf{x}}$ and ending with $H_{\mf{y}}$, where any two consecutive copies share $r-1$ common vertices.
Then we extend every copy of $K_{r}$ in the sequence to $K_{r+1}$ such that every two consecutive copies of $K_{r+1}$ share exactly $r-1$ common vertices (see Lemma~\ref{lem6.3}).
Consider the resulting sequence of copies of $K_{r+1}$, say $H_1, H_2, \ldots, H_{s}$, with $|V(H_{i})\cap V(H_{i+1})|=r-1$ for every $i\in[s-1]$, and then define two permutations $\mf{x}^+,\mf{y}^+$ of $V(H_1), V(H_s)$, respectively, satisfying $\mf{x}^+[1,r]=\mf{x}$ and $\mf{y}^+[2,r+1]=\mf{y}$.
The good walk comes from the following two phases:
\begin{enumerate}
  \item [$(1)$] For every $i\in [s-1]$, we build a good walk $\mathcal{Q}_{i}$ of order $\ell_i$ (see Lemma~\ref{lem6.6}), such that
      \begin{itemize}
        \item $\mathcal{Q}_{1}[1,r+1]=\mf{x}^+$ and $\mathcal{Q}_{i}[1,r+1]=\mathcal{Q}_{i-1}[\ell_{i-1}-r, \ell_{i-1}]$ when $i\ge 2$;
        \item $\mathcal{Q}_{i}[\ell_{i}-r, \ell_{i}]$ is a permutation of $V(H_{i+1})$, and $V(\mathcal{Q}_{i})\subseteq V(H_{i})\cup V(H_{i+1})$.
      \end{itemize}
  \item [$(2)$] We build a good walk $\mathcal{Q}_{s}$ which starts with $\mathcal{Q}_{s-1}[\ell_{s-1}-r, \ell_{s}]$, i.e. a permutation of $V(H_s)$ obtained in $(1)$, and ends with the fixed permutation $\mf{y}^+$ of $V(H_s)$ (see Lemma~\ref{lem6.5}).
\end{enumerate}
Hence we obtain a desired good walk with head $\mf{x}$ and tail $\mf{y}$ by piecing together all these walks $\mathcal{Q}_{1},\ldots,\mathcal{Q}_{s}$.
Now we give the three lemmas.

\begin{lemma}\label{lem6.3}
Given $\mu>0$, $r\in\mathbb{N}$, the following holds for sufficiently large $k$.
Let $R$ be a $k$-vertex graph with $\delta(R)\geq \left(1-\frac{1}{r}+\mu\right)k$, $H_{1}$ and $H_{2}$ be two vertex-disjoint copies of $K_{r}$ in $R$.
Then there exists a family of copies of $K_{r+1}$, say $\{B_{1}, B_{2},\dots, B_{\ell}\}$ for some $\ell\leq r^{2}$, with
\begin{itemize}
  \item $V(H_{1})\< V(B_{1})$, $V(H_{2})\< V(B_{\ell})$;
  \item $|V(B_{i})\cap V(B_{i+1})|=r-1$ for every $i\in [\ell-1]$.
\end{itemize}
\end{lemma}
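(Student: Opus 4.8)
\medskip

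The plan is to build the sequence of $(r+1)$-cliques in two stages: first connect $H_1$ and $H_2$ by a chain of $K_r$'s in which consecutive members overlap in $r-1$ vertices, and then thicken each $K_r$ in the chain to a $K_{r+1}$. For the first stage, I would argue that under $\delta(R)\geq(1-\frac1r+\mu)k$ the graph $R$ admits a ``sliding'' move on $r$-cliques: given a copy of $K_r$ on vertex set $S$ and a target vertex set $T$ of another $K_r$ with $|S\cap T|=r-1$, I want to show there is always a vertex $v$ making $S'=S\setminus\{u\}\cup\{v\}$ a $K_r$ for a chosen $u\in S$. This follows because any $r-1$ vertices in $R$ have at least $(r-1)(1-\frac1r+\mu)k-(r-2)k=(\frac1r+(r-1)\mu)k>0$ common neighbours, so one can always extend any $(r-1)$-set to a $K_r$, and in fact has $\Omega(k)$ choices. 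Iterating, one exchanges the vertices of $V(H_1)$ for those of $V(H_2)$ one at a time; each swap changes the $K_r$ by one vertex, so each step drops the overlap to $r-1$ before restoring it. This gives a chain $H_1=C_0, C_1,\dots,C_{\ell'}=H_2$ of $K_r$'s with $|V(C_i)\cap V(C_{i+1})|=r-1$ and $\ell'\leq r$ (one swap per vertex of $H_1$), though one must be slightly careful when $V(H_1)$ and $V(H_2)$ are disjoint — in that case each of the $r$ swaps introduces a genuinely new vertex, and $\ell'=r$ suffices; allowing intermediate repetitions if needed keeps $\ell'\leq r$, well within the bound.

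\medskip

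For the second stage, I thicken: each $C_i$ (a $K_r$ on some set $S_i$) has common neighbourhood of size at least $(\frac1r+r\mu)k$ by the degree bound, so I can pick $w_i\in N_R(S_i)$ with $B_i:=S_i\cup\{w_i\}$ a copy of $K_{r+1}$. The issue is ensuring $|V(B_i)\cap V(B_{i+1})|=r-1$: we have $|S_i\cap S_{i+1}|=r-1$ already, so I need $w_i\notin S_{i+1}$, $w_{i+1}\notin S_i$, and $w_i\neq w_{i+1}$ (and $w_i$ not equal to the one symmetric-difference vertex of $C_i, C_{i+1}$ in the wrong direction). Since at each step I am choosing $w_i$ from a linear-sized common neighbourhood and only need to avoid a bounded (at most $2r$) number of forbidden vertices, these choices can be made greedily along the chain. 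This produces $\{B_1,\dots,B_\ell\}$ with $\ell=\ell'\leq r\leq r^2$, $V(H_1)\subseteq V(B_1)$, $V(H_2)\subseteq V(B_\ell)$, and $|V(B_i)\cap V(B_{i+1})|=r-1$, as required. (If a single swap sometimes needs to be realized by two steps to keep cliques genuine, the bound $\ell\leq r^2$ absorbs that slack comfortably.)

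\medskip

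The main obstacle I anticipate is the bookkeeping around the overlap being \emph{exactly} $r-1$ rather than $\geq r-1$: naively sliding one vertex at a time makes consecutive $K_r$'s overlap in exactly $r-1$, but after thickening one must verify the added apex vertices don't accidentally restore a larger overlap or collapse two consecutive $(r+1)$-cliques into overlap $r$. All of this is handled by the fact that every relevant common neighbourhood is $\Omega(k)$ while the set of vertices to avoid at each step is of constant size $O(r)$, so greedy selection never gets stuck; the only real content is the degree computation showing $(r-1)$-sets (and $r$-sets) always have $\Omega(k)$ common neighbours, which is immediate from $\delta(R)\geq(1-\frac1r+\mu)k$ and inclusion–exclusion. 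I would also double-check the edge case $r=2$ separately, where $K_r$ is an edge and the construction is a walk-like path of triangles, to make sure the bounds $\ell\leq r^2$ and the overlap conditions read correctly.
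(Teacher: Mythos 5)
Your high-level plan (build a chain of $K_r$'s with consecutive overlap $r-1$, then thicken each to a $K_{r+1}$ by an apex vertex) matches the paper's structure, and your second stage is fine: at each step you pick the apex from a common neighbourhood of size $\Omega(k)$ while avoiding only $O(r)$ vertices, which is what the paper does. The problem is in the first stage.

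Your ``sliding'' move has a genuine gap. You propose to pass from $V(H_1)$ to $V(H_2)$ by swapping one $v_i$ for one $u_j$ at a time, citing the fact that any $(r-1)$-set has $\Omega(k)$ common neighbours. But this only shows that \emph{some} vertex can be swapped in; it gives no control over \emph{which} vertex. For the very first step you would need some $u_j$ to be adjacent to all of $v_2,\dots,v_r$, i.e.\ $u_j\in N_R(v_2,\dots,v_r)$. The degree bound $\delta(R)\geq(1-\frac1r+\mu)k$ does not guarantee this: $N_R(v_2,\dots,v_r)$ is a set of size roughly $k/r$, and nothing forces any of the fixed $r$ vertices $u_1,\dots,u_r$ to lie in it. The same obstruction recurs at every step, so the claim $\ell'\leq r$ (or even $\leq r^2$ ``absorbing the slack'') is not justified by any argument you give; you never explain how a target vertex can actually be reached. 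The hedge about ``realizing a swap by two steps'' does not resolve this — it is not a factor-of-two issue but a missing mechanism.

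The paper circumvents exactly this difficulty via Claim \ref{cl5.12}: it constructs a cascade of intermediate sequences $Q_1,\dots,Q_{r-1}$ of fresh ``transit'' vertices, where $Q_i$ consists of $r-i$ vertices chosen so that $Q_{i-1}Q_i$ induces an $(r-i)$-power of a path and $V(Q_i)\subseteq N_R(u_1,\dots,u_i)$. Sliding a window of length $r-i$ along $Q_{i-1}Q_i$ and adjoining $u_1,\dots,u_{i-1}$ yields a chain of $K_r$'s, and each block brings one more $u_j$ into the picture. This requires $\Theta(r^2)$ cliques (precisely $\tfrac{r^2+r+2}{2}$), which is why the bound in the lemma is $r^2$ rather than $O(r)$; the quadratic count is an essential feature, not slack. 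To repair your argument you would need some version of this transit-vertex mechanism, because a direct one-vertex-at-a-time replacement of $V(H_1)$ by $V(H_2)$ is simply not available under the given hypotheses.
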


\begin{proof}[Proof]
Given $\mu >0$, $r\in \mathbb{N}$ and $r\geq 2$, we choose $\frac{1}{k}\ll \mu, \frac{1}{r}$.
Let $R$ be a $k$-vertex graph with $\delta(R)\geq \left(1-\frac{1}{r}+\mu\right)k$.
Let $V(H_{1})=\{v_{1}, \dots, v_{r}\}$ and $V(H_{2})=\{u_{1}, \dots, u_{r}\}$.
It suffices to find a family of copies of $K_{r}$, say $\{A_{1}, \dots, A_{\ell}\}$ for some $\ell\leq r^{2}$ such that
\begin{itemize}
  \item $H_{1}=A_{1}$ and $H_{2}=A_{\ell}$;
  \item $|V(A_{i})\cap V(A_{i+1})|=r-1$ for every $i\in [\ell-1]$.
\end{itemize}
In fact, the desired copies $B_i$ can be easily obtained by extending $A_i$ to a copy of $K_{r+1}$ by a new vertex, which is possible because every $r$ vertices in $R$ have at least $r\mu k$ common neighbors.
To achieve this, we first prove the following claim.
We set $t_{0}:=0$, $t_{1}:=r-1$, $t_{i+1}:=t_{i}+(r-i-1)$ for every $i\in [r-2]$ and let $Q_{0}:=(v_{1}, \dots, v_{r})$.

\begin{figure}
\begin{center}
\tikzset{every picture/.style={line width=0.75pt}} 

\begin{tikzpicture}[x=0.75pt,y=0.75pt,yscale=-1,xscale=1]

\draw   (60,100) .. controls (60,86.19) and (89.1,75) .. (125,75) .. controls (160.9,75) and (190,86.19) .. (190,100) .. controls (190,113.81) and (160.9,125) .. (125,125) .. controls (89.1,125) and (60,113.81) .. (60,100) -- cycle ;
\draw   (320,100) .. controls (320,77.91) and (371.49,60) .. (435,60) .. controls (498.51,60) and (550,77.91) .. (550,100) .. controls (550,122.09) and (498.51,140) .. (435,140) .. controls (371.49,140) and (320,122.09) .. (320,100) -- cycle ;
\draw  [fill={rgb, 255:red, 66; green, 102; blue, 102 }  ,fill opacity=0.1 ] (160,190) .. controls (160,178.95) and (182.39,170) .. (210,170) .. controls (237.61,170) and (260,178.95) .. (260,190) .. controls (260,201.05) and (237.61,210) .. (210,210) .. controls (182.39,210) and (160,201.05) .. (160,190) -- cycle ;
\draw  [fill={rgb, 255:red, 189; green, 126; blue, 74 }  ,fill opacity=0.1 ] (310,180) .. controls (310,171.72) and (325.67,165) .. (345,165) .. controls (364.33,165) and (380,171.72) .. (380,180) .. controls (380,188.28) and (364.33,195) .. (345,195) .. controls (325.67,195) and (310,188.28) .. (310,180) -- cycle ;
\draw  [fill={rgb, 255:red, 16; green, 125; blue, 172 }  ,fill opacity=0.1 ] (440,160) .. controls (440,154.48) and (448.95,150) .. (460,150) .. controls (471.05,150) and (480,154.48) .. (480,160) .. controls (480,165.52) and (471.05,170) .. (460,170) .. controls (448.95,170) and (440,165.52) .. (440,160) -- cycle ;
\draw    (80,100) ;
\draw [shift={(80,100)}, rotate = 0] [color={rgb, 255:red, 0; green, 0; blue, 0 }  ][fill={rgb, 255:red, 0; green, 0; blue, 0 }  ][line width=0.75]      (0, 0) circle [x radius= 1.34, y radius= 1.34]   ;
\draw    (170,100) ;
\draw [shift={(170,100)}, rotate = 0] [color={rgb, 255:red, 0; green, 0; blue, 0 }  ][fill={rgb, 255:red, 0; green, 0; blue, 0 }  ][line width=0.75]      (0, 0) circle [x radius= 1.34, y radius= 1.34]   ;
\draw    (110,100) ;
\draw [shift={(110,100)}, rotate = 0] [color={rgb, 255:red, 0; green, 0; blue, 0 }  ][fill={rgb, 255:red, 0; green, 0; blue, 0 }  ][line width=0.75]      (0, 0) circle [x radius= 1.34, y radius= 1.34]   ;
\draw    (140,100) ;
\draw [shift={(140,100)}, rotate = 0] [color={rgb, 255:red, 0; green, 0; blue, 0 }  ][fill={rgb, 255:red, 0; green, 0; blue, 0 }  ][line width=0.75]      (0, 0) circle [x radius= 1.34, y radius= 1.34]   ;
\draw    (360,100) ;
\draw [shift={(360,100)}, rotate = 0] [color={rgb, 255:red, 0; green, 0; blue, 0 }  ][fill={rgb, 255:red, 0; green, 0; blue, 0 }  ][line width=0.75]      (0, 0) circle [x radius= 1.34, y radius= 1.34]   ;
\draw    (410,100) ;
\draw [shift={(410,100)}, rotate = 0] [color={rgb, 255:red, 0; green, 0; blue, 0 }  ][fill={rgb, 255:red, 0; green, 0; blue, 0 }  ][line width=0.75]      (0, 0) circle [x radius= 1.34, y radius= 1.34]   ;
\draw    (460,100) ;
\draw [shift={(460,100)}, rotate = 0] [color={rgb, 255:red, 0; green, 0; blue, 0 }  ][fill={rgb, 255:red, 0; green, 0; blue, 0 }  ][line width=0.75]      (0, 0) circle [x radius= 1.34, y radius= 1.34]   ;
\draw    (510,100) ;
\draw [shift={(510,100)}, rotate = 0] [color={rgb, 255:red, 0; green, 0; blue, 0 }  ][fill={rgb, 255:red, 0; green, 0; blue, 0 }  ][line width=0.75]      (0, 0) circle [x radius= 1.34, y radius= 1.34]   ;
\draw    (180,190) ;
\draw [shift={(180,190)}, rotate = 0] [color={rgb, 255:red, 0; green, 0; blue, 0 }  ][fill={rgb, 255:red, 0; green, 0; blue, 0 }  ][line width=0.75]      (0, 0) circle [x radius= 1.34, y radius= 1.34]   ;
\draw    (210,190) ;
\draw [shift={(210,190)}, rotate = 0] [color={rgb, 255:red, 0; green, 0; blue, 0 }  ][fill={rgb, 255:red, 0; green, 0; blue, 0 }  ][line width=0.75]      (0, 0) circle [x radius= 1.34, y radius= 1.34]   ;
\draw    (240,190) ;
\draw [shift={(240,190)}, rotate = 0] [color={rgb, 255:red, 0; green, 0; blue, 0 }  ][fill={rgb, 255:red, 0; green, 0; blue, 0 }  ][line width=0.75]      (0, 0) circle [x radius= 1.34, y radius= 1.34]   ;
\draw [color={rgb, 255:red, 195; green, 39; blue, 43 }  ,draw opacity=1 ]   (110,100) -- (180,190) ;
\draw [color={rgb, 255:red, 195; green, 39; blue, 43 }  ,draw opacity=1 ]   (140,100) -- (180,190) ;
\draw [color={rgb, 255:red, 195; green, 39; blue, 43 }  ,draw opacity=1 ]   (170,100) -- (180,190) ;
\draw  [color={rgb, 255:red, 66; green, 102; blue, 102 }  ,draw opacity=1 ] (155.37,213.18) .. controls (143.49,192.69) and (183.31,147.42) .. (244.3,112.08) .. controls (305.3,76.74) and (364.37,64.7) .. (376.24,85.2) .. controls (388.12,105.69) and (348.3,150.96) .. (287.31,186.3) .. controls (226.32,221.64) and (167.24,233.68) .. (155.37,213.18) -- cycle ;
\draw  [color={rgb, 255:red, 189; green, 126; blue, 74 }  ,draw opacity=1 ] (311.03,132.97) .. controls (334.83,98.39) and (378.38,78.14) .. (408.3,87.75) .. controls (438.23,97.35) and (443.2,133.17) .. (419.4,167.75) .. controls (395.61,202.33) and (352.06,222.57) .. (322.14,212.97) .. controls (292.21,203.37) and (287.24,167.55) .. (311.03,132.97) -- cycle ;
\draw  [color={rgb, 255:red, 16; green, 125; blue, 172 }  ,draw opacity=1 ] (358.96,129.22) .. controls (338.59,97.06) and (352.32,69.47) .. (389.64,67.6) .. controls (426.96,65.73) and (473.73,90.29) .. (494.1,122.44) .. controls (514.48,154.6) and (500.74,182.18) .. (463.43,184.06) .. controls (426.11,185.93) and (379.34,161.37) .. (358.96,129.22) -- cycle ;
\draw    (330,180) ;
\draw [shift={(330,180)}, rotate = 0] [color={rgb, 255:red, 0; green, 0; blue, 0 }  ][fill={rgb, 255:red, 0; green, 0; blue, 0 }  ][line width=0.75]      (0, 0) circle [x radius= 1.34, y radius= 1.34]   ;
\draw    (360,180) ;
\draw [shift={(360,180)}, rotate = 0] [color={rgb, 255:red, 0; green, 0; blue, 0 }  ][fill={rgb, 255:red, 0; green, 0; blue, 0 }  ][line width=0.75]      (0, 0) circle [x radius= 1.34, y radius= 1.34]   ;
\draw    (460,160) ;
\draw [shift={(460,160)}, rotate = 0] [color={rgb, 255:red, 0; green, 0; blue, 0 }  ][fill={rgb, 255:red, 0; green, 0; blue, 0 }  ][line width=0.75]      (0, 0) circle [x radius= 1.34, y radius= 1.34]   ;
\draw [color={rgb, 255:red, 195; green, 39; blue, 43 }  ,draw opacity=1 ]   (210,190) .. controls (226.44,230.33) and (311.78,229.67) .. (330,180) ;
\draw [color={rgb, 255:red, 195; green, 39; blue, 43 }  ,draw opacity=1 ]   (240,190) .. controls (261.78,203.67) and (290,210) .. (330,180) ;
\draw [color={rgb, 255:red, 195; green, 39; blue, 43 }  ,draw opacity=1 ]   (360,180) -- (460,160) ;

\draw (70,80.4) node [anchor=north west][inner sep=0.75pt]  [font=\footnotesize]  {$v_{1}$};
\draw (100,80.4) node [anchor=north west][inner sep=0.75pt]  [font=\footnotesize]  {$v_{2}$};
\draw (130.67,80.4) node [anchor=north west][inner sep=0.75pt]  [font=\footnotesize]  {$v_{3}$};
\draw (160,80.4) node [anchor=north west][inner sep=0.75pt]  [font=\footnotesize]  {$v_{4}$};
\draw (360,80.4) node [anchor=north west][inner sep=0.75pt]  [font=\footnotesize]  {$u_{1}$};
\draw (410,80.4) node [anchor=north west][inner sep=0.75pt]  [font=\footnotesize]  {$u_{2}$};
\draw (460,80.4) node [anchor=north west][inner sep=0.75pt]  [font=\footnotesize]  {$u_{3}$};
\draw (510,80.4) node [anchor=north west][inner sep=0.75pt]  [font=\footnotesize]  {$u_{4}$};
\draw (225.33,152.07) node [anchor=north west][inner sep=0.75pt]    {$Q_{1}$};
\draw (350.67,147.73) node [anchor=north west][inner sep=0.75pt]    {$Q_{2}$};
\draw (477.33,139.07) node [anchor=north west][inner sep=0.75pt]    {$Q_{3}$};
\end{tikzpicture}
\end{center}
\caption{Illustration of Claim \ref{cl5.12} with $r=4$}
\end{figure}
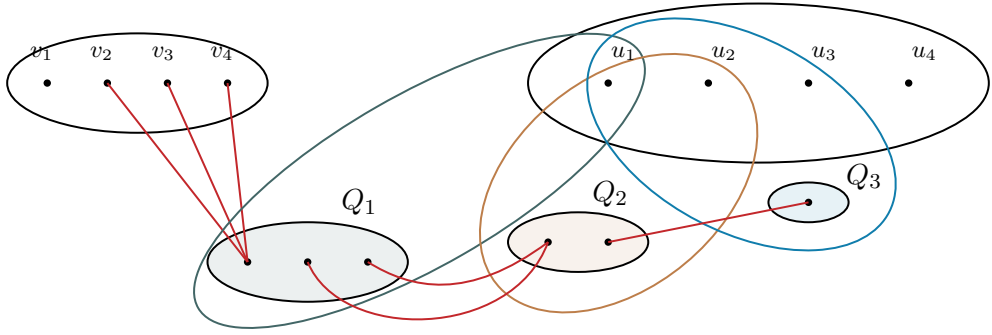

\begin{claim}\label{cl5.12}
There exist~$r-1$~sequences of vertices in $R$ (mutually vertex disjoint), say $Q_{1}, \dots, Q_{r-1}$ with $Q_{i}:=(x_{t_{i-1}+1}, \dots, x_{t_{i}})$, $|Q_{i}|=r-i$, satisfying that $V(Q_{i})\< N_R(u_{1}, \dots, u_{i})$ and $Q_{i-1}Q_{i}$ induces an $(r-i)$-power of a path in $R$ for every $i\in [r-1]$.
\end{claim}

\begin{pr}
We shall iteratively build each $Q_i$ for $i\in[r-1]$ as required.
For the case $i=1$, since $|N_{R}(v_{2}, \dots, v_{r}, u_{1})|\geq r\mu k$,
we arbitrarily take a common neighbor of $v_{2}, \dots, v_{r}$ and $u_{1}$, say $x_{1}$.
Similarly we can iteratively pick another $r-2$ distinct vertices say $x_{2},\ldots,x_{r-2}$, such that each $x_j$ is a common neighbor of $v_{j+1}, \dots, v_{r}, x_{1}, \dots, x_{j-1}$ and $u_{1}$ whilst avoiding all the vertices in $\{u_1,\ldots, u_r\}$.
Then $Q_{1}:=(x_{1}, \dots, x_{r-1})$ is desired as $Q_{0}Q_{1}=v_1v_2\ldots v_r x_1\ldots x_{r-1}$ actually induces an $(r-1)$-power of a path in $R$.

Suppose we have obtained $Q_1,\ldots,Q_q$ as above with $Q_{i}:=(x_{t_{i-1}+1}, \dots, x_{t_{i}})$, $V(Q_{i})\< N_R(u_{1}, \dots, u_{i})$, and $Q_{i-1}Q_{i}$ induces an $(r-i)$-power of a path in $R$ for every $i\in[q]$.
Now we will find the existence of $Q_{q+1}$.
Similarly, since $|\bigcup\limits^q_{i=1}U_i|\leq q(r-1)<r\mu k$, we can iteratively pick $r-i-1$ distinct vertices say $x_{t_{q}+1}, \dots, x_{t_{q+1}}$
such that each $x_{t_{q}+j}$ ($j\in[r-i-1]$) is a common neighbor of the $r$ vertices $x_{t_{q-1}+j+1}, \dots, x_{t_{q}}, \dots, x_{t_{q}+j-1}, u_{1}, \dots, u_{q+1}$.
This yields the desired $Q_{q+1}=(x_{t_{q}+1}, \dots, x_{t_{q+1}})$.
\end{pr}

Next, we find all desired copies of $K_{r}$.
By Claim \ref{cl5.12}, for every $i\in [r-1]$, every consecutive $r-i+1$ vertices in the sequence $Q_{i-1}Q_{i}$ together with $\{u_{1}, \dots, u_{i-1}\}$ form a copy of $K_{r}$, which yields a sequence of copies of $K_{r}$ in order, say $\mathcal{L}^{i}:=(A^{i}_{1}, \dots, A^{i}_{r-i+1})$.
Observe that every two consecutive copies of $K_{r}$ in $\mathcal{L}^{i}$ share $r-1$ vertices consisting of $u_{1}, \dots, u_{i-1}$ and $r-i$ consecutive vertices in the sequence $Q_{i-1}Q_{i}$.
Also, since $V(Q_{i})\< N_R(u_{1}, \dots, u_{i})$, we have that $V(Q_{r-1})\cup\{u_{1}, \dots, u_{r-1}\}$ induces a copy of $K_r$, denoted as $A^r_1$.
We claim that the resulting sequence 
\[A^{1}_{1} \dots A^{1}_{r} \dots A^{r-1}_{1} A^{r-1}_{2} A^r_1 H_{2}\] is desired.
In fact, since every two consecutive copies in each $\mathcal{L}_{i}$ share $r-1$ vertices, it remains to show that the last element of $\mathcal{L}_{i}$ and the first element of $\mathcal{L}_{i+1}$ share $r-1$ vertices. This easily follows as $V(A^{i}_{r-i+1})=\{x_{t_{i-1}}, x_{t_{i-1}+1}, \dots, x_{t_{i}}, u_{1}, \dots, u_{i-1}\}$ and $V(A^{i+1}_{1})=\{x_{t_{i-1}+1}, \dots, x_{t_{i}}, u_{1}, \dots, u_{i}\}$ for every $i\in [r-1]$.
Observe that $|V(A^r_1)\cap V(H_{2})|=r-1$.

The proof is completed by renaming the copies of $K_{r}$ in the above sequence as $A_{1}, \dots, A_{\ell}$ in order, where $\ell:=2+\sum^{r-1}_{i=1}|\mathcal{L}^{i}|=\frac{r^{2}+r+2}{2}\leq r^{2}$.
\end{proof}

\begin{lemma}\label{lem6.5}
Let $H$ be a copy of $K_{r+1}$.
Then for any $\pi_{1}, \pi_{2}\in \mathbf{S}_{V(H)}$, there exists a good walk $\mathcal{Q}$ in $H$ with order $\ell\leq 65r^{3}$ such that
\begin{itemize}
  \item $\mathcal{Q}[1,r+1]=\pi_{1}$ and $\mathcal{Q}[\ell-r, \ell]=\pi_{2}$;
  \item There is no lazy element among the last $20r+20$ elements of the walk.
\end{itemize}
\end{lemma}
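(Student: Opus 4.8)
The plan is to realise $\mathcal Q$ as a concatenation of short pieces, each altering the last $r+1$ entries of the walk-so-far — a ``current window'' — by a single transposition, exploiting that $H\cong K_{r+1}$, so that the \emph{only} restriction on a walk in $H$ is the combinatorial one in Definition~\ref{def6.0}. Call a block of $r+1$ consecutive entries \emph{clean} if it is a permutation of $V(H)$, i.e.\ an element of $\mathbf{S}_{V(H)}$.

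First I would isolate two basic moves. \textbf{Rotations:} if the walk ends with a clean window $\sigma$, appending the unique vertex of $V(H)$ missing from $\{\sigma(2),\dots,\sigma(r+1)\}$ (namely $\sigma(1)$) again produces a clean window, the cyclic rotation $(\sigma(2),\dots,\sigma(r+1),\sigma(1))$, and creates no lazy element; iterating, any cyclic rotation of $\sigma$ is reachable by lazy-free steps. \textbf{A lazy swap:} if the walk ends with $\sigma=(a_1,\dots,a_{r+1})$, one may instead append $a_{r+1}$ (a lazy step) and then append $a_2,a_1,a_3,a_4,\dots,a_r$ in that order; a direct check shows each of the $r+1$ intermediate $(r+1)$-windows has size $r$ with its sole repeated vertex ($a_{r+1}$) in adjacent positions and misses exactly one vertex, after which the walk again ends with a clean window, equal to $(a_{r+1},a_2,a_1,a_3,\dots,a_r)$ — that is, ``cyclically rotate $\sigma$ by one, then swap coordinates $2$ and $3$''. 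Combining a rotation block, one lazy swap and a rotation block therefore realises $\sigma\mapsto\sigma\circ(j,j+1)$ for any prescribed $j\in[r]$, at a cost of $O(r)$ steps and one lazy element.

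I would then build $\mathcal Q$ as follows: write down $\pi_1$; perform one lazy swap with no preamble, so that its lazy step falls at position $r+1$, reaching a clean window $\pi_1'$; since adjacent transpositions generate $\mathbf{S}_{r+1}$, fix a sequence $\pi_1'=\rho_0,\rho_1,\dots,\rho_t=\pi_2$ with $t\le\binom{r+1}{2}$ and $\rho_i=\rho_{i-1}\circ(j_i,j_i+1)$, and for $i=1,\dots,t$ successively insert a block of between $20(r+1)$ and $21(r+1)$ lazy-free rotation steps — which both separates consecutive lazy elements and sets up the rotation for the next swap — followed by the one-lazy-step gadget turning the current window from (a rotation of) $\rho_{i-1}$ into $\rho_i$; finally append $20(r+1)$ lazy-free rotation steps, which leaves the current window equal to $\pi_2$ since $r+1\mid 20(r+1)$. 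This $\mathcal Q$ is good: its first lazy element is at position $r+1$; any two lazy elements are separated by an inserted block, hence by at least $20(r+1)$ positions; and the last $20r+20$ entries lie inside the final lazy-free block. A crude count with $\binom{r+1}{2}\le r^2$ bounds $\ell$ by $O(r^3)$, indeed by $65r^3$, for every $r\ge 2$; the case $\pi_1=\pi_2$ is immediate ($\mathcal Q=\pi_1$), and the few smallest values of $r$ (where the doubled vertex exits a window before the generic count predicts) can be verified directly.

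The hard part will be verifying the lazy-swap gadget, i.e.\ that every $(r+1)$-window appearing during and just after a lazy step has size $r$ or $r+1$ and that its unique repeated vertex occupies only adjacent positions. This is delicate precisely because a lazy step cannot be ``undone'': the doubled vertex stays in every window for the next $r$ steps, during which each vertex one is permitted to append is constrained to a two-element set, and these must be chosen so as to (a) realise the intended adjacent transposition, (b) never create a second repeated vertex, and (c) return to a clean window exactly when the doubled vertex exits all windows. In particular the naive ``repeat the last vertex whenever convenient'' strategy is illegal, which is what forces both the rigid shape of the gadget and the $\sim r$-step cooldown trailing each lazy step; getting the bookkeeping of rotations/buffers to simultaneously respect the ``first lazy element at $S_{r+1}$'' and ``$20(r+1)$-separation'' conditions is the remaining (routine) chore.
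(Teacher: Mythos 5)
Your proposal is correct and shares the paper's high-level architecture — reduce $\pi_1\mapsto\pi_2$ to a product of at most $\binom{r+1}{2}$ adjacent transpositions, realise each by a short gadget containing a single lazy step, and separate consecutive lazy steps by long ``rotation'' stretches — but the gadget itself is genuinely different. The paper's gadget for the transposition $(q,q+1)$ with $q\le r-1$ is $\pi_3\,(v_{r+1})\,\pi_4\pi_4\cdots$ (one lazy step followed immediately by $21$ literal copies of the \emph{target} permutation $\pi_4=\pi_3\circ(q,q+1)$), and for $q=r$ it detours through an auxiliary permutation $\pi_5$ that swaps the first and last coordinates, so there are two cases and two lazy steps in the worst case. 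Your gadget instead fixes a single ``lazy swap'' move (append $a_{r+1},a_2,a_1,a_3,\dots,a_r$, which you correctly identify as rotate-right-then-swap-$(2,3)$) and conjugates it by lazy-free cyclic rotations to hit any $(j,j+1)$ uniformly, with one lazy step per transposition. This is slightly cleaner — no case split on $q$ — at the cost of the bookkeeping of rotation offsets, which you flag as routine and which indeed is: the needed residue of the rotation block mod $r+1$ can always be realised within the $[20(r+1),21(r+1)]$ window you allow. Note that the paper's ``repeat $\pi_4$ twenty-one times'' is, step-by-step, exactly $21(r+1)$ of your rotation moves, so the separation mechanism is really the same; your initial ``lazy swap with no preamble'' serves exactly the role of the paper's $\pi_3(v_{r+1})\pi_4$ prefix in forcing the first lazy element to be $S_{r+1}$, and both arrangements pin down the last $20(r+1)$ positions as lazy-free. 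The $\ell\le 65r^3$ bound comes out with room to spare in both accountings for $r\ge 2$.
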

\begin{proof}[Proof]
Let $V(H)=\{v_{1}, \dots, v_{r+1}\}$.
For any $\pi_{1}, \pi_{2}\in \mathbf{S}_{V(H)}$, we can transform $\pi_{1}$ to $\pi_{2}$ by recursively swapping two consecutive elements at most $r^{2}$ times.
Let $q_{1}, \dots, q_{s}$ be the sequence of all the positions where we make swapping as above with $q_{i}\in [r]$ for every $i\in[s]$.
To prove Lemma \ref{lem6.5}, it suffices to prove that for every $q\in[r]$ and every $\pi_{3}\in \mathbf{S}_{V(H)}$, there exists a good walk $\mathcal{Q}_q$ with two ends $\pi_{3}$ and $\pi_{4}$ where $\pi_{4}=\pi_{3}\circ(q, q+1)$.
Without loss of generality, we assume $\pi_{3}=(v_{1}, \dots, v_{q-1}, v_{q}, v_{q+1}, \dots, v_{r+1})$ and $\pi_{4}=(v_{1}, \dots, v_{q-1}, v_{q+1}, v_{q}, \dots, v_{r+1})$.
We build the walk $\mathcal{Q}_q=(S_{1}, S_{2}, \dots)$ as follows.

\begin{itemize}
  \item If $q\in[r-1]$, then we build $\mathcal{Q}_q=\pi_3 (v_{r+1})\pi_4\pi_4\cdots $
      where we repeat $\pi_4$ 21 times to guarantee that there is no lazy element among the last $20r+20$ elements.
  Observe that $\mathcal{Q}_q$ is a good walk with two ends $\pi_{3}$ and $\pi_{4}$, exactly one lazy element $S_{r+1}$, and $|\mathcal{Q}_q|=22r+23$.
  \item If $q=r$, then let $\pi_5:=(v_{r+1}, v_2, \cdots, v_r, v_1)$, and we build $\mathcal{Q}_q=\pi_3\pi_5\cdots\pi_5\pi_4\cdots\pi_4$ where we repeat $\pi_5$ 21 times to guarantee that the distance between two lazy elements is bigger than $20(r+1)$, and repeat $\pi_4$ 21 times to guarantee that there is no lazy element among the last $20r+20$ elements.
     Observe that $\mathcal{Q}_q$ is a good walk with two ends $\pi_{3}$ and $\pi_{4}$, two lazy elements $S_{r+1}$ and $S_{22r+22}$, and $|\mathcal{Q}_r|=43r+43$.
\end{itemize}
Hence, for any $\pi_{1}, \pi_{2}\in \mathbf{S}_{V(H)}$, we can transform $\pi_{1}$ to $\pi_{2}$ by performing a sequence of at most $r^{2}$ switchings, each of which switches two consecutive elements as above.
The concatenation of the corresponding good walks gives rise to a good walk $\mathcal{Q}$ with two ends $\pi_{1}$ and $\pi_{2}$, where we have $|\mathcal{Q}|\leq (43r+43)r^{2}\leq 65r^{3}$.
\end{proof}

\begin{lemma}\label{lem6.6}
Let $H_{1}, H_{2}$ be two copies of $K_{r+1}$ in $R$ with $|V(H_{1})\cap V(H_{2})|=r-1$.
Then for any given permutation $\pi$ of $V(H_{1})$ there exists a good walk $\mathcal{Q}$ of order $\ell\le 100r^3$ such that $V(\mathcal{Q})=V(H_1)\cup V(H_2)$, $\mathcal{Q}[1,r+1]=\pi$ and $\mathcal{Q}[\ell-r, \ell]$ is a permutation of $V(H_{2})$.
\end{lemma}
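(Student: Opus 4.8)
The plan is to realise $\mathcal{Q}$ as the concatenation of two pieces: a good walk inside $H_1$ that rearranges the prescribed permutation $\pi$ into a carefully chosen permutation $\sigma$ of $V(H_1)$, followed by a short, explicitly written block of $r+1$ clusters that ``slides'' from $H_1$ to $H_2$. The whole difficulty is that the reduced graph $R$ carries no information about edges between $V(H_1)\setminus V(H_2)$ and $V(H_2)\setminus V(H_1)$, so the sliding block must be arranged so that no window of $r+1$ consecutive clusters ever contains a cluster from each of these two pairs.

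Write $C:=V(H_1)\cap V(H_2)=\{c_1,\dots,c_{r-1}\}$, $V(H_1)=C\cup\{a_1,a_2\}$ and $V(H_2)=C\cup\{b_1,b_2\}$; note that these $r+3$ clusters are pairwise distinct. Set $\sigma:=(a_1,a_2,c_2,c_3,\dots,c_{r-1},c_1)$, a permutation of $V(H_1)$, and $B:=(c_1,b_1,b_2,c_2,c_3,\dots,c_{r-1})$, a length-$(r+1)$ sequence of clusters in $V(H_1)\cup V(H_2)$ (so $B=(c_1,b_1,b_2)$ for $r=2$ and $B=(c_1,b_1,b_2,c_2)$ for $r=3$). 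First apply Lemma~\ref{lem6.5} inside $H_1$ with $(\pi_1,\pi_2)=(\pi,\sigma)$ to obtain a good walk $\mathcal{Q}_A$ with $V(\mathcal{Q}_A)=V(H_1)$, order $m:=|\mathcal{Q}_A|\le 65r^3$, $\mathcal{Q}_A[1,r+1]=\pi$, $\mathcal{Q}_A[m-r,m]=\sigma$, and no lazy element among its last $20(r+1)$ entries; if $\pi\ne\sigma$ then $\mathcal{Q}_A$ has a lazy element and hence, by goodness, one at position $r+1$, while if $\pi=\sigma$ we take instead $\mathcal{Q}_A:=\sigma$ (so $m=r+1$). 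Now put $\mathcal{Q}:=\mathcal{Q}_A B$, of order $\ell=m+r+1\le 100r^3$; then $\mathcal{Q}[1,r+1]=\pi$, and $V(\mathcal{Q})=V(H_1)\cup V(H_2)$ since the clusters used in $B$ are exactly $C\cup\{b_1,b_2\}=V(H_2)$.

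It then remains to verify three points. (1) $\mathcal{Q}$ is a good walk: since $\mathcal{Q}_A[m]=c_1=B[1]$, position $m$ is a lazy element of $\mathcal{Q}$, and it is the only lazy element among the positions of $B$; if $m=r+1$ this is the first lazy element and it sits at position $r+1$, while otherwise $\mathcal{Q}$ inherits from $\mathcal{Q}_A$ a first lazy element at position $r+1$ and the new one at position $m$ lies at distance at least $20(r+1)$ from the last lazy element of $\mathcal{Q}_A$ (which is at some position $\le m-20(r+1)$). (2) Every window of $r+1$ consecutive clusters induces a clique of $R$, with any repeated cluster occurring at adjacent positions: reading $\mathcal{Q}$ from $\mathcal{Q}[m-r,m]$ onward, the windows have cluster sets, in order, $V(H_1)=C\cup\{a_1,a_2\}$, then the $r$ size-$r$ sets
\[
C\cup\{a_2\},\ \ C\cup\{b_1\},\ \ (C\setminus\{c_2\})\cup\{b_1,b_2\},\ \ \dots,\ \ (C\setminus\{c_{r-1}\})\cup\{b_1,b_2\},
\]
and finally $V(H_2)=C\cup\{b_1,b_2\}$; each consecutive pair here differs by deleting one cluster and adding one, each of the size-$r$ sets contains the repeated cluster $c_1$, and each set is contained in $V(H_1)$ or in $V(H_2)$. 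In particular no window contains both some $a_i$ and some $b_j$, so every window induces a subclique of $H_1$ or of $H_2$, and the only within-window repetition, $c_1$ at positions $m$ and $m+1$, is adjacent. (3) $\mathcal{Q}[\ell-r,\ell]$ is a permutation of $V(H_2)$, as this last window equals the set $C\cup\{b_1,b_2\}=V(H_2)$.

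The conceptual heart, and the main obstacle, is the choice of $B$: it is precisely the hypothesis $|V(H_1)\cap V(H_2)|=r-1$ that lets the common core $C$ have size $r-1$, so that each of the $r$ intermediate size-$r$ windows forced by the single lazy element can be taken with cluster set $C$ together with one or two further clusters -- always a clique -- without ever appealing to a (possibly non-existent) edge of $R$ between $\{a_1,a_2\}$ and $\{b_1,b_2\}$. The rest is routine: confirming the displayed list of windows by tracking the cluster indexing of Definition~\ref{def6.0}, checking the degenerate small cases $r\in\{2,3\}$, and the minor bookkeeping already indicated.
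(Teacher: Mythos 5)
Your proof establishes Lemma~\ref{lem6.6} as literally stated, and its structure coincides with the paper's: apply Lemma~\ref{lem6.5} inside $H_1$ to reach a convenient permutation $\sigma$ of $V(H_1)$, then append a sliding block of $r+1$ clusters passing through the common core $C$, arranged so that no $(r+1)$-window contains both some $a_i$ and some $b_j$; your verification of the windows and of the lazy-element spacing is correct. The difference is that the paper appends the permutation $\pi_3$ of $V(H_2)$ \emph{twenty-one times} ($\mathcal{Q}_2=\pi_2\pi_3\pi_3\cdots$), whereas you append the single block $B$ once.

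That repetition is not cosmetic, and its omission is a genuine gap from the viewpoint of how Lemma~\ref{lem6.6} is actually used. In the proof of Lemma~\ref{lem6.2}, walks produced by Lemma~\ref{lem6.6} are pieced together end-to-end by identifying $(r+1)$-tuple ends. After identifying the last $r+1$ entries of $\mathcal{Q}_i$ with the first $r+1$ of $\mathcal{Q}_{i+1}$, the first lazy element of $\mathcal{Q}_{i+1}$ (at its position $r+1$) sits at position $\ell_i$ of the combined walk. For the concatenation to remain a good walk, the last lazy element of $\mathcal{Q}_i$ must therefore lie at distance at least $20(r+1)$ from its own end. Your walk places its last lazy element at position $\ell-(r+1)$, only $r+1$ from the end, so two consecutive applications of your version would produce two lazy elements at distance $r+1$, violating goodness. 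The paper's proof, by repeating $\pi_3$ twenty-one times, delivers the unstated but necessary extra property that there is no lazy element among the last $20(r+1)$ entries (exactly the analogue of what Lemma~\ref{lem6.5} explicitly guarantees). The repair is small: replace $B$ with $B$ repeated $21$ times; since $B$ is a permutation of $V(H_2)$ whose first and last entries differ, the repetitions create no new lazy elements, every window stays inside $V(H_2)$, and the length bound $\le 100r^3$ still holds.
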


\begin{proof}[Proof]
Let $V(H_{1})=\{v_{1}, v_{2}, \dots, v_{r+1}\}$ and $V(H_{2})=\{v_{3}, \dots, v_{r+1}, w_{1}, w_{2}\}$.
Let $\pi_{1}$ be any permutation of $V(H_{1})$.
We write $\pi_2=(v_{1}, v_{2}, \dots, v_{r+1})$ and $\pi_3=(v_{r+1}, w_{1}, w_{2}, v_{3}, \dots, v_{r})$.
Then Lemma~\ref{lem6.5} applied to $H_1$ gives a good walk say $\mathcal{Q}_1$ with $|\mathcal{Q}_1|\leq 65r^{3}$ which starts with $\pi_{1}$ and ends with $\pi_2$ and contains no lazy element among the last $20r+20$ elements.
Now we build
$\mathcal{Q}_2=\pi_2\pi_3\pi_3\cdots$ where we repeat $\pi_3$ 21 times to guarantee that there is no lazy element among the last $20r+20$ elements.
It is easy to verify that $\mathcal{Q}_2$ is a good walk with only one lazy element $S_{r+1}$.
Thus by piecing together $\mathcal{Q}_1,\mathcal{Q}_2$ and identifying the segment $\pi_2$, one can obtain a desired good walk of order less than $65r^{3}+22r+22\leq 100r^3$.
\end{proof}

Now we are ready to prove Lemma~\ref{lem6.2}.

\begin{proof}[Proof of Lemma \ref{lem6.2}]
Given $\mu>0$, $r\in\mathbb{N}$, we choose $\frac{1}{k}\ll\mu, \frac{1}{r}$.
Let $R$ be a $k$-vertex graph with $\delta(R)\geq \left(1-\frac{1}{r}+\mu\right)k$ and $\mf{x}$, $\mf{y}$ be two disjoint $r$-tuples of vertices in $R$, each inducing a copy of $K_{r}$, say $H_{\mf{x}}, H_{\mf{y}}$ with $V(H_{\mf{x}})=\{v_{1}, \dots, v_{r}\}$ and $V(H_{\mf{y}})=\{u_{1}, \dots, u_{r}\}$.
Without loss of generality, we write $\mf{x}=(v_{1}, \dots, v_{r})$ and $\mf{y}=(u_{r}, \dots, u_{1})$.

Applying Lemma \ref{lem6.3} to $R$, there exists a family of copies of $K_{r+1}$, say $\{H_{1}, \dots, H_{s}\}$ for some $s\leq r^{2}$, with $V(H_{\mf{x}})\< V(H_{1})$, $V(H_{\mf{y}})\< V(H_s)$ and $|V(H_{i})\cap V(H_{i+1})|=r-1$ for every $i\in[s-1]$.
Denote by $\mf{x}^+:=(v_{1}, \dots, v_{r}, v_{r+1})$, $\mf{y}^+:=(u_{r+1}, u_{r}, \dots, u_{1})$ the permutations of $V(H_1)$ and $V(H_s)$, respectively.

As mentioned above, by iteratively applying Lemma \ref{lem6.6} $s-1$ times, we obtain a collection of good walks $\mathcal{Q}_{i}$ of order $\ell_i\le 100r^3$, $i\in[s-1]$,
such that $\mathcal{Q}_{1}[1,r+1]=\mf{x}^+$ and $\mathcal{Q}_{i}[1,r+1]=\mathcal{Q}_{i-1}[\ell_{i-1}-r, \ell_{i-1}]$ when $i\ge 2$.
In particular, $V(\mathcal{Q}_{i})\subseteq V(H_{i})\cup V(H_{i+1})$ for every $i\in[s-1]$ and $\mathcal{Q}_{s-1}[\ell_{s-1}-r, \ell_{s-1}]$ is a permutation of $V(H_{s})$.
Furthermore, applying Lemma~\ref{lem6.5} to $H_s$ with $\pi_{1}=\mathcal{Q}_{s-1}[\ell_{s-1}-r, \ell_{s-1}]$, $\pi_{2}=\mf{y}^+$, we obtain a good walk, say $\mathcal{Q}_{s}$ with $|\mathcal{Q}_{s}|=:\ell_{s}\leq 65r^{3}$, such that $\mathcal{Q}_{s}[1,r+1]=\mathcal{Q}_{s-1}[\ell_{s-1}-r, \ell_{s-1}]$ and $\mathcal{Q}_{s}[\ell_{s}-r, \ell_{s}]=\mf{y}^+$.
Thus we can piece the walks $\mathcal{Q}_{1},\ldots,\mathcal{Q}_{s}$ together by identifying the $r+1$ coordinates from the ending of $\mathcal{Q}_{i}$ and the beginning of $\mathcal{Q}_{i+1}$ for all $i\in[s-1]$.
Then the resulting sequence $\mathcal{Q}$ is actually a good walk with head $\mf{x}$ and tail $\mf{y}$ as desired.
Moreover, it is easy to see that $|\mathcal{Q}|=\sum_{i\in[s]}\ell_i\leq 100sr^3\le 100r^5$ and Lemma \ref{lem6.5} guarantees that there is no lazy element among the last $20r+20$ elements of $\mathcal{Q}$. This completes the proof.
\end{proof}

\subsection{Proof of Lemma \ref{lem6.30} and Lemma \ref{lem6.31}}
In this subsection, we begin by proving Lemma \ref{lem6.31}. We then use Lemma \ref{lem6.31} to prove Lemma \ref{lem6.30}. Throughout the proofs, we will rely on the following trivial fact.

\begin{fac}[\cite{KS1996}]\label{fac5.10}
Given $\beta, \eps>0$, let $(A, B)$ be an $(\eps, \beta)$-regular pair, and $Y\subseteq B$ with $|Y|\geq \eps |B|$.
Then there exists a subset $A'\< A$ with $|A'|\leq \eps |A|$ such that every vertex $v$ in $A\backslash A'$ has $|N(v)\cap Y|\geq (\beta-\eps)|Y|$.
\end{fac}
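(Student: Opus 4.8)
The plan is to take $A'$ to be exactly the set of ``bad'' vertices and then use $\eps$-regularity to show it cannot be large. Concretely, I would set
\[
A' := \{v \in A : |N(v) \cap Y| < (\beta - \eps)|Y|\},
\]
so that every $v \in A \setminus A'$ automatically satisfies the claimed bound $|N(v) \cap Y| \ge (\beta - \eps)|Y|$. It then only remains to verify $|A'| \le \eps |A|$.

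Suppose for contradiction that $|A'| > \eps |A|$. Since also $|Y| \ge \eps |B|$ by hypothesis, the pair $(A', Y)$ satisfies the size thresholds in the definition of $\eps$-regularity of $(A,B)$, so $|d(A', Y) - d(A, B)| \le \eps$. As $(A, B)$ is $(\eps, \beta)$-regular we have $d(A, B) \ge \beta$, hence $d(A', Y) \ge \beta - \eps$. On the other hand, by the defining property of $A'$,
\[
e(A', Y) = \sum_{v \in A'} |N(v) \cap Y| < |A'| \cdot (\beta - \eps)|Y|,
\]
and therefore $d(A', Y) = e(A', Y)/(|A'||Y|) < \beta - \eps$, a contradiction. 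Thus $|A'| \le \eps |A|$, which completes the argument.

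I do not expect any real obstacle here: this is the standard fact that in a regular pair all but an $\eps$-fraction of the vertices on one side see approximately the global density towards any not-too-small subset of the other side, and the proof is a one-line contradiction with the definition of regularity. The only small points to keep in mind are that the two size conditions $|A'| \ge \eps|A|$ and $|Y| \ge \eps|B|$ are precisely what is needed to apply $\eps$-regularity, and that one should invoke $d(A,B) \ge \beta$ (the second half of $(\eps,\beta)$-regularity), rather than mere $\eps$-regularity, in order to phrase the conclusion in terms of $\beta - \eps$.
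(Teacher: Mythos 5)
Your proof is correct and is precisely the standard argument: define $A'$ as the set of vertices with too few neighbours in $Y$, and if $|A'|>\eps|A|$ then the pair $(A',Y)$ meets the size thresholds of $\eps$-regularity, forcing $d(A',Y)\ge\beta-\eps$, which contradicts the defining property of $A'$. The paper itself gives no proof of this fact (it is cited from \cite{KS1996} as a ``trivial fact''), and your argument is exactly the one-line derivation one finds in that reference and every standard treatment of the regularity lemma.
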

In the following, we prove Lemma \ref{lem6.31}.

\begin{proof}[Proof of Lemma \ref{lem6.31}]
Given $\beta,\eta>0$, and $r, k, \ell\in \mathbb{N}$, we choose $\frac{1}{m}\ll \eps \ll \beta,\eta,\frac{1}{r}, \frac{1}{\ell}$.
Let $G$ be a graph with an equipartition $V(G)=V_{1}\cup V_{2}\cup \cdots\cup V_{k}$, and $|V_{i}|=m$ for every $i\in [k]$.
Let $R$ be a graph with $V(R)=\{V_{1}, V_{2}, \dots, V_{k}\}$, and $V_{i}V_{j}\in E(R)$ if $(V_{i}, V_{j})$ is $(\eps, \beta)$-regular in $G$.
Let $\mathcal{Q}=(S_{1}, \dots, S_{\ell+r})$ be a good walk in $R$ without any lazy element and $T_i\<S_i$ be a set of size at least $\eta m$ for every $i\in[\ell+r]$.
It suffices to prove that for every $s\in [\ell]$, there exists an $r$-path say $v_{1}v_2 \dots v_{s}$ such that $v_{i}\in T_{i}$ for every $i\in [s]$ and furthermore $|N_{G}(v_{s-t(s, j)+1}, \dots, v_{s})\cap T_{s+j}|\geq \left(\frac{\beta}{2}\right)^{t(s, j)}|T_{s+j}|$ for every $j\in [r]$, where $t(s, j)=\min\{s,r-j+1\}$.
We shall prove this by induction on $s$.

The base case $s=1$ is trivial.
Recall that $\mathcal{Q}$ has no lazy element and thus $(S_{1}, S_{j+1})$ is $(\eps, \beta)$-regular for every $j\in [r]$.
Note that $|T_i|\ge \eta m> r\eps m=r\eps |S_{i}|$ for every $i\in [\ell+r]$.
By Fact \ref{fac5.10}, there exists a subset $S'_{1}\< S_{1}$ with $|S'_{1}|\leq r\eps |S_{1}|$ such that every vertex $v\in T_{1}\backslash S'_{1}$ has $|N_{G}(v)\cap T_{j+1}|\geq (\beta-\eps)|T_{j+1}|$ for every $j\in [r]$.
As $|T_1|\ge \eta m> r\eps m=r\eps |S_{1}|\geq |S_1'|$, we choose an arbitrary vertex $v_{1}$ in $T_{1}\backslash S'_{1}$.
Then $|N_{G}(v_1)\cap T_{j+1}|\geq (\beta-\eps)|T_{j+1}|\geq \frac{\beta}{2}|T_{j+1}|$ for every $j\in [r]$.

Next, we show that our claim holds for $s+1$ assuming it holds for $s\ge 1$.
The induction hypothesis implies that there exists a vertex set $\{v_{1}, \dots, v_{s}\}$ such that $v_i\in T_i$ for $i\in[s]$ and the set $T_{s+j}^*:=N_{G}(v_{s-t(s, j)+1}, \dots, v_{s})\cap T_{s+j}$ has $|T_{s+j}^*|\ge \left(\frac{\beta}{2}\right)^{t(s, j)}|T_{s+j}|\geq \varepsilon |S_{s+j}|$ for every $j\in [r]$.
Recall that $(S_{s+1}, S_{s+1+j})$ is $(\eps, \beta)$-regular for every $j\in [r]$.
By Fact \ref{fac5.10}, there exists a subset $S'_{s+1}\<S_{s+1}$ with $|S'_{s+1}|\leq r\eps |S_{s+1}|$ such that every vertex $v\in T^{\ast}_{s+1}\setminus S'_{s+1}$ satisfies that for every $j\in [r-1]$ \[|N_{G}(v)\cap T^*_{s+1+j}|\geq (\beta-\eps)|T^*_{s+1+j}|\geq \frac{\beta}{2}\left(\frac{\beta}{2}\right)^{t(s,j+1)}|T_{s+1+j}|=\left(\frac{\beta}{2}\right)^{t(s+1,j)}|T_{s+1+j}|\] and $|N_{G}(v)\cap T_{s+1+r}|\ge (\beta-\eps)|T_{s+1+r}|\geq \frac{\beta}{2}|T_{s+1+r}|=\left(\frac{\beta}{2}\right)^{t(s+1,r)}|T_{s+1+r}|$.
As $\frac{1}{m}\ll\eps\ll\beta,\eta,\frac{1}{r},\frac{1}{\ell}$ and $s\leq \ell$, it holds that \[|T^{*}_{s+1}\backslash (S'_{s+1}\cup\{v_1,v_2,\ldots,v_s\})|\geq \left(\frac{\beta}{2}\right)^{r}|T_{s+1}|-r\eps|S_{s+1}|-s\geq\left(\frac{\beta}{2}\right)^{r}\eta m-r\eps m-s>0.\]
In $T^{*}_{s+1}\backslash (S'_{s+1}\cup\{v_1,v_2,\ldots,v_s\})$, we choose an arbitrary vertex $v_{s+1}$.
Then as $t(s+1,j)=t(s,j+1)+1$, it follows that \[|N_{G}(v_{(s+1)-t(s+1,j)+1},\ldots,v_s,v_{s+1})\cap T_{s+1+j}|=|N_{G}(v_{s+1})\cap T^*_{s+1+j}|\ge\left(\frac{\beta}{2}\right)^{t(s+1,j)}|T_{s+1+j}|\] for every $j\in [r]$, where $T^{*}_{s+1+r}=T_{s+1+r}$.
This finishes the proof.
\end{proof}

In the following, we use Lemma \ref{lem6.31} to prove Lemma \ref{lem6.30}.
\begin{proof}[Proof of Lemma \ref{lem6.30}]
Given $r, k\in \mathbb{N}$ and $\beta,\eta>0$, we choose $\frac{1}{m}\ll \alpha\ll \eps\ll\beta, \eta,\frac{1}{r}, \frac{1}{k}$.
Let $G$ be a graph with an equipartition $V(G)=V_{1}\cup V_{2}\cup\cdots\cup V_{k}$, $\alpha(G)\leq \alpha |V(G)|$, and $|V_{i}|=m$ for every $i\in [k]$.
Let $R$ be a graph with $V(R)=\{V_{1}, V_{2}, \dots, V_{k}\}$, and $V_{i}V_{j}\in E(R)$ if $(V_{i}, V_{j})$ is $(\eps, \beta)$-regular in $G$.
Let $\mathcal{Q}=(S_{1}, S_{2}, \dots, S_{3r+2})$ be a good walk in $R$ with exactly one lazy element $S_{r+1}$ and $T_i\<S_i$ be a set of size at least $\eta m$ for every $i\in[3r+2]$ such that $T_{r+1}=T_{r+2}$.
We first consider the subwalk $(S_{1}, S_{2}, \dots, S_{2r})$. Recall that $S_{r+1}$ is the lazy element.
Applying Lemma \ref{lem6.31} with $\ell=r$, there exists an $r$-path, say $P=v_{1} \dots v_{r}$, such that $v_{i}\in S_{i}$ for every $i\in[r]$ and $|N_{G}(v_{j}, \dots, v_{r})\cap T_{r+j}|\geq \left(\frac{\beta}{2}\right)^{r}|T_{r+j}|$ for every $j\in [r]$.
Define \[T^{0}_{r+j}:=N_{G}(v_{j}, \dots, v_{r})\cap T_{r+j}~\text{for every}~j\in [r]\] and $T^{0}_{r+j}:=T_{r+j}$ for every $j\in [r+1, 2r+2]$.
To finish the proof, it suffices to prove the following claim.

\begin{claim}\label{cl5.11}
There exists a vertex set $\{v_{r+3}, \dots, v_{2r+2}\}$ disjoint from $\{v_1,\ldots,v_r\}$ such that $v_{i}\in T^{0}_{i}$ for every $i\in [r+3,2r+2]$ and $\{v_{r+3}, \dots, v_{2r+2}\}$ induces a clique.
Moreover, there exists a sequence of subsets $T^{1}_{r+1}:= N_{G}(v_{r+3}, \dots, v_{2r+2})\cap T^0_{r+1}$ and\[T^{1}_{j}:=N_{G}(v_{j-r}, \dots, v_{2r+2})\cap T^0_{j}~\text{for every}~j\in[2r+3, 3r+2]\] such that $|T^{1}_{j}|\geq \left(\frac{\beta}{2}\right)^{r}| T^0_{j}|$ for every $j\in[2r+3, 3r+2]\cup \{r+1\}$.
\end{claim}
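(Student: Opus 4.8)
The plan is to prove Claim~\ref{cl5.11} by a greedy embedding that re-runs the inductive argument behind Lemma~\ref{lem6.31}, but with one extra subset held under control at every step. First I would record the relevant structure of $R$. Since $S_{r+1}$ is the \emph{unique} lazy element of $\mathcal{Q}$ we have $S_{r+1}=S_{r+2}$, and no other repetition occurs inside a window of $r+1$ consecutive elements; inspecting the windows $(S_{r+1},S_{r+3},\dots,S_{2r+1})$, $(S_{r+2},S_{r+3},\dots,S_{2r+2})$ and $(S_{r+2+i},\dots,S_{2r+2+i})$ for $i\in[r]$ therefore shows that $\{S_{r+3},\dots,S_{2r+2}\}$ spans a $K_r$ in $R$, that $S_{r+1}$ is joined in $R$ to each of $S_{r+3},\dots,S_{2r+2}$, and that $S_{2r+2+i}$ is joined in $R$ to each of $S_{r+2+i},\dots,S_{2r+2}$. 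Consequently every relevant pair among these clusters is $(\eps,\beta)$-regular in $G$. I would also note that each set to be tracked, namely $T^{0}_{r+1}$ and $T^{0}_{2r+3},\dots,T^{0}_{3r+2}$, has size at least $(\beta/2)^{r}\eta m\ge\eps m$.

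Next I would choose $v_{r+3},v_{r+4},\dots,v_{2r+2}$ one at a time in this order, maintaining a family of ``live'' subsets that shrink by a bounded factor at each step: a set $Z\subseteq T^{0}_{r+1}$ tracking the common neighborhood of the chosen vertices into $S_{r+1}$; for each $i\in[r]$ a set $Z_{2r+2+i}\subseteq T^{0}_{2r+2+i}$ that becomes live when $v_{r+2+i}$ is chosen and tracks $N_{G}(v_{r+2+i},\dots)\cap T^{0}_{2r+2+i}$; and, exactly as in the proof of Lemma~\ref{lem6.31}, for each not-yet-used embedding cluster a subset equal to the common neighborhood of the vertices already chosen in this phase, so that the final $v_{r+3},\dots,v_{2r+2}$ form a clique. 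When $v_{r+2+t}$ is to be chosen, there are only $r+1$ live sets, each lying in a cluster $(\eps,\beta)$-regular with $S_{r+2+t}$ and each of size at least $(\beta/2)^{2r}\eta m\ge\eps m$ (it has been intersected at most $r$ times, each time multiplied by at least $\beta-\eps\ge\beta/2$); so Fact~\ref{fac5.10} applies to each live set and deletes at most $\eps m$ vertices of $S_{r+2+t}$, and since $(r+1)\eps m$ plus the fewer than $2r$ previously chosen vertices is smaller than $|T^{0}_{r+2+t}|\ge(\beta/2)^{r}\eta m$, a legal choice $v_{r+2+t}\in T^{0}_{r+2+t}\setminus\{v_{1},\dots,v_{r},v_{r+3},\dots,v_{r+1+t}\}$ remains; after choosing it we intersect every live set with $N_{G}(v_{r+2+t})$. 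After $r$ steps the vertices $v_{r+3},\dots,v_{2r+2}$ are pairwise adjacent and disjoint from $\{v_{1},\dots,v_{r}\}$, while $Z=T^{1}_{r+1}$ and $Z_{2r+2+i}=T^{1}_{2r+2+i}$ each have size at least $(\beta-\eps)^{r}\ge(\beta/2)^{r}$ times that of the corresponding $T^{0}$-set, which is what the claim asserts.

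I expect the only real subtlety to be that one cannot simply quote Lemma~\ref{lem6.31}: applied to the walk $(S_{r+3},\dots,S_{2r+2},S_{2r+3},\dots,S_{3r+2})$ it already yields the clique together with the required bounds on $T^{1}_{2r+3},\dots,T^{1}_{3r+2}$, but it says nothing about $N_{G}(v_{r+3},\dots,v_{2r+2})\cap T^{0}_{r+1}$, and this quantity cannot be recovered afterwards from a per-vertex typicality condition because a common neighborhood does not compose in that way; hence $T^{0}_{r+1}$ has to be carried through the same greedy process as the other sets. All the remaining verifications (sizes of live sets staying above $\eps m$, deleted sets staying a negligible fraction, and the arithmetic comparing $(\beta-\eps)^{r}$ with $(\beta/2)^{r}$) are routine under the hierarchy $\frac1m\ll\alpha\ll\eps\ll\beta,\eta,\frac1r,\frac1k$.
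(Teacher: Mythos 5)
Your proposal is correct and takes essentially the same route the paper indicates: re-run the greedy embedding from the proof of Lemma~\ref{lem6.31}, but carry the additional set $T^0_{r+1}$ as a live set that gets intersected with $N_G(v)$ at every step. You correctly identify the one non-trivial point (that quoting Lemma~\ref{lem6.31} as a black box would lose control of $N_G(v_{r+3},\dots,v_{2r+2})\cap T^0_{r+1}$), which is precisely the ``additional requirement'' the paper alludes to.
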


\begin{pr}
The proof of the claim follows from the same argument as in that of Lemma~\ref{lem6.31}, except an additional requirement on the choice of $v_{r+3},\ldots,v_{2r+2}$ that $|T^{1}_{r+1}|\geq \left(\frac{\beta}{2}\right)^{r}| T^0_{r+1}|$, and we omit it.
\end{pr}

By Claim \ref{cl5.11}, there exists a subset $T^{1}_{r+1}\< N_{G}(v_{1}, \dots, v_{r},v_{r+3}, \dots, v_{2r+2})$ with $|T^{1}_{r+1}|\geq \left(\frac{\beta}{2}\right)^{r}|T^0_{r+1}|\ge \left(\frac{\beta}{2}\right)^{2r}\eta m>\alpha km=\alpha|V(G)|$, as $\alpha\ll \beta,\eta,\frac{1}{r},\frac{1}{k}$.
Hence, $G[T^{1}_{r+1}]$ contains an edge, say $xy$.
Then the $r$-path $P=v_{1}\dots v_{r}xyv_{r+3}, \dots, v_{2r+2}$ is as desired.
\end{proof}

\bibliographystyle{abbrv}
\bibliography{ref}

\end{document}